\newtheorem{theorem}{Theorem}
\newtheorem{lemma}[theorem]{Lemma}
\newtheorem{remark}[theorem]{Remark}
\newcommand{\X}{\mathcal{X}}
\newcommand{\D}{\mathcal{D}}
\newcommand{\I}{\mathcal{I}}
\newcommand{\A}{\mathcal{A}}
\newcommand{\R}{\mathbb{R}}
\newcommand{\N}{\mathbb{N}}
\begin{document}
%
% paper title
% Titles are generally capitalized except for words such as a, an, and, as,
% at, but, by, for, in, nor, of, on, or, the, to and up, which are usually
% not capitalized unless they are the first or last word of the title.
% Linebreaks \\ can be used within to get better formatting as desired.
% Do not put math or special symbols in the title.
%\title{Some keywords for the title: \\Predictive/receding horizon; Output feedback boundary control;  $2\times 2$ quasilinear hyperbolic PDEs/systems; robustness;  characteristic line}
\title{%(Direct) Predictive Boundary Control of $2\times 2$  Quasilinear Hyperbolic PDEs (Systems) with a Robustness (Guarantee) Certificate\\
Boundary Feedback Control of $2\times2$ Quasilinear Hyperbolic Systems: Predictive Synthesis and Robustness Analysis
}
%Robust boundary control of $2\times 2$ quasilinear hyperbolic PDEs (systems) by (predictor) output feedback}
%
%
% author names and IEEE memberships
% note positions of commas and nonbreaking spaces ( ~ ) LaTeX will not break
% a structure at a ~ so this keeps an author's name from being broken across
% two lines.
% use \thanks{} to gain access to the first footnote area
% a separate \thanks must be used for each paragraph as LaTeX2e's \thanks
% was not built to handle multiple paragraphs
%

\author{Timm~Strecker,
       Ole~Morten~Aamo
        and~ Michael~Cantoni% <-this % stops a space
\thanks{Timm Strecker and Michael Cantoni are with the Department of Electrical and Electronic Engineering, The University of Melbourne, Australia (timm.strecker@unimelb.edu.au; cantoni@unimelb.edu.au). Ole Morten Aamo is with the Department of Engineering Cybernetics, Norwegian University of Science and Technology (NTNU), Norway (aamo@ntnu.no).}% <-this % stops a space
\thanks{This work was supported by the Australian Research
Council (LP160100666).}}

\maketitle

% As a general rule, do not put math, special symbols or citations
% in the abstract or keywords.
\begin{abstract}
We present a  predictive feedback control method for a class of quasilinear hyperbolic systems with one boundary control input.   Assuming exact model knowledge, convergence to the origin, or tracking at the uncontrolled boundary, are achieved in finite time. A robustness certificate is provided, showing that at least under more restrictive assumptions on the system coefficients, the control method has inherent robustness properties with respect to small errors in the model, measurements and control input.    Rigorous, although conservative conditions on the time derivative of the initial condition and on the design parameter controlling the convergence speed are given to ensure global existence of the solution for initial conditions with arbitrary infinity-norm.
\end{abstract}

% Note that keywords are not normally used for peerreview papers.
%\begin{IEEEkeywords}
%IEEE, IEEEtran, journal, \LaTeX, paper, template.
%\end{IEEEkeywords}

% For peer review papers, you can put extra information on the cover
% page as needed:
% \ifCLASSOPTIONpeerreview
% \begin{center} \bfseries EDICS Category: 3-BBND \end{center}
% \fi
%
% For peerreview papers, this IEEEtran command inserts a page break and
% creates the second title. It will be ignored for other modes.
\IEEEpeerreviewmaketitle

\section{Introduction}
We consider 1-d quasilinear hyperbolic systems of the form
%
%%
%%We consider the class of 1-d quasilinear hyperbolic systems
%\begin{align}
%u_t(x,t) + \lambda^u((u,v)(x,t))u_x(x,t) &= f^u((u,v)(x,t),x,t) \label{u_t}\\
%v_t(x,t) - \lambda^v((u,v)(x,t))v_x(x,t) &= f^v((u,v)(x,t),x,t) \label{v_t}
%\end{align}
%
%\begin{align}
%u_t(x,t) + \lambda^u(w(x,t),x,t)u_x(x,t) &= f^u(w(x,t),x,t) \label{u_t}\\
%v_t(x,t) - \lambda^v\left(w(x,t),x,t\right)v_x(x,t) &= f^v\left(w(x,t),x,t\right) \label{v_t}
%\end{align}
%
\begin{align}
w_t(x,t) = \Lambda(x,w(x,t))\,w_x(x,t) + F(x,w(x,t)), \label{w_t}
\end{align}
%
%\begin{align}
%\partial_t w(x,t) = \Lambda(x,w(x,t))\,\partial_x w(x,t) + F(x,w(x,t)) \label{w_t}
%\end{align}
%
%\begin{align}
%w_t(x,t) = \Lambda(w)w_x(x,t)  F(w) \label{w_t}
%\end{align}
%
where subscripts denote partial derivatives,  $x\in[0,1]$, $t\geq 0$ and
\begin{align}
w(x,t)&=\left(\begin{matrix} u(x,t)&v(x,t) \end{matrix} \right)^T,
\end{align}
with boundary conditions
 \begin{align}
u(0,t) &= g^u(v(0,t),t) \label{uBC}\\
v(1,t) & = U(t) + g^v(u(1,t),t)\label{vBC}
\end{align}
and initial condition
\begin{align}
w(\cdot,0)=w_0=\left( \begin{matrix}u_0&v_0 \end{matrix} \right)^T.  \label{uvIC}
%u(x,0) &= u_0(x) & v(x,0) &= v_0(x).  \label{uvIC}
\end{align}
Without loss of generality, we assume 
\begin{equation}
g^v(u(1,t),t)\equiv 0,
\end{equation}
otherwise we could adjust the control input as $U(t) = \tilde{U}(t)-g^v(u(1,t),t)$.
 The nonlinear functions $\Lambda$ and $F$ are of the form
\begin{align}
%\Lambda(x,w(x,t)) &=\operatorname{diag}\left(-\lambda^u(x,w(x,t)),\,\lambda^v(x,w(x,t))   \right), & \lambda^u,\,\lambda^v&>0 \\
%\Lambda(x,z) &=\operatorname{diag}\left(-\lambda^u(x,z),\,\lambda^v(x,z)   \right) \\
\Lambda(x,z) &=\left( \begin{matrix}-\lambda^u(x,z) & 0\\ 0& \lambda^v(x,z)    \end{matrix}  \right), & \lambda^u,\,\lambda^v&>0, \\
F(x,z) &=\left( \begin{matrix} f^u(x,z)& f^v(x,z)    \end{matrix}\right)^T,
\end{align}
where  $\lambda^u$ and $\lambda^v$ are assumed to be positive. The functions $f^u$, $f^v$, $g^u$ and $g^v$ are nonlinear functions of the state and space and $U$ is the control input to be designed.  Precise assumptions on the system coefficients are given in Section \ref{sec:assumptions}. 

Equations (\ref{w_t})-(\ref{uvIC}) are widely used to model 1-d transport phenomena such as gas or fluid flow through pipelines, open-channel flow, traffic flow, electrical transmission lines, and blood flow in arteries \cite{bastin2016book}. In many cases, sensors and actuators are restricted to the boundary of the spatial domain.

 We consider the problem of stabilizing  (\ref{w_t})-(\ref{uvIC}) at the origin or, alternatively, to satisfy a tracking objective at the uncontrolled boundary, using a sampled-time control law.

\subsection{Background}

The boundary control of systems of form (\ref{w_t})-(\ref{uvIC}) has been considered in several papers. In \cite{greenberg1984effect}, dissipative boundary conditions  are designed   that asymptotically stabilize the system. Similar static boundary feedback laws are developed in \cite{coron2007strict} by use of a control Lyapunov function. However, such static feeback laws work only for small in-domain coupling terms (represented by $F$ in (\ref{w_t})) \cite{bastin2016book} and do not  actively steer the system to  the origin in finite time. The exact, finite time boundary controllability and observability of 1-dimensional hyperbolic systems   is well established for both linear \cite{russell1978controllability} and quasilinear \cite{cirina1969boundary,li2003exact,li2008observability} systems.  A predictor-feedback control law for a  PDE-ODE cascade is presented in \cite{bekiaris2018compensation}. However, there is only one PDE-state and no source terms, which renders it equivalent to a system with input-dependent
input delay.   

The proof of controllability in \cite{li2003exact} is constructive,~i.e.,~it provides a method of computing the inputs that steer the system to the origin. However, it is an open-loop control law where the inputs are pre-computed for a long time interval  and then applied in an open-loop fashion. There is no direct way to update the control inputs based on measurements of the state as time proceeds. This makes performance and stability properties of the controlled system sensitive to disturbances and model  uncertainty.  Further, in case the origin is not reached exactly at the end of the control interval due to errors, even existence of a solution is no longer guaranteed. A similar method for tracking of a reference signal is presented in \cite{gugat2011flow}.

Feedback controllers  (utilizing either state or output measurements) that steer systems of form (\ref{w_t})-(\ref{uvIC}) to the origin in the theoretical minimum time are developed  in \cite{vazquez2011backstepping} for linear systems using the backstepping method,  and in \cite{strecker2017output} for semilinear systems using a predictive control method. These results have also been generalized to various broader system classes and bilateral boundary control, see e.g.  \cite{strecker2017twosided, strecker2017seriesinterconnections,strecker2019semilineargeneralheterodirectional} for the semilinear case and  \cite{aamo2013disturbance,dimeglio2013stabilization,hu2015control,
auriol2016two,deutscher2017finite} for linear backstepping controllers.  It is also shown in \cite{coron2013local} that quasilinear systems can be stabilized locally by the  linear backstepping controller that is obtained by linearizing the quasilinear system and applying the design from \cite{vazquez2011backstepping}. The controllability result \cite{li2003exact}  is also local, but only to ensure existence of the solution, not because  a linearization is applied.   There exist quasilinear, controllable systems for which the linear design does not work  (see Section \ref{sec:numerical example} for an example). 

\subsection{Contributions}

In this paper, we extend the methodology presented in \cite{strecker2017output}  from semilinear to  quasilinear systems.  The idea is to predict the states in the interior of the domain up to the time when they are reached by the control input, and then to solve the dynamics backwards along the characteristic lines to obtain the control inputs that are needed to reach pre-designed target states. The details are developed in Section \ref{sec: exact state feedback}, where the approach is illustrated in Figure \ref{fig:outline control}.

 The  challenge in quasilinear systems is that the transport speed $\Lambda$ and, thus, the characteristic lines of the system depend on the state and the control input.  This complicates the computation of the control inputs. Moreover, even if the states remain bounded, the solution can cease to exist when the gradient, which is governed by a PDE with quadratic right-hand side, escapes in finite time \cite{bressan2000hyperbolic}.  Therefore, besides steering the system to the desired state, the control law must also be designed to prevent a collision of  characteristic lines by keeping the gradients bounded.    The need to keep control of the gradients restricts the choice of the state space. We consider a  type of weak, Lipschitz-continuous  solutions called the broad solution  \cite{bressan2000hyperbolic}. While our design can be modified to classical $C^1$-solutions, as considered in \cite{cirina1969boundary,li2003exact},  discontinuous $L^{\infty}$-solutions as in the linear \cite{vazquez2011backstepping} and semilinear \cite{strecker2017output} cases are not possible in quasilinear systems. 

We establish rigorous, although conservative, bounds on the growth of both the state and its derivatives. These form the basis for proving existence and predictability of the trajectories until the time they are affected by the control input, and for establishing a sufficient condition for global (in time) existence of the closed-loop solution. By making a clear distinction between the evolution of the state and its  derivative, we can allow initial conditions with arbitrary infinity-norm and only need to assume small time-derivatives, instead of requiring small $C^1$-norm as in \cite{li2003exact,cirina1969boundary}. This idea was first explored in \cite{gugat2003global}, but only for a particular conservation law ($F\equiv 0$) which simplified the derivations significantly. 

Our condition for global (in time) existence of the closed-loop trajectory allows us to prove that at least under more restrictive assumptions, the proposed feedback control law has inherent  robustness properties. This result, which to our knowledge is the first of its kind for this type of system, certifies that for small uncertainties, the trajectories converge to an arbitrarily small ball around the origin. Importantly, the solution does not cease to exist in finite time. This is a significant advancement compared to the results in \cite{cirina1969boundary,li2003exact}, which are based on semi-global solutions on bounded time-horizons (i.e., the solution exists up to a finite, given time; see e.g.~\cite{cirina1970semiglobalsolutions,li2001semiglobal}). These results rely on the solution reaching an equilibrium in finite time and then staying there, but cannot exclude blow-up of the solution  if the equilibrium is not reached exactly because of modelling errors.

This paper builds on the preliminary work reported in \cite{strecker2019quasilinearfirstorder}, which is restricted to a system with only one state and lacks the rigorous bounds on the allowable time derivatives and the robustness analysis.

\subsection{Outline}
 In Section \ref{sec: preliminaries}, we present technical preliminaries, including bounds on the growth of  the state and its derivatives and sufficient conditions for global existence of the solution. In Section \ref{sec: exact state feedback}, we present our control law that steers the system to the origin in finite time, assuming that the model parameters are  known exactly.  An alternative tracking problem at the uncontrolled boundary is considered in Section \ref{sec:tracking}. 
Our robustness result is shown in Section \ref{sec:robustness}.  Section \ref{sec conclusion} gives some concluding remarks  and several technical proofs are given in the appendix.

\section{Preliminaries}\label{sec: preliminaries}
\subsection{System assumptions and notation}\label{sec:assumptions}
For a Lipschitz continuous function $h:\,\A\rightarrow \R^n$,   $\A\subset\R^m$, $m,n\in \{1,2\}$, denote the minimal Lipschitz constant by
\begin{equation}
l(h) =\sup_{z_1\in\A,\,z_2\in\A} \frac{\|h(z_1)-h(z_2)\|_{\infty}}{\|z_1-z_2\|_{\infty}}.
\end{equation}
The set of  Lipschitz-continuous functions on $\A$ with infinity-norm bounded by some $c\in\R$ and minimal Lipschitz constant bounded by $c^{\prime}\in\R$ is denoted by
\begin{equation}
\begin{aligned}
\X_{\A}^{c,c^{\prime}}= &\left\{h:\,\mathcal{A}\rightarrow \R^n ~|~ h \text{ is Lipschitz continuous},\right.\\
& ~\left.\sup_{z\in\A} \|h(z)\|_{\infty} \leq c ,\, l(h) \leq c^{\prime} \right\}.
\end{aligned}
\end{equation}
Note that by Rademacher's theorem (see e.g.~\cite[Theorem 2.8]{bressan2000hyperbolic}), Lipschitz-continuous functions are differentiable almost everywhere. 
Following \cite[Chapter 3]{bressan2000hyperbolic}, we consider broad solutions of (\ref{w_t})-(\ref{uvIC}), which are functions in $\X_{[0,1]\times [0,T]}^{c,c^{\prime}}$ for $T\in\R\cup\{\infty\}$ and some appropriate $c,\,c^{\prime}>0$.
\begin{remark}
The broad solution of (\ref{w_t})-(\ref{uvIC}) is the solution of the integral equations that are obtained by integrating  along the characteristic lines. Although the differential equations (\ref{w_t}) might  not be satisfied in the classical sense, the solution satisfies the PDEs almost everywhere. Throughout the paper we readily switch between differential and integral form.
\end{remark}
%Denote the space of Lipschitz-continuous functions by 
%\begin{equation}
%\X = \{ h:\,[0,1]\times[0,\infty)\rightarrow \R^2, \, h \text{ Lipschitz continuous}\}.
%\end{equation}
%We sometimes also use the notation
%\begin{equation}
%\X_{A} = \{ h:\,\mathcal{A}\rightarrow \R^2, \, h \text{ Lipschitz continuous}\}
%\end{equation}
%for some $\A\subset\R^2$. Let $\|\cdot\|_{\infty}$ denote the standard infinity-norm of a vector and the essential-supremum norm of a time and/or space-varying signal.
%
%Following \cite[Chapter 3]{bressan2000hyperbolic}, we consider broad solutions of (\ref{w_t})-(\ref{uvIC}) defined over the state space $\X$.
%\begin{remark}
%Briefly speaking, the broad solution of (\ref{w_t})-(\ref{uvIC}) is the solution of the integral equations that are obtained by integrating (\ref{w_t})-(\ref{uvIC}) along its characteristic lines. Although the differential equations (\ref{w_t}) might  not be satisfied in the classical sense, by Rademacher's theorem the Lipschitz-continuous solution is differentiable and, hence, satisfies the PDEs, almost everywhere. Throughout the paper we switch between differential and integral form
%\end{remark}

The initial conditions $w_0$ and the control input $U$ are assumed to be Lipschitz  and satisfy the compatibility conditions
 \begin{align}
 u_0(0)&=g^u(v_0(0),0) \\
 U(t) &= \lim_{x\rightarrow1}v(x,t)  \label{compatibility}
\end{align}
for all $t\geq 0$.

%For given  $c>0$, $c^{\prime}>0$, define the bounded subset 
%\begin{equation}
%\X^{c,c^{\prime}} = \{ h \in\X:\, \|h\|_{\infty}\leq c,\, \|h_t\|_{\infty}\leq c^{\prime} \}. 
%\end{equation}
%\begin{equation}\begin{aligned}
%\X^c = \{ h:\,&[0,1]\times[0,\infty)\rightarrow \R^2,\\
%& h \text{ is Lipschitz continuous},\, \|h\|_{\infty}\leq c \}.
%\end{aligned}\end{equation}
We assume that $\Lambda$, $F$ and $g^u$ are uniformly Lipschitz-continuous with respect to their arguments. Let
\begin{align}
l_{\Lambda} & \operatorname*{=~ess\,sup~max}_{x\in[0,1],{ w=(u\,v)^T}\in\R^2\phantom{aa}} \{ \|\partial_u \Lambda(x, w) \|_{\infty},\|\partial_v \Lambda(x, w)\|_{\infty} \},  \label{bound Lambda_w}\\
l_{F} &= \operatorname*{ess\,sup}_{x\in[0,1], w\in\R^2}  \|\partial_w F(x, w) \|_{\infty},  \label{boundF_w}\\
l_{g^u} &= \operatorname*{ess\,sup}_{v\in\R,t\geq0}  |\partial_v g^u(v,t) |,  \label{bound g^u_v}
\end{align}
where $\partial$ denotes partial derivative, be the finite Lipschitz constants with respect to the state argument.
Consequently, $\Lambda$, $F$ and $g^u$ are bounded when evaluated along bounded trajectories. Moreover, we assume $\lambda^u$ and $\lambda^v$ are positive and $\Lambda^{-1}$ is bounded, globally on $[0,1]\times \R^2$, with
 \begin{align}
 l_{\Lambda^{-1}} &= \sup_{x\in[0,1],w\in\R^2}  \|\Lambda(x,w)^{-1} \|_{\infty}.  \label{bound Lambda-1}
% d^u_{max} &= \sup_{x\in[0,1],z\in\R^2}\frac{1}{\lambda^u(x,z)} \\
%   d^v_{max} &= \sup_{x\in[0,1],z\in\R^2}\frac{1}{\lambda^v(x,z)}.
 \end{align}
For stabilization, the coupling terms are assumed to satisfy
\begin{align}
F(x,0)&=0, & g^u(0,t)&=0  \label{F(0)=0}
\end{align}
for all $t\geq0$ and $x\in[0,1]$, i.e.,~the origin is assumed to be an equilibrium, although (\ref{F(0)=0}) need not be satisfied in the case that tracking is the objective (see also Section \ref{sec:tracking}).

%\subsection{Control objective - to do}
%\begin{itemize}
%\item stabilization of origin (using unilateral and bilateral control)
%\item tracking at one (unilateral control) or two (bilateral control) (need restriction on where these are located) locations in the domain 
%\item state estimation using unilateral and bilateral boundary sensing
%\begin{align}
%Y^u(t) &= u(1,t) & Y^v(t) &= v(0,t) 
%\end{align}
%\item solve problems ... using output feedback
%\end{itemize}

\subsection{Characteristic lines}
\begin{figure}[htbp!]\centering
\includegraphics[width=.6\columnwidth]{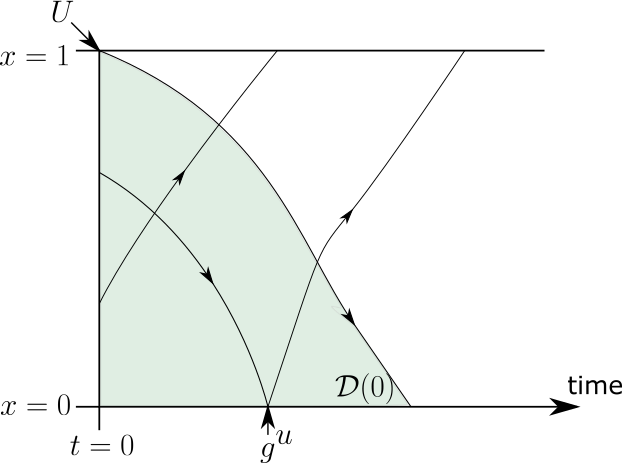}
\caption{Schematic of system (\ref{w_t})-(\ref{uvIC}), illustrating the characteristic lines of $u$ (positive ``upwards'' $x$-direction) and $v$ (negative ``downwards'' $x$-direction); the input $U$ entering at $x=1$; the boundary reflection term $g^u$ at $x=0$; and the determinate set $\D(0)$ (shaded in green). }
\label{fig:characteristic lines}
\end{figure}
The characteristic lines of system (\ref{w_t})-(\ref{uvIC}) are sketched in Figure \ref{fig:characteristic lines}.
 The characteristic lines passing through some point $(x,t)$ can be defined by 
\begin{align}
\begin{array}{rl}
\frac{d}{ds}\xi^u(x,t;s) &= \lambda^u\left(\xi^u(x,t;s),w(\xi^u(x,t;s),s)\right),  \\
 \xi^u(x,t;t)&=x,
\end{array}\label{xi^u}\\
\begin{array}{rl}
\frac{d}{ds}\xi^v(x,t;s) &= -\lambda^v\left(\xi^v(x,t;s),w(\xi^v(x,t;s),s)\right) , \\
 \xi^v(x,t;t)&=x,
\end{array}\label{xi^v}
\end{align}
where $\xi^u$ and $\xi^v$ are the locations that are reached at time $s$.
With $w\in\X^{c,c^{\prime}}_{[0,1]\times[0,T]}$ for some $c$, $c^{\prime}$ and $T\in\R\cup\{\infty\}$, the right-hand sides of (\ref{xi^u})-(\ref{xi^v}) are uniformly Lipschitz continuous in $\xi^u$ and $\xi^v$, respectively. Therefore, they have a unique solution that can be extended from $(x,t)$ until they reach the boundary of the domain $[0,1]\times[0,T]$.

For later use, also consider  the following alternative parameterization of the characteristic lines
\begin{align}
\tau^u(t;x) &= t+\int_0^x\frac{1}{\lambda^u(\xi,w(\xi,\tau^u(t;\xi))}d\xi \label{phi^u},\\
\tau^v(t;x) &= t+\int_x^1\frac{1}{\lambda^v(\xi,w(\xi,\tau^v(t;\xi))}d\xi, \label{phi^v} ,
\end{align}
where $\tau^u$ and $\tau^v$ are the times at which the characteristic lines starting at time $t$ and the spatial boundary $x=0$ and $x=1$, respectively, reach the location $x\in[0,1]$.

The dynamics of $u$ along $(\xi^u(x,t;s),s)$ and  $v$ along $(\xi^v(x,t;s),s)$, respectively, reduce to an ODE. For any $x,t\in[0,1]\times[0,\infty)$, using (\ref{xi^u})-(\ref{xi^v}) it follows that
\begin{align}
\frac{d}{ds}u(\xi^u(x,t;s),s) &= f^u(\xi^u(x,t;s),w(\xi^u(x,t;s),s)), \label{u_s}\\
\frac{d}{ds}v(\xi^v(x,t;s),s) &= f^v(\xi^v(x,t;s),w(\xi^v(x,t;s),s)).  \label{v_s}
\end{align}

As the second derivatives of the state are not well-defined for Lipschitz solutions, the dynamics of the time derivatives $u_t$ and $v_t$ need to be expressed as integral equations. 
Differentiating (\ref{w_t}) with respect to $t$, substituting the terms $u_x$ and $v_x$ (but not $u_{xt}$ and $v_{xt}$) by solving (\ref{w_t}) for $w_x$, and integrating along the characteristic lines, yields
\begin{align}
u_t(x,t) = & u_t^0(x,t) + \int_{t^u_0}^t \left[c_1\,(u_t)^2 + c_2\,u_t\, v_t \right. \nonumber \\
&\left. \quad + c_3\,u_t  + c_4\,v_t   \right](\xi^u(x,t;s),s)  \, ds\label{u_ts},\\
v_t(x,t) = & v_t^0(x,t) + \int_{t^v_0}^t \left[c_5\,u_t\, v_t  + c_6\,(v_t)^2 \right.    \nonumber \\
 &\left. \quad + c_7\,u_t  + c_8\,v_t   \right](\xi^v(x,t;s),s)  \, ds\label{v_ts},
\end{align}
%\begin{align}
%u_t(x,t) = & u_t^0(x,t) + \int_{t^u_0}^t \left[\frac{\partial_u\lambda^u}{\lambda^u}(u_t)^2 + \frac{\partial_v\lambda^u}{\lambda^u}u_t v_t \right.    \nonumber \\
% &+\left(\partial_uf^u- \frac{\partial_u\lambda^u}{\lambda^u}f^u\right)u_t  \nonumber \\
% &\left. +\left(\partial_vf^u- \frac{\partial_v\lambda^u}{\lambda^u}f^u\right)v_t   \right](\xi^u(x,t;s),s)  \, ds\label{u_ts}\\
%v_t(x,t) = & v_t^0(x,t) + \int_{t^v_0}^t \left[\frac{\partial_u\lambda^v}{\lambda^v}u_t v_t  + \frac{\partial_v\lambda^v}{\lambda^v}(v_t)^2 \right.    \nonumber \\
% &+\left(\partial_uf^v- \frac{\partial_u\lambda^v}{\lambda^v}f^v\right)u_t  \nonumber \\
% &\left. +\left(\partial_vf^v- \frac{\partial_v\lambda^v}{\lambda^v}f^v\right)v_t   \right](\xi^v(x,t;s),s)  \, ds\label{v_ts}
%\end{align}
where the functions $c_1$ through $c_8$, the initial/boundary data $u_t^0$ and $v_t^0$, and the times $t_0^u$ and $t_0^v$  are given in Appendix \ref{appendix A}. The shortened notation in the integrands means that all states are evaluated at $(\xi^u(x,t;s),s) $ and $(\xi^v(x,t;s),s) $, respectively, and $c_1$ through $c_8$ are evaluated at $\left(\xi^{u/v}(x,t;s),w(\xi^{u/v}(x,t;s),s) \right)$. Note that the right-hand sides of (\ref{u_ts})-(\ref{v_ts}) are quadratic in $w_t$. Therefore, the derivatives can blow up in finite time even if the state remains bounded.

\subsection{Well-posedness}
For existence of the solution in $\X^{c,c^{\prime}}_{[0,1]\times[0,T]}$ for any $T\in\R\cup\{\infty\}$, we need to ensure uniform boundedness of the Lipschitz constant $l(w)$ on  $[0,1]\times[0,T]$, which is implied whenever both partial derivatives $w_x$ and $w_t$ remain uniformly bounded.  For the spatial gradient $w_x$,
we have 
\begin{equation}
w_x(x,t) = \left(\Lambda(x,w(x,t))\right)^{-1}\left(w_t(x,t)-F(x,w(x,t)) \right) \label{w_x}.
\end{equation}
In view of the basic assumptions made in Section \ref{sec:assumptions}, (\ref{w_x}) implies that for bounded trajectories, the gradient $w_x$ is bounded if and only if $w_t$ is bounded. Thus, we can control the size of the gradient, $w_x$, via the size of state $w$, which is governed by (\ref{u_s})-(\ref{v_s}), and the  size of the time derivative $w_t$, which is governed by (\ref{u_ts})-(\ref{v_ts}).

In order to bound the growth of $w_t$, we will exploit the following  lemma  (Lemma 2.\Romannum{3} in \cite{cirina1970semiglobalsolutions}). It states, in a fashion similar to the classical Gronwall inequality,  that an integral inequality can be bounded by the solution of the corresponding equality.
\begin{lemma} \label{lemma bound quadratic}
Fix $T>0$ and let $\alpha$ be a real-valued function on $[0,T]$. Assume  there exists  $\gamma>0$ such that
\begin{align}
%|\dot{\alpha}(t)|&\leq\gamma\left(\alpha^2(t)+|\alpha(t)| \right), & |\alpha(0)|&\leq e^{-\gamma T}. \label{eq bound quadratic}
|\alpha(0)|&\leq e^{-\gamma T},\label{eq bound quadratic IC} \\
|\alpha(t)|&\leq |\alpha(0)| + \int_0^t \gamma\times\left(|\alpha(s)|^2+|\alpha(s)| \right)ds \label{eq bound quadratic}
\end{align}
for all $t\in[0,T]$. Then $\alpha(t)$ is bounded by 
\begin{equation}
|\alpha(t)|\leq \frac{|\alpha(0)|}{-|\alpha(0)|+(|\alpha(0)|+1)e^{-\gamma t}}\leq |\alpha(0)|e^{2\gamma T}.  \label{bound quadratic}
\end{equation}
\end{lemma}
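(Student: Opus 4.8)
The plan is to compare $|\alpha|$ with the solution of the associated scalar Riccati-type ODE and to solve that ODE explicitly. Set $y_0 := |\alpha(0)|$ (the degenerate case $y_0=0$ forces $\alpha\equiv 0$ by a standard Gr\"onwall argument, so assume $y_0>0$) and let $y$ solve $\dot y = \gamma(y^2+y)$, $y(0)=y_0$. This equation separates: since $\frac{d}{dt}\ln\frac{y}{y+1}=\dot y\left(\frac1y-\frac1{y+1}\right)=\gamma$, integration gives $\frac{y(t)}{y(t)+1}=\frac{y_0}{y_0+1}e^{\gamma t}$, hence
\[
y(t) = \frac{y_0}{(y_0+1)e^{-\gamma t}-y_0},
\]
which is exactly the middle expression in (\ref{bound quadratic}). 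The hypothesis (\ref{eq bound quadratic IC}) is what guarantees $y$ does not blow up on $[0,T]$: the denominator is positive iff $e^{\gamma t}<1+1/y_0$, and since $y_0\le e^{-\gamma T}$ one has $1+1/y_0\ge 1+e^{\gamma T}>e^{\gamma t}$ for all $t\in[0,T]$. More quantitatively, using $e^{-\gamma t}\ge e^{-\gamma T}$ and then $y_0\le e^{-\gamma T}$,
\[
(y_0+1)e^{-\gamma t}-y_0 \;\ge\; e^{-\gamma T}-y_0\bigl(1-e^{-\gamma T}\bigr) \;\ge\; e^{-\gamma T}-e^{-\gamma T}\bigl(1-e^{-\gamma T}\bigr) \;=\; e^{-2\gamma T},
\]
which gives the second inequality $y(t)\le y_0 e^{2\gamma T}$ in (\ref{bound quadratic}).

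It then remains to show $|\alpha(t)|\le y(t)$ on $[0,T]$. For this I would introduce $\beta(t):=y_0+\int_0^t\gamma\bigl(|\alpha(s)|^2+|\alpha(s)|\bigr)ds$, so that (\ref{eq bound quadratic}) reads $|\alpha(t)|\le\beta(t)$. Since $\beta$ is absolutely continuous with $\beta\ge|\alpha|\ge 0$, and $r\mapsto\gamma(r^2+r)$ is nondecreasing on $[0,\infty)$, we get $\dot\beta(t)=\gamma(|\alpha(t)|^2+|\alpha(t)|)\le\gamma(\beta(t)^2+\beta(t))$ for a.e. $t$, with $\beta(0)=y(0)$. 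The standard comparison principle for scalar ODEs — applicable because $r\mapsto\gamma(r^2+r)$ is locally Lipschitz, so $y$ is the unique solution — then yields $\beta(t)\le y(t)$ for all $t$ in the interval on which $y$ exists, i.e. on all of $[0,T]$ by the previous paragraph. Combining the three estimates gives $|\alpha(t)|\le\beta(t)\le y(t)\le y_0 e^{2\gamma T}$, which is precisely (\ref{bound quadratic}).

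The computations are elementary; the one genuinely delicate point is the one flagged above, namely that the comparison solution $y$ must be shown to exist (not escape to $+\infty$) on the \emph{entire} interval $[0,T]$ \emph{before} the comparison conclusion can be invoked, and this is exactly where the smallness assumption (\ref{eq bound quadratic IC}) on $|\alpha(0)|$ is used. A secondary care point is the passage from the integral inequality (\ref{eq bound quadratic}) to the differential inequality for $\beta$; alternatively one may argue by contradiction using the first time at which the continuous functions $\beta$ and $y$ would cross, which avoids quoting a comparison theorem and makes the role of monotonicity of $r\mapsto\gamma(r^2+r)$ explicit.
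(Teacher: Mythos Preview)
Your proof is correct. The paper does not itself prove this lemma but quotes it as Lemma~2.III in \cite{cirina1970semiglobalsolutions}; your approach---solve the Riccati comparison ODE $\dot y=\gamma(y^2+y)$ explicitly and then pass from the integral inequality to a differential inequality for the majorant $\beta$ via the scalar comparison principle---is the standard one and supplies exactly the details the paper omits.
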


The following Theorem is related to Theorems 3.\Romannum{2} and 3.\Romannum{4} in \cite{cirina1970semiglobalsolutions}, although the assumptions there are more restrictive, and also to the local result in \cite[Theorem 4.2]{li2001semiglobal}.
\begin{theorem}[Semi-global solution]\label{thm semiglobal solution}
Fix $T>0$ and let $\tilde{c}=\max\{\|w_0\|_{\infty},\,\|U\|_{\infty}\}$. { Assume\footnote{ The bound $\lambda^{\mathrm{max}}$ is only used to obtain a simple bound on the number of boundary reflections on an interval of arbitrary length $T$, when introducing the concept of semi-global solutions. It is not needed for the remainder of the paper.} further that there exists some finite  $\lambda^{\mathrm{max}}>0$} such that $\sup_{x\in[0,1], w\in\R^2}\|\Lambda(x, w)\|_{\infty}\leq \lambda^{\mathrm{max}}$. There exist  constants $\tilde{c}^{\prime}>0$ (see (\ref{ctildeprime estimate}) for  a conservative estimate ) and $c^{\prime}>0$, such that if $\|w_t(\cdot,0)\|_{\infty}\leq \tilde{c}^{\prime}$, $\|U_t\|_{\infty}\leq \tilde{c}^{\prime}$ and $\|\partial_t g^u\|_{\infty}\leq \tilde{c}^{\prime}$, then (\ref{w_t})-(\ref{uvIC}) has a unique solution up to time $T$ in $\X^{c,c^{\prime}}_{[0,1]\times[0,T]}$, where  $c =  \max\{1, l_{g^u} \} \exp(l_F T)\,\tilde{c}$. 
\end{theorem}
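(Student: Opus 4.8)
The plan is to combine the two a-priori bounds that the preliminaries have set up --- the ODE bound (\ref{u_s})-(\ref{v_s}) for the state $w$ along characteristics, and the quadratic integral bound (\ref{u_ts})-(\ref{v_ts}) for the time derivative $w_t$ --- by a continuation (bootstrap) argument along successive boundary reflections. First I would fix $T>0$ and let $N$ be an upper bound on the number of characteristic segments needed to cover $[0,1]\times[0,T]$; since $\|\Lambda\|_\infty\le\lambda^{\mathrm{max}}$ and $\|\Lambda^{-1}\|_\infty\le l_{\Lambda^{-1}}$, each characteristic crosses the strip in time between $1/\lambda^{\mathrm{max}}$ and $l_{\Lambda^{-1}}$, so $N$ can be taken of order $\lambda^{\mathrm{max}}T$. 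The strategy is to show that on the existence interval the state obeys $\|w(\cdot,t)\|_\infty\le c$ with $c=\max\{1,l_{g^u}\}\exp(l_F T)\tilde c$, and simultaneously $\|w_t(\cdot,t)\|_\infty$ stays below a constant $c'$; as long as both remain bounded, (\ref{w_x}) forces $w_x$ to stay bounded, hence $l(w)$ is uniformly bounded, and the standard fixed-point / method-of-characteristics local existence result (Bressan, Chapter 3) can be iterated to reach time $T$.

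The state bound is the easy half: along a $u$-characteristic, (\ref{u_s}) and $f^u(x,0)=0$ (or rather $F(x,0)=0$) give $|u|'\le l_F\|w\|_\infty$, and similarly for $v$; at the boundary $x=0$ the reflection $u(0,t)=g^u(v(0,t),t)$ with $g^u(0,t)=0$ contributes the factor $l_{g^u}$, while at $x=1$ the input contributes $\|U\|_\infty\le\tilde c$. Tracking the worst-case amplification over the at-most-$N$ segments and using that each segment takes time at most $l_{\Lambda^{-1}}$ (so the total travelled time is at most $T$) yields precisely $\|w\|_\infty\le\max\{1,l_{g^u}\}e^{l_F T}\tilde c=c$; this also pins down the Lipschitz constants $l_\Lambda,l_F,l_{g^u}$ of the coefficients evaluated along the trajectory, which is what the bound on $w_x$ in (\ref{w_x}) needs.

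The derivative bound is where the real work is, and it is what forces the smallness hypotheses on $\|w_t(\cdot,0)\|_\infty$, $\|U_t\|_\infty$ and $\|\partial_t g^u\|_\infty$. On each characteristic segment I would apply Lemma \ref{lemma bound quadratic} to $\alpha(t)=\|w_t(\cdot,t)\|_\infty$ (more carefully, to the running supremum of $|u_t|$ and $|v_t|$ over the relevant characteristics): the coefficients $c_1,\dots,c_8$ in (\ref{u_ts})-(\ref{v_ts}), being built from the Lipschitz constants $l_\Lambda,l_{\Lambda^{-1}},l_F$ of the now-bounded trajectory, are bounded by a constant $\gamma$ on $[0,1]\times[0,T]$, so the integral inequality (\ref{eq bound quadratic}) holds on each segment provided the seed value at the start of the segment is at most $e^{-\gamma\cdot(\text{segment length})}$. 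Propagating through reflections: at $x=0$ the new boundary value of $u_t$ is $\partial_v g^u\cdot v_t+\partial_t g^u$, contributing a factor $l_{g^u}$ plus the additive term $\|\partial_t g^u\|_\infty$; at $x=1$ we pick up $U_t$. The bound (\ref{bound quadratic}) says each segment at worst doubles (in the exponent) the size of $\|w_t\|_\infty$, so after $N$ segments the growth is at most a factor $e^{2\gamma T}$ times boundary/input contributions --- a fixed constant $c'$ depending only on $T$ and the structural constants. The key point is then to choose $\tilde c'$ small enough (this is the conservative estimate (\ref{ctildeprime estimate})) that the accumulated quantity never exceeds the threshold $e^{-\gamma/\lambda^{\mathrm{max}}}$ needed to re-apply the lemma on the next segment --- i.e. the smallness is propagated, not destroyed, across all $N$ reflections.

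The main obstacle, and the step I would spend the most care on, is making the bootstrap rigorous: one must run the state bound, the $w_x$ bound via (\ref{w_x}), the coefficient bounds $\gamma$, and the $w_t$ bound via Lemma \ref{lemma bound quadratic} as a \emph{coupled} continuity argument on a maximal existence interval $[0,T^\ast)$, showing $T^\ast\ge T$ by contradiction: if $T^\ast<T$ then on $[0,T^\ast)$ all of $\|w\|_\infty$, $\|w_t\|_\infty$, $\|w_x\|_\infty$ and hence $l(w)$ stay bounded by the constants just derived, so $w$ extends continuously to $T^\ast$ with these bounds, and the local existence theorem restarts the solution past $T^\ast$, contradicting maximality. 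A secondary technical nuisance is that, because $w_t$ at an interior point depends (through $u_t^0$, $v_t^0$, $t_0^u$, $t_0^v$ in Appendix \ref{appendix A}) on boundary data carried along characteristics that may themselves have been reflected, one has to organize the induction over reflection ``generations'' rather than over time slices; but the bound on $N$ makes this a finite induction, and the smallness of $\tilde c'$ is exactly the slack that keeps Lemma \ref{lemma bound quadratic}'s hypothesis (\ref{eq bound quadratic IC}) satisfied at the start of each generation.
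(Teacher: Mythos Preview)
Your proposal is correct and follows essentially the same route as the paper: local existence plus iteration, the state bound from (\ref{u_s})--(\ref{v_s}) with the reflection factor $\max\{1,l_{g^u}\}$, the $w_t$ bound by applying Lemma~\ref{lemma bound quadratic} on each inter-reflection segment with $\gamma$ taken as the supremum of the $c_i$ combinations (the paper's $\gamma_1$ in (\ref{gamma_1})), and the $w_x$ bound via (\ref{w_x}). The paper organizes the continuation slightly differently---it iterates the local result on fixed intervals $[0,T'],[T',2T'],\dots$ rather than arguing by contradiction on a maximal interval, and it partitions $[0,T]$ into subintervals of length $2/\lambda^{\mathrm{max}}$ (on each of which at most one reflection can occur) to count reflections---but these are cosmetic differences; your reflection-generation induction and bootstrap are equivalent in substance.
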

\begin{proof}
The proof follows the proofs of Theorem 3.\Romannum{2}  in \cite{cirina1970semiglobalsolutions} and Theorem 4.2 in \cite{li2001semiglobal}, which deal with classical $C^1$-solutions. However,  as in  \cite[Theorem 3.8]{bressan2000hyperbolic} one can show that if the initial and boundary values are Lipschitz continuous,  then the solution is also Lipschitz continuous. 

First, using Lipschitz-regularity of the initial condition and model data, local existence and uniqueness of the solution up to a potentially small time $T^{\prime}$ can be shown by following earlier results in \cite{douglis1952existence,li1985boundaryvalueproblems}. 

Exploiting that the right-hand side of (\ref{u_s})-(\ref{v_s}) is Lipschitz in the state, we have
\begin{align}
\left|\frac{d}{ds}u(\xi^u(x,t;s),s)\right| & \leq  l_F \|w(\cdot,s)\|_{\infty},   \label{u_s bound}\\
\left|\frac{d}{ds}v(\xi^v(x,t;s),s)\right| & \leq  l_F \|w(\cdot,s)\|_{\infty}.  \label{v_s bound}
\end{align}
Thus, the state in the interior of the domain  can grow at most exponentially with time. The boundary reflection term $g^u$ is also Lipschitz in the state and, because of the bound on $\|\Lambda(x, w)\|_{\infty}$, on each sub-interval of time of length $\frac{2}{\lambda^{\mathrm{max}}}$, the characteristic line on which  the supremum of the state over that interval is attained can have at most one boundary reflection, at which the state-norm is multiplied by at most the factor $\max\{1,l_{g^u}\}$.
 In summary,  if $w$ exists up to time $T$, then 
\begin{equation}
\|w(\cdot,t)\|_{\infty}\leq c =  \max\{1, l_{g^u} \}^{\left\lceil \frac{\lambda^{\mathrm{max}}\,T}{2} \right\rceil} e^{l_F T}\,\tilde{c}
\end{equation}
for all $t\leq T$. 

To bound the time-derivatives, with reference to (\ref{u_ts})-(\ref{v_ts}) let
\begin{align}
\gamma_1& = \sup_{x\in[0,1],\,\|z\|\leq c}\max\left\{|c_1(x,z)+c_2(x,z)|, |c_3(x,z)+\right. \nonumber\\
+c&_4(x,z)|,\left.|c_5(x,z)+c_6(x,z)|,|c_7(x,z)+c_8(x,z)| \right\}\label{gamma_1}  .
\end{align}
It follows that
\begin{align}
|u_t(x,t)| &\leq  |u_t^0(x,t)| + \int_{t^u_0}^t \gamma_1\times \left(\|w_t(\xi^u(x,t;s),s)\|_{\infty}^2 \right.    \nonumber \\
 &\left. + \|w_t(\xi^u(x,t;s),s)\|_{\infty} \right) ds,\\
|v_t(x,t)| & \leq |v_t^0(x,t)| + \int_{t^v_0}^t  \gamma_1\times \left(\|w_t(\xi^v(x,t;s),s)\|_{\infty}^2 \right.    \nonumber \\
 &\left. + \|w_t(\xi^v(x,t;s),s)\|_{\infty} \right) \, ds.
\end{align}
The initial/boundary terms $u_t^0(x,t)$ and $v_t^0(x,t)$ can be bounded via $\tilde{c}^{\prime}$ as follows. In case of $t^u_0(x,t)=0$ and $t^v_0(x,t)=0$, i.e., the characteristic line originates at $t=0$, then $|u_t^0(x,t)|\leq \tilde{c}^{\prime}$ and $|v_t^0(x,t)|\leq \tilde{c}^{\prime}$   because $\|w_t(\cdot,0)\|_{\infty}\leq \tilde{c}^{\prime}$ by  assumption. If $t^v_0(x,t)>0$, then $|v_t^0(x,t)| \leq \| U_t\|_{\infty}\leq \tilde{c}^{\prime}$  by assumption. If $t^u_0(x,t)>0$, the boundary condition at $x=0$ satisfies
\begin{equation}\begin{aligned}
|u_t^0&(x,t)|= |u_t(0,t^u_0(x,t))|  = |\partial_t g^u(v(0,t^u_0),t^u_0) \\
 +& \partial_v g^u(v(0,t^u_0),t^u_0)\,v_t(0,t^u_0)| \leq \| \partial_t g^u \|_{\infty} + l_{g^u} |v_t(0,t^u_0)|, 
\label{eq bound u_t(0)}
\end{aligned}\end{equation}
where $\partial_t g^u$ is also bounded by $ \tilde{c}^{\prime}$ by assumption. Note again that  for each subinterval of length $\frac{2}{\lambda^{\mathrm{max}}}$, the characteristic line on which  the supremum of $w_t$ over that subinterval is attained can have at most one boundary reflection. Therefore, it can happen at most $\left\lceil \frac{\lambda^{\mathrm{max}}\,T}{2} \right\rceil$ times that $\|w_t(\cdot,t)\|_{\infty}$ is multiplied by the factor $\max\{1,l_{g^u}\}$ and added to  $\|\partial_t g^u\|_{\infty}$. Setting $ \tilde{c}^{\prime}$ to the conservative value
\begin{equation}
 \tilde{c}^{\prime} = \frac{1}{\sum_{i=0}^{\left\lceil \frac{\lambda^{\mathrm{max}}\,T}{2} \right\rceil} \max\{1,l_{g^u}\}^i}\,e^{-2\gamma_1T}, \label{ctildeprime estimate}
\end{equation}
 we can apply Lemma \ref{lemma bound quadratic} for $\alpha(t)=\|w_t(\cdot,t)\|_{\infty}$ on each subinterval in between the times when $\|w_t(\cdot,t)\|_{\infty}$ can be amplified at the boundary $x=0$ (the 2 in the exponent of (\ref{ctildeprime estimate}), as opposed to (\ref{eq bound quadratic IC}), is needed to apply Lemma \ref{lemma bound quadratic} successively), to obtain 
\begin{equation}\begin{aligned}
\|w_t(\cdot,t)\|_{\infty} \leq & \max\{w_t(\cdot,t)_{\infty},\|U_t\|_{\infty},\|\partial_t g^u\|_{\infty}\}\\
& \times \sum_{i=0}^{\left\lceil \frac{\lambda^{\mathrm{max}}\,T}{2} \right\rceil} \max\{1,l_{g^u}\}^i\, e^{2\gamma_1T}
\end{aligned}\end{equation} 
for all $t\leq T$. Importantly, boundedness of  $\|w_t(\cdot,t)\|_{\infty}$ does not depend on $T^{\prime}$ but is valid uniformly  up to time $T$. The spatial gradient $w_x$, and thus the Lipschitz constant $l(w)$, can now be bounded uniformly up to time $T$  via (\ref{w_x}) and the previously established bounds on $w$ and $w_t$. As such, the local existence and uniqueness result can be applied iteratively on the intervals $[0,T^{\prime}]$, $[T^{\prime},2T^{\prime}]$, $\ldots$, up to time $T$.
\end{proof}
\begin{figure}[htbp!]\centering
\includegraphics[width=.7\columnwidth]{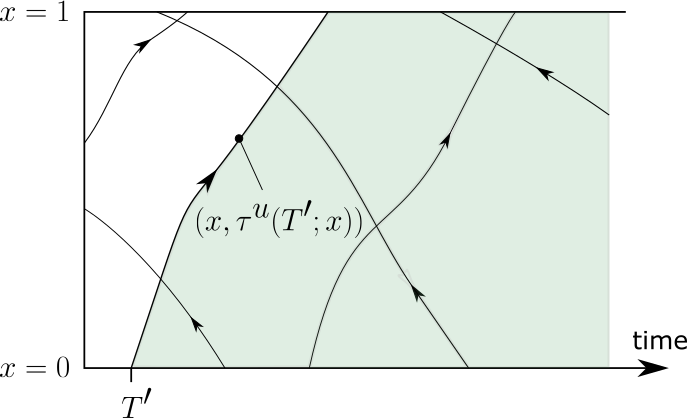}
\caption{Schematic of the characteristic lines of transformed system (\ref{w_x}), and the determinate set (\ref{det set 2})  for some $T^{\prime}\geq 0$ (shaded green).}
\label{fig:determinate set 2}
\end{figure}
Using the concept of semi-global solutions, we can obtain the following sufficient condition for global  (in time) existence of a solution to (\ref{w_t})-(\ref{uvIC}). Specifically, we can reverse the roles of $x$ and $t$ and look at (\ref{w_x}) as an initial-boundary value problem in the positive $x$-direction, with initial condition at $x=0$, a boundary condition for $u$ at $t=0$ and an open boundary for $v$ at $t=\infty$. See  Figure \ref{fig:determinate set 2}. 
For each  $(x,t)\in[0,1]\times[0,\infty)$, the dynamics along the characteristic lines passing through $(x,t)$ in the positive $x$-direction satisfy
% \begin{align}
%\frac{d}{dx}u(x,\tau^u(t;x)) & = \phantom{-}\frac{f^u\left(x,w(x,\tau^u(t;x)) \right) }{\lambda^u(x,w(x,\tau^u(t;x)))}, \label{u_x 1}\\
%\frac{d}{dx}v(x,\tau^v(t;x)) &=  -\frac{f^v\left(x,w(x,\tau^v(t;x)) \right) }{\lambda^v(x,w(x,\tau^v(t;x)))}.  \label{v_x 1}
%\end{align}
\begin{align}
\frac{d}{d\xi}u(\xi,\tau^{u,x}(x,t;\xi)) & = \phantom{-}\frac{f^u\left(x,w(x,t) \right) }{\lambda^u(x,w(x,t))}, \label{u_x 1}\\
\frac{d}{d\xi}v(\xi,\tau^{v,x}(x,t;\xi)) &=  -\frac{f^v\left(x,w(x,t) \right) }{\lambda^v(x,w(x,t))},  \label{v_x 1}
\end{align}
and the time derivatives satisfy the  integral equations
\begin{align}
u_t(x,t) = & u_t^{0,x}(x,t) + \int_{x^u_0(x,t)}^x  \frac{1}{\lambda^u} \left[{c}_1(u_t)^2 + {c}_2 u_t v_t \right. \nonumber \\
& \qquad \left. +{c}_3 u_t  +{c}_4 v_t   \right](\xi,{\tau}^{u,x}(x,t;\xi))  \, d\xi, \label{u_t integral2}\\
v_t(x,t) = & v_t^{0,x}(x,t) - \int_{0}^x  \frac{1}{\lambda^v} \left[{c}_5 u_t v_t + {c}_6 (v_t)^2 \right. \nonumber \\
& \qquad \left. + {c}_7 u_t  + {c}_8 v_t   \right](\xi,{\tau}^{v,x}(x,t;\xi))  \, d\xi, \label{v_t integral2}
\end{align}
where the characteristic lines are parameterized as
\begin{align}
\tau^{u,x}(x,t;\xi) &= t-\int_{\xi}^x\frac{1}{{\lambda}^u(y,w(y,\tau^{u,x}(x,t;y)))}dy, \label{phi^u start at 0} \\
\tau^{v,x}(x,t;\xi) &= t-\int_{\xi}^x\frac{1}{{\lambda}^v(y,w(y,\tau^{v,x}(x,t;y)))}dy, \label{phi^v start at 0} 
\end{align}
$(\xi^{u,x}_0(x,t),t^{u,x}_0(x,t))$ is defined as the intersection of $(\xi,\tau^{u,x}(x,t;\xi))$ with either $[0,1]\times 0$ or $0\times [0,\infty)$, and the initial values are 
\begin{align}
u_t^{0,x}(x,t) &=  \begin{cases} u_t(\xi^{u,x}_0(x,t),0) & \text{ if } t^{u,x}_0(x,t)=0  \\ 
											u_t(0,t^{u,x}_0(x,t)) & \text{ if } \xi^{u,x}_0(x,t) = 0  \end{cases},  \\
v_t^{0,x}(x,t) &= v_t(0,\tau^{v,x}(x,t;0)).
\end{align}
%\begin{theorem}[Sufficient condition for global solution] \label{thm: global existence condition}
%Fix $c^{\prime}>0$ and assume $w_0$ and $v(0,\cdot)$ are bounded by $\tilde{c}$. There exist constants $c= \kappa_1\,\tilde{c}$ and $\tilde{c}^{\prime}>0$ such that if  $\|u_t(\cdot,0)\|_{\infty}\leq \tilde{c}^{\prime}$ and    $\|v_t(0,\cdot)\|_{\infty}\leq \tilde{c}^{\prime}$ then (\ref{w_t})-(\ref{uvIC}) has a unique solution on $[0,1]\times [0,\infty)$ than remains in $\X^{c,c^{\prime}}$.
%\end{theorem}
\begin{theorem}[Sufficient condition for global solution] \label{thm: global existence condition}
Let $\tilde{c}$ be an upper bound on both $\|w_0\|_{\infty}$ and $\|v(0,\cdot)\|_{\infty}$. There exist constants $\tilde{c}^{\prime}>0$ (see (\ref{ctildeprime estimate 2}) for a conservative estimate) and $c^{\prime}>0$, such that if  $\|u_t(\cdot,0)\|_{\infty}\leq \tilde{c}^{\prime}$, $\|\partial_t g^u\|_{\infty}\leq \tilde{c}^{\prime}$ and    $\|v_t(0,\cdot)\|_{\infty}\leq \tilde{c}^{\prime}$, then (\ref{w_t})-(\ref{uvIC}) has a unique global solution in $\X^{\kappa_1\tilde{c},c^{\prime}}_{[0,1]\times [0,\infty)}$ where
\begin{equation}
\kappa_1 = \max\{1, l_{g^u} \} \exp(l_F l_{\Lambda^{-1}}). \label{kappa_1}
\end{equation}
\end{theorem}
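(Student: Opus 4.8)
The plan is to mirror the proof of Theorem~\ref{thm semiglobal solution}, but with the roles of the independent variables $x$ and $t$ interchanged, using the reformulation (\ref{u_x 1})--(\ref{phi^v start at 0}) in which (\ref{w_x}) is read as an initial--boundary value problem propagating in the positive $x$-direction. The decisive gain over Theorem~\ref{thm semiglobal solution} is that the ``time'' variable of this reformulated problem is $x\in[0,1]$, i.e.\ it lives on a fixed compact interval; consequently the a priori bounds will not degrade as the solution is continued in the original time $t$, which is exactly what upgrades a semi-global statement to a global one.

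First I would establish a uniform bound on the state. Along the $x$-characteristics the state obeys the ordinary differential equations (\ref{u_x 1})--(\ref{v_x 1}); since $F(x,0)=0$ we have $|f^{u}(x,w)|,|f^{v}(x,w)|\le l_F\|w\|_\infty$, and since $\Lambda$ is diagonal, $1/\lambda^{u},1/\lambda^{v}\le l_{\Lambda^{-1}}$, so each integrand is bounded by $l_F l_{\Lambda^{-1}}\|w\|_\infty$. The data entering a characteristic are either $v(0,\cdot)$, or $u(0,\cdot)=g^u(v(0,\cdot),\cdot)$ with $|u(0,t)|\le l_{g^u}|v(0,t)|$ (using $g^u(0,t)=0$), or the restriction $u_0$ of the initial datum, so in every case the ``initial value'' of a characteristic is at most $\max\{1,l_{g^u}\}\tilde c$. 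A Gronwall argument in $x$ then gives $\sup_{t\ge0}\|w(x,t)\|_\infty\le\max\{1,l_{g^u}\}\tilde c\,e^{l_F l_{\Lambda^{-1}}x}\le\kappa_1\tilde c$ for all $x\in[0,1]$, with $\kappa_1$ as in (\ref{kappa_1}). No factor growing with a time horizon appears, because $x\le1$ and because, in the $x$-direction, characteristics do not reflect back and forth between the spatial boundaries as they do in the original formulation, so the amplification by $\max\{1,l_{g^u}\}$ (which occurs only where a $u$-characteristic meets $x=0$) is incurred at most once.

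Next I would bound the time derivative $w_t$ via the quadratic integral equations (\ref{u_t integral2})--(\ref{v_t integral2}). Taking the state bound $\kappa_1\tilde c$ in place of $c$ and absorbing the weights $1/\lambda^{u/v}$ yields, in analogy with (\ref{gamma_1}), a finite constant $\gamma_2$ controlling the coefficients; Lemma~\ref{lemma bound quadratic} then applies with the ``time'' variable taken to be $x$. The initial values $u_t^{0,x}$, $v_t^{0,x}$ are controlled by $\|u_t(\cdot,0)\|_\infty$, $\|v_t(0,\cdot)\|_\infty$ and, via $u_t(0,t)=\partial_t g^u+\partial_v g^u\,v_t(0,t)$, by $\|\partial_t g^u\|_\infty+l_{g^u}\|v_t(0,\cdot)\|_\infty$, hence all by a fixed multiple of $\tilde c^{\prime}$. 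Choosing $\tilde c^{\prime}$ conservatively as in (\ref{ctildeprime estimate 2}) --- the analogue of (\ref{ctildeprime estimate}) with the time horizon replaced by $1$, $\gamma_1$ by $\gamma_2$, and only a single $g^u$-amplification --- makes the hypothesis $|\alpha(0)|\le e^{-\gamma_2}$ of Lemma~\ref{lemma bound quadratic} hold with $\alpha(x)=\|w_t(x,\cdot)\|_\infty$, yielding a bound on $\|w_t(x,\cdot)\|_\infty$ that is uniform over $x\in[0,1]$ and all $t\ge0$. With $w$ and $w_t$ uniformly bounded, (\ref{w_x}) bounds $w_x$, hence $l(w)$, uniformly on $[0,1]\times[0,\infty)$, giving the constant $c^{\prime}$.

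Finally, these a priori estimates are converted into existence and uniqueness exactly as in Theorem~\ref{thm semiglobal solution}: Lipschitz regularity of the data gives local (in $x$) existence and uniqueness of a Lipschitz-continuous solution on a slab $[x_0,x_0+\delta]\times[0,\infty)$, where $\delta$ depends only on the uniform bounds (cf.~\cite{bressan2000hyperbolic,douglis1952existence,li1985boundaryvalueproblems}); since the a priori bounds above do not depend on $\delta$ and hold uniformly in $t$, this local result is iterated finitely many times to reach $x=1$, producing a unique solution in $\X^{\kappa_1\tilde c,c^{\prime}}_{[0,1]\times[0,\infty)}$. I expect the part requiring most care to be this continuation step together with the verification that the bounds genuinely close without a circular dependence on the unknown Lipschitz constant --- and, within that, the observation that in the $x$-evolution the number of $g^u$-amplifications stays bounded uniformly in $t$, which is precisely what keeps $\gamma_2$ and $\tilde c^{\prime}$ fixed positive constants rather than degrading as $t\to\infty$.
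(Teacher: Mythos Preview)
Your proposal is correct and follows essentially the same approach as the paper: both reverse the roles of $x$ and $t$, bound the state via (\ref{u_x 1})--(\ref{v_x 1}) together with the $g^u$-bound at $x=0$ to obtain $\kappa_1\tilde c$, and then apply Lemma~\ref{lemma bound quadratic} to the quadratic integral equations (\ref{u_t integral2})--(\ref{v_t integral2}) with $x=1$ playing the role of the terminal time, choosing $\tilde c^{\prime}$ as in (\ref{ctildeprime estimate 2}). Your write-up is in fact more explicit than the paper's (which largely refers back to the proof of Theorem~\ref{thm semiglobal solution}) about the single $g^u$-amplification and the local-existence iteration, but the argument is the same.
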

\begin{proof}
Because of (\ref{uBC}) and the assumptions on $g^u$,  $\|w(0,t)\|_{\infty}\leq \max\{1,l_{g^u}\}|v(0,t)|$. Equations (\ref{u_x 1})-(\ref{v_x 1}) imply
% \begin{align}
%\frac{d}{dx}u(x,\tau^u(t;x)) &=  \frac{f^u(x,w(x,\tau^u(t;x)))}{\lambda^u(x,w(x,\tau^u(t;x)))} \label{u_x}\\
%\frac{d}{dx}v(x,\tau^v(t;x)) &= - \frac{f^v(x,w(x,\tau^v(t;x)))}{\lambda^v(x,w(x,\tau^v(t;x)))}.  \label{v_x}
%\end{align}
 \begin{align}
\left|\frac{d}{d\xi}u(\xi,\tau^{u,x}(x,t;\xi))\right| &\leq l_{\Lambda^{-1}}l_F \|w(x,\cdot)\|_{\infty}, \label{u_x bound}\\
\left|\frac{d}{d\xi}v(\xi,\tau^{v,x}(x,t;\xi)) \right| &\leq l_{\Lambda^{-1}}l_F \|w(x,\cdot)\|_{\infty},  \label{v_x bound}
\end{align}
which, combined with the bounds $\|u(\cdot,0)\|_{\infty}\leq \tilde{c}$ and $\|w(0,t)\|_{\infty}\leq \max\{1,l_{g^u}\}\tilde{c}$,  gives the bound $\kappa_1\tilde{c}$ on the state norm.

To bound the time-derivatives, let 
\begin{align}
\gamma_2& = \sup_{ x\in[0,1],\, \|z\|\leq \kappa_1\tilde{c}} \max \left\{ \left|\frac{c_1+c_2}{\lambda^u}\right|,\left|\frac{c_3+c_4}{\lambda^u}\right|, \nonumber \right. \\
& \hspace{2cm} \left.\left|\frac{c_5+c_6}{\lambda^v}\right|,\left|\frac{c_7+c_8}{\lambda^v}\right| \right\}, \label{gamma_2} \\
\intertext{where the arguments $(x,z)$ of $c_{\cdot}$ and $\lambda^{u/v}$ are omitted, and}
 \tilde{c}^{\prime}& = \frac{1}{2\,\max\{1,l_{g^u}\}}\,e^{-\gamma_2}. \label{ctildeprime estimate 2}
\end{align}
Differentiating (\ref{uBC}) w.r.t. $t$ yields $\|u_t(0,\cdot)\|_{\infty}\leq e^{-\gamma_2}$. Thus, $\|w_t(0,\cdot)\|_{\infty}\leq  e^{-\gamma_2}$,  $\|u_t(\cdot,0)\|_{\infty}\leq  e^{-\gamma_2}$ by assumption and, as in the proof of Theorem \ref{thm semiglobal solution}, we can apply Lemma \ref{lemma bound quadratic} to (\ref{u_t integral2})-(\ref{v_t integral2}) to exclude blow-up of the time-derivatives, with the boundary $x=1$ playing the role of the  terminal time $T$.
\end{proof}

\subsection{Determinate sets}
The control input $U(t)$ entering at $x=1$  propagates through the spatial domain $[0,1]$ with finite speed $\lambda^v$ along the characteristic line $(\xi^v(1,t;s),s)$. See  Figure \ref{fig:characteristic lines}. Therefore, the state in the interior of the domain is affected after a delay.  As such, the state in the interior of the domain is independent of the control input for some time, and can be predicted based on the state at time $t$ alone. A time-space domain on which the solution is independent of the control input has been called a determinate set \cite{li2000semi}, or also a domain of determinacy \cite{bressan2000hyperbolic}.

For $t\geq 0$, the maximal determinate set is given by
\begin{equation}
\D(t) = \left\{(x,s):\,x\in[0,1],\,s\in[t,\tau^v(t;x)]\right\}.  \label{eq: D}
\end{equation}
For $t=0$, the following theorem formalizes aspects of  the above remarks, i.e., that the solution on the maximum determinate set $\D(0)$, as well as the domain $\D(0)$ itself, is uniquely defined by the initial condition $w_0$ and independent of the control input $U(t)$ for $t> 0$ (the input $U(0)$ is uniquely related to $w_0$ by the compatibility condition (\ref{compatibility})). Compare also with  \cite[Lemma 3.1]{li2000semi} and  \cite[Theorem 2]{strecker2017output}.
\begin{theorem}\label{thm:determinate set 1}
With reference to (\ref{gamma_1}) and (\ref{kappa_1}), let $c= \kappa_1\,\|w_0\|_{\infty}$ and $\tilde{c}^{\prime}=\frac{1}{2\max\{1,l_{g^u}\}}e^{-2\gamma_1l_{\Lambda^{-1}}}$.  If $\|w_t(\cdot,0)\|_{\infty}\leq \tilde{c}^{\prime}$ and $\|\partial_t g^u\|_{\infty}\leq \tilde{c}^{\prime}$, then the Cauchy problem consisting of (\ref{w_t}), (\ref{uBC}) and (\ref{uvIC}) has a unique solution on the maximal determinate set $\mathcal{D}(0)$ and this solution lies in $\X^{c,c^{\prime}}_{\D(0)}$ for some $c^{\prime}>0$.  Moreover, $\tau^v(0;\cdot)$, $w(\cdot,\tau^v(0;\cdot))$ and $u_t(\cdot,\tau^v(0;\cdot))$, but not $v_t(\cdot,\tau^v(0;\cdot))$, are independent of the control input.  Moreover,
\begin{equation}\begin{aligned}
\operatorname*{ess\,sup}_{(x,t)\in\D(0)}\|w_t(x,t)\|_{\infty} \leq& (\max\{1,l_{g^u}\}\|w_t(\cdot,0)\|_{\infty} \\
&+ \|\partial_t g^u\|_{\infty} )\times e^{2\gamma_1l_{\Lambda^{-1}}}.
\end{aligned} \label{bound w_t on D(0)}\end{equation}
 \end{theorem}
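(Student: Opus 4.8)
The plan is to combine the arguments behind Theorems~\ref{thm semiglobal solution} and~\ref{thm: global existence condition}, localised to the determinate set $\D(0)$. First I would settle the geometry of $\D(0)$: by~(\ref{phi^v}) the map $x\mapsto\tau^v(0;x)$ is strictly decreasing with $\tau^v(0;0)\le l_{\Lambda^{-1}}$, and its graph is precisely the $v$-characteristic through $(1,0)$, i.e.\ the upper boundary of $\D(0)$. For any $(x,s)\in\D(0)$, since $\frac{d}{ds}\xi^u=\lambda^u>0$ in~(\ref{xi^u}) the $u$-characteristic through $(x,s)$, traced backward in time, is strictly decreasing in its spatial coordinate and, by the monotonicity of $\tau^v(0;\cdot)$, stays in $\D(0)$ until it meets $\{t=0\}$ or $\{x=0\}$; and, since distinct $v$-characteristics cannot cross, the $v$-characteristic through $(x,s)$ traced backward meets $\{t=0\}$ at some $x'<1$, the sole exception being the boundary characteristic itself, which runs back to the corner $(1,0)$. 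Consequently the integral form of~(\ref{w_t}), the relation~(\ref{uBC}) and the data~(\ref{uvIC}) close on $\D(0)$ without any reference to $U(t)$, $t>0$; this yields the independence assertions, $\tau^v(0;\cdot)$ in particular being fixed by $w$ along that single $v$-characteristic and hence by $w_0$ and $g^u$ alone.

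Assuming for the moment a Lipschitz solution on a subregion of $\D(0)$, I would bound the state exactly as in the proof of Theorem~\ref{thm semiglobal solution}: along any characteristic inside $\D(0)$ the state obeys~(\ref{u_s})--(\ref{v_s}), whose right-hand sides are bounded by $l_F\|w(\cdot,s)\|_\infty$; each such characteristic has total time-length at most $\tau^v(0;0)\le l_{\Lambda^{-1}}$ and undergoes at most one boundary interaction (at $x=0$, since within $\D(0)$ a $u$-characteristic never reaches $x=1$), at which $\|w\|_\infty$ is multiplied by at most $\max\{1,l_{g^u}\}$ because $u(0,t)=g^u(v(0,t),t)$ with $g^u(0,\cdot)=0$; hence $\|w\|_\infty\le\max\{1,l_{g^u}\}e^{l_Fl_{\Lambda^{-1}}}\|w_0\|_\infty=\kappa_1\|w_0\|_\infty=c$ throughout $\D(0)$. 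Combined with the $w_t$-bound below and~(\ref{w_x}), this bounds $w_x$, hence the Lipschitz constant $l(w)$, uniformly on $\D(0)$, so the local Lipschitz existence/uniqueness result used in the proof of Theorem~\ref{thm semiglobal solution} (via \cite[Theorem~3.8]{bressan2000hyperbolic}, \cite{douglis1952existence,li1985boundaryvalueproblems}) can be continued -- the state bound ruling out the gradient blow-up signalled by the quadratic right-hand sides of~(\ref{u_ts})--(\ref{v_ts}) -- to a unique solution on all of $\D(0)$, lying in $\X^{c,c^{\prime}}_{\D(0)}$ for some $c^{\prime}>0$.

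The remaining and principal task is the bound~(\ref{bound w_t on D(0)}). Using~(\ref{u_ts})--(\ref{v_ts}) with the coefficient bound $\gamma_1$ of~(\ref{gamma_1}), evaluated on $\|z\|\le c$ consistently with the state bound just obtained, for a.e.\ $(x,t)\in\D(0)$ one gets $\|w_t(x,t)\|_\infty\le|w_t^0(x,t)|+\int_{t_0}^t\gamma_1\big(\|w_t\|_\infty^2+\|w_t\|_\infty\big)ds$, where the starting term is bounded by $\|w_t(\cdot,0)\|_\infty$ if the characteristic reaches $\{t=0\}$ and, on differentiating~(\ref{uBC}), by $\|\partial_t g^u\|_\infty+l_{g^u}|v_t(0,\cdot)|$ if it first reaches $\{x=0\}$. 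I would then invoke Lemma~\ref{lemma bound quadratic}, as in the proofs of Theorems~\ref{thm semiglobal solution} and~\ref{thm: global existence condition}, applied to $\alpha(\cdot)=\operatorname{ess\,sup}\{\|w_t(x,\cdot)\|_\infty:(x,\cdot)\in\D(0)\}$ with $\gamma=\gamma_1$, on the at most two characteristic segments separated by the single possible interaction at $x=0$; its hypothesis $|\alpha(0)|\le e^{-\gamma_1 l_{\Lambda^{-1}}}$ holds because $\tilde{c}^{\prime}=\frac{1}{2\max\{1,l_{g^u}\}}e^{-2\gamma_1l_{\Lambda^{-1}}}$, the factor $2$ in the exponent leaving room for the two successive applications exactly as in the proof of Theorem~\ref{thm semiglobal solution}. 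Carrying the one amplification by $\max\{1,l_{g^u}\}$ and the additive term $\|\partial_t g^u\|_\infty$ through both applications over total time at most $l_{\Lambda^{-1}}$ then yields $\operatorname{ess\,sup}_{\D(0)}\|w_t\|_\infty\le\big(\max\{1,l_{g^u}\}\|w_t(\cdot,0)\|_\infty+\|\partial_t g^u\|_\infty\big)e^{2\gamma_1l_{\Lambda^{-1}}}$, which is~(\ref{bound w_t on D(0)}).

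I expect the main obstacle to be precisely this last step. Since $u_t$ and $v_t$ are coupled on the right-hand sides of~(\ref{u_ts})--(\ref{v_ts}), Lemma~\ref{lemma bound quadratic} cannot be applied to $u_t$ or $v_t$ individually, so one must work with $\|w_t\|_\infty$ throughout, argue carefully that on $\D(0)$ there is at most one amplifying boundary interaction, and isolate the single characteristic through $(1,0)$ -- a null set, hence invisible to the essential supremum in~(\ref{bound w_t on D(0)}), yet along which the starting value $v_t(1,0)$, equal to $U_t(0)$ by~(\ref{compatibility}), is not determined by $w_0$ and $g^u$. That same point also explains the asymmetry in the independence claims: every $u$-characteristic reaching the upper boundary of $\D(0)$ runs back into $\D(0)$, so $u_t(\cdot,\tau^v(0;\cdot))$ depends only on $w_0$ and $g^u$, whereas $v_t(\cdot,\tau^v(0;\cdot))$ is carried by exactly that boundary characteristic and is therefore not independent of the control input.
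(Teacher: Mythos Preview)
Your proposal is correct and takes essentially the same approach as the paper: existence and the $w_t$-bound are obtained by specialising the argument of Theorem~\ref{thm semiglobal solution} to the determinate set $\D(0)$ with terminal time $T\le l_{\Lambda^{-1}}$ and at most one boundary reflection, and the independence claims (including the $u_t$ versus $v_t$ asymmetry along the boundary characteristic) are argued exactly as you describe. The paper's own proof is terser---it simply points back to Theorem~\ref{thm semiglobal solution} and then handles the independence assertions---but the content is the same.
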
 
\begin{proof}
Existence of the solution and the bound on the time derivative follows as in Theorem \ref{thm semiglobal solution},  with the terminal time bounded as $T\leq l_{\Lambda^{-1}}$. Independence of the solution from $U$ follows by the fact that for all points $(x,t)\in\D(0) \setminus\{(\cdot,\tau^v(0;\cdot))\}$, the characteristic lines passing through $(x,t)$ originate from the initial condition at $t=0$ or the boundary condition for $u$ at $x=0$, but not from the boundary at $x=1$ where the input enters.   The state $w(\cdot,\tau^v(0;\cdot))$ is also uniquely defined by $w_0$ due to Lipschitz-continuity. The derivative $u_t(\cdot,\tau^v(0;\cdot))$ is also independent of the choice of $U$ because for points on this line, the entire integration path in (\ref{u_ts}), except for one point, lies in $\D(0)\setminus\{(\cdot,\tau^v(0;\cdot))\}$ (see also Figure \ref{fig:characteristic lines}, and Appendix A.3 in \cite{strecker2017output} for a similar situation). However,  $v_t(\cdot,\tau^v(0;\cdot))$ is determined by $U_t(0)$ because (\ref{v_ts}) is integrated along the line $(\cdot,\tau^v(0;\cdot))$.  
\end{proof}

Using the concept of determinate sets, we can  formulate the following condition for convergence to the origin.
%\begin{theorem}\label{thm:determinate set 2}
%Assume $v(0,\cdot)$ is Lipschitz and satisfies $\|v(0,\cdot)\|_{\infty}\leq \tilde{c}$ and $\|v_t(0,\cdot)\|_{\infty}\leq \tilde{c}^{\prime}$ with $\tilde{c}$ and $\tilde{c}^{\prime}$ such that as in Theorem \ref{thm: global existence condition}, the solution exists on $[0,1]\times[0,\infty)$.   There exists a constant  $\gamma=\max\{1, l_{g^u} \} \exp(l_F l_{\Lambda^{-1}})$ such that for all $T^{\prime}\geq0$, 
%\begin{equation}
%\sup_{x\in[0,1],\,t\geq \tau^u(T^{\prime};x)} \|w(x,t)\| \leq \gamma \times \sup_{t\geq T^{\prime}}  |v(0,t)|.
%\end{equation}
%In particular, if $v(0,t)=0$ for all $t\geq T^{\prime}$ then $w(x,t)\equiv 0$ for all $x\in[0,1]$, $t\geq \tau^u(T^{\prime};x)$.
%\end{theorem}
\begin{theorem}\label{thm:determinate set 2}
Assume $v(0,\cdot)\in\X^{\tilde{c},\tilde{c}^{\prime}}_{[0,\infty)}$ and  $w_0\in\X^{\tilde{c},\tilde{c}^{\prime}}_{[0,1]}$  with $\tilde{c}$ and $\tilde{c}^{\prime}$ such that, as in Theorem \ref{thm: global existence condition}, the solution exists on $[0,1]\times[0,\infty)$.   Then,  for all $T^{\prime}\geq0$, 
\begin{equation}
\sup_{x\in[0,1],\,t\geq \tau^u(T^{\prime};x)} \|w(x,t)\|_{\infty} \leq \gamma \times \sup_{t\geq T^{\prime}}  |v(0,t)|,
\end{equation}
where  $\gamma=\max\{1, l_{g^u} \} \exp(l_F l_{\Lambda^{-1}})$. 
In particular, if $v(0,t)=0$ for all $t\geq T^{\prime}$ then $w(x,t)\equiv 0$ for all $x\in[0,1]$, $t\geq \tau^u(T^{\prime};x)$.
\end{theorem}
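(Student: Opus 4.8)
The plan is to read the claim through the ``integration in the positive $x$-direction'' viewpoint used for Theorem~\ref{thm: global existence condition}, restricted to the region
\[
\mathcal{R} := \{(x,t) : x\in[0,1],\ t \ge \tau^u(T';x)\},
\]
whose lower boundary is exactly the $u$-characteristic emanating from $(0,T')$. First I would record the two geometric facts that let $\mathcal{R}$ play the role of a determinate set whose ``data'' live on $\{0\}\times[T',\infty)$. (i) By uniqueness of solutions of (\ref{xi^u}), distinct $u$-characteristics never cross, so the $u$-characteristic through any $(x,t)\in\mathcal{R}$, followed back to $x=0$, stays on or above $\tau^u(T';\cdot)$ and hence meets $x=0$ at a time $\ge T'$, with its entire trace in $\mathcal{R}$. (ii) Along the $v$-characteristic through any $(x,t)\in\mathcal{R}$, followed forward in time, the spatial coordinate decreases monotonically, so it reaches $x=0$ at some time $\ge t \ge \tau^u(T';x) \ge T'$, and at every intermediate spatial position $y\le x$ its time coordinate is $\ge t \ge \tau^u(T';y)$ (since $\tau^u(T';\cdot)$ is nondecreasing), so its trace also lies in $\mathcal{R}$. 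Consequently, for each $(x,t)\in\mathcal{R}$ the two characteristics joining $(x,t)$ to the spatial boundary remain in $\mathcal{R}$ and touch the boundary only on $\{0\}\times[T',\infty)$; the input boundary $x=1$ never enters.

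The next step is a Gronwall estimate. Put $V:=\sup_{s\ge T'}|v(0,s)|$ and $M(x):=\sup\{\,\|w(y,s)\|_\infty : y\in[0,x],\ (y,s)\in\mathcal{R}\,\}$, which is finite by the assumed global existence and nondecreasing in $x$. On $\{0\}\times[T',\infty)$ one has $|v(0,s)|\le V$ and, by (\ref{uBC}) with $g^u(0,\cdot)\equiv0$ from (\ref{F(0)=0}), $|u(0,s)|=|g^u(v(0,s),s)|\le l_{g^u}|v(0,s)|\le l_{g^u}V$. Integrating (\ref{u_x 1}) along the backward $u$-characteristic and (\ref{v_x 1}) along the forward $v$-characteristic from an arbitrary $(x,t)\in\mathcal{R}$, each a spatial integral over $y\in[0,x]$, and using $F(x,0)=0$ so that $|f^u(y,z)|,|f^v(y,z)|\le l_F\|z\|_\infty$, together with $\|\Lambda(\cdot,\cdot)^{-1}\|_\infty\le l_{\Lambda^{-1}}$, yields
\[
\|w(x,t)\|_\infty \le \max\{1,l_{g^u}\}\,V + l_F\,l_{\Lambda^{-1}}\int_0^x M(y)\,dy \qquad\text{for all }(x,t)\in\mathcal{R}.
\]
Since the right-hand side only grows when $y\le x$ is replaced by $x$ in the upper limit, taking the supremum over $(y,s)\in\mathcal{R}$ with $y\le x$ gives $M(x)\le\max\{1,l_{g^u}\}\,V+l_F\,l_{\Lambda^{-1}}\int_0^x M(y)\,dy$, and Gronwall's inequality gives $M(1)\le\max\{1,l_{g^u}\}\,e^{l_F l_{\Lambda^{-1}}}\,V=\gamma V$, which is precisely the stated bound. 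For the final assertion, $v(0,s)=0$ on $[T',\infty)$ forces $V=0$, hence $M\equiv0$ on $[0,1]$, i.e.\ $w(x,t)=0$ for all $x\in[0,1]$ and $t\ge\tau^u(T';x)$.

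The characteristic integrations, the Lipschitz bounds on $f^u$, $f^v$ and $\Lambda^{-1}$, and the Gronwall step are routine and mirror the proofs of Theorems~\ref{thm semiglobal solution} and \ref{thm: global existence condition}, so I would not dwell on them. The point that needs genuine care is fact~(ii): one must verify that the forward $v$-characteristic issuing from a point of $\mathcal{R}$ leaves the domain through $x=0$ (rather than through $x=1$ or $t=\infty$), does so at a time $\ge T'$, and has its entire trace inside $\mathcal{R}$; this domain-of-dependence bookkeeping is what lets $v(x,t)$ be estimated using only $v(0,\cdot)|_{[T',\infty)}$ and the values of $w$ on $\mathcal{R}$ itself, which is what closes the self-referential bound for $M$. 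Everything else is monotonicity and Gronwall.
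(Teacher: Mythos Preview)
Your proof is correct and follows essentially the same approach as the paper: you identify the region $\mathcal{R}$ as the determinate set of the Cauchy problem (\ref{w_x}) solved in the positive $x$-direction with data $w(0,\cdot)$ restricted to $[T',\infty)$, and then run the same Gronwall argument along characteristics as in the proof of Theorem~\ref{thm: global existence condition}. The paper's proof is a two-line sketch that invokes exactly this determinate-set observation and refers back to Theorem~\ref{thm: global existence condition}; your write-up simply makes the geometric bookkeeping (your facts (i) and (ii)) and the Gronwall step explicit.
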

\begin{proof}
 The domain (see Figure \ref{fig:determinate set 2})
\begin{equation}
 \{(x,t):\,x\in[0,1],\,t\geq \tau^u(T^{\prime};x)\}  \label{det set 2}
\end{equation} 
  is the maximal determinate set of the Cauchy problem (solved in the positive $x$-direction) consisting of (\ref{w_x}) and initial data $w(0,t)=\left(v(0,t),g^u(v(0,t)) \right)$ restricted to $t\geq T^{\prime}$. With this, the derivation of the bound follows as in the proof of Theorem \ref{thm: global existence condition}. 
 %
%  We have  $|u(0,t)|\leq \|\partial_v g^u\|_{\infty}|v(0,t)|$. We also have
% \begin{align}
%\frac{d}{dx}u(x,\tau^u(t;x)) &=  \frac{f^u(x,w(x,\tau^u(t;x)))}{\lambda^u(x,w(x,\tau^u(t;x)))} \label{u_x}\\
%\frac{d}{dx}v(x,\tau^v(t;x)) &= - \frac{f^v(x,w(x,\tau^v(t;x)))}{\lambda^v(x,w(x,\tau^v(t;x)))}.  \label{v_x}
%\end{align}
%  As in the proof of Theorem \ref{thm semiglobal solution}, the fact that the right-hand side of (\ref{u_x})-(\ref{v_x}) is Lipschitz in the state implies that   $\|w(x,t)\|$ can grow at most  exponentially in the positive $x$-direction with a rate $\tilde{\gamma}$  depending on  $\|\partial_u f\|_{\infty}$, $\|\partial_v f\|_{\infty}$, $\|\partial_u \Lambda\|_{\infty}$, $\|\partial_v \Lambda\|_{\infty}$ and $\|\Lambda^{-1}\|_{\infty}$. This gives the claim with $\gamma=\max\{1,\|g^u_w\|_{\infty}\}\times\exp(\tilde{\gamma})$.
\end{proof}

\section{Exact state feedback control} \label{sec: exact state feedback}
\subsection{Outline} \label{sec: control outline}
\begin{figure}[htbp!]\centering
\includegraphics[width=.9\columnwidth]{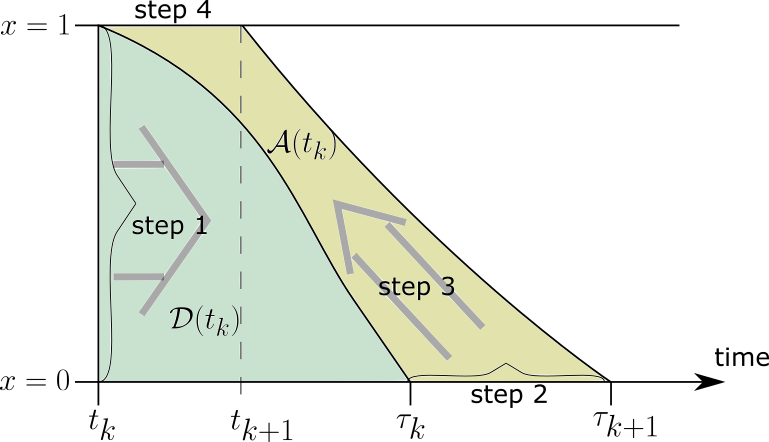}\\
\caption{Outline of the state-feedback control method at time $t=t_k$, $k\in\N$: 1) predict state on determinate set $\D(t_k)$ (shaded in green); 2)  design target boundary values for $v(0,t)$, $t\geq\tau_k$; 3) solve target dynamics   \emph{backwards} along the characteristic lines over $\mathcal{A}(t_k)$ (yellow); 4) set input to boundary value of target dynamics.  }
\label{fig:outline control}
\end{figure}
We present a sampled-time control law with sampling period $\theta>0$. At each sampling instance $t_k=k\,\theta$, $k\in\N$, the inputs are computed for the time interval $[t_k,t_{k+1}]$. The control law is based on  the following observations: 
\begin{enumerate}
%\item Due to compatibility condition (\ref{compatibility}), the control input at time $t$ is uniquely defined by the state at time $t$, $w(\cdot,t)$, and the control laws must not prescribe any other value. 
\item  As stated in Theorem \ref{thm:determinate set 1}, for any $t\geq 0$ the state on the determinate set $\D(t)$  is uniquely determined by the state at that time, $w(\cdot,t)$, and cannot be affected by control input $U(s)$ for $s\geq t$. 
\item By virtue of Theorems \ref{thm: global existence condition} and \ref{thm:determinate set 2}, one can ensure existence of a global solution and convergence to the origin, respectively, by ``slowly'' steering $v(0,\cdot)$, i.e. the boundary value at the uncontrolled boundary, to zero.
\end{enumerate} 
Because of 1), rather than trying to control the state at the current time, we design the inputs to control the future state on the characteristic lines along which the effect of the control inputs propagate through the spatial domain. Exploiting the  observations in 2), we first construct desirable boundary values for $v(0,\cdot)$, which also need to be compatible with the predictions over the determinate set $\D(t)$. Then, the actual inputs $U$ that achieve these desired boundary traces are computed by solving the system dynamics in the positive $x$-direction, i.e.,~\emph{backwards} relative to the propagation direction of the input, with the designed target boundary values for $v(0,\cdot)$ entering via the initial condition at $x=0$.  See  Figure \ref{fig:outline control}.

For sampling instant $t_k$, $k\in\N$,  define the time at which the control input $U(t_k)$ reaches the uncontrolled boundary at $x=0$ by
\begin{equation}
\tau_k = \tau^v(t_k;0).  \label{tau_k}
\end{equation}

\subsection{State prediction}
Let $\tilde{w}^{k}$ denote the state predictions at the $k$-th sampling instant, $k\in\N$.
Assuming no model uncertainty and exact state measurements and actuation, $\tilde{w}^{k}(\cdot,\cdot)$ satisfies
%\begin{align}
%\tilde{w}_t(t_k;x,t) &= \Lambda(x,\tilde{w}(t_k;x,t))\,\tilde{w}_x(t_k;x,t) \nonumber \\
%& \quad + F(x,\tilde{w}(t_k;x,t)), \label{wtilde_t}  \\
%\tilde{u}(t_k;0,t) &= g^u(\tilde{v}(t_k;0,t),t),  \label{utildeBC}\\
%\tilde{v}(t_k;1,t)  & = U(t),\label{vtildeBC} \\
%%\tilde{w}(t_k;\cdot,t_k) &= w(\cdot,t_k). \label{wtildeIC} 
%\tilde{w}(t_k;x,t_k) &= W_k(x), \label{wtildeIC} 
%\end{align}
\begin{align}
\tilde{w}_t^{k}(x,t) &= \Lambda(x,\tilde{w}^{k}(x,t))\,\tilde{w}_x^{k}(x,t)  + F(x,\tilde{w}^{k}(x,t)), \label{wtilde_t}  \\
\tilde{u}^{k}(0,t) &= g^u(\tilde{v}^{k}(0,t),t),  \label{utildeBC}\\
\tilde{v}^{k}(1,t)  & = U(t),\label{vtildeBC} \\
\tilde{w}^{k}(x,t_k) &= W_k(x), \label{wtildeIC} 
\end{align}
for $t\geq t_k$, $x\in[0,1]$, with the state measurement at time $t_k$ 
\begin{equation}
W_k = w(\cdot,t_k).   \label{measurement exact state feedback}
\end{equation}
Without model and measurement errors, the predicted trajectory $\tilde{w}$ is equal to the actual trajectory $w$. Therefore, $\tau^v(t_k,\cdot)$ and $\D(t_k)$ are also the characteristic lines and determinate set of $\tilde{w}$, respectively. Consequently, $ \tilde{w}(t_k;\cdot,\cdot)$ can be predicted over the whole maximum determinate set $\D(t_k)$ based on the state $w(\cdot,t_k)$ alone, i.e.~independently of the control input $U$, simply by solving (\ref{wtilde_t})-(\ref{wtildeIC}) over the domain $\D(t_k)$. %In particular, the prediction satisfies
%\begin{equation}
%\tilde{w}(t_k;0,t)=w(0,t)  \label{wtilde=w}
%\end{equation}
%for all $t\geq t_k$.
 Out of the prediction over all of $\D(t_k)$, the following  predictions are needed for implementation of the control law:
\begin{equation}
w(\cdot,t_k) \mapsto \left(\begin{matrix} \tilde{w}^k(\cdot,\tau^v(t_k;\cdot)) \\ \tilde{u}_t^k(\cdot,\tau^v(t_k;\cdot)) \\ \tau^v(t_k;\cdot)  \end{matrix} \right).  \label{prediction k}
\end{equation}

\subsection{Target boundary values at $x=0$}
%While (\ref{w_t}) and also (\ref{baru^v_t}) are hyperbolic PDEs with prescribed direction of propagation where the boundary condition must be specified at the inflow boundary to ensure well-posedness, (\ref{barv_x}) is an ODE with no propagation of direction. 

As indicated under point 2) in Section \ref{sec: control outline}, it would be straightforward to control  the system via the boundary value $v(0,\cdot)$. Following  ideas from \cite{strecker2017output}, we introduce the virtual control input $U^*$, which is the desired value for $v(0,\cdot)$. 
%That is, the closed-loop trajectory shall satisfy
%\begin{equation}
%v(0,t) = U^*(t)
%\end{equation}
%for all times where the contro $t\geq \tau^v(0;0)$.
By Theorem \ref{thm:determinate set 1}, $v(0,t)$ for $t<\tau_k$ is not affected by $U(t)$ for $t\geq t_k$. Instead, we can design the inputs $U(t)$ over the interval $t\in[t_k,t_{k+1}]$ to control the boundary values $v(0,t)$ over the interval $t\in[\tau_k,\tau_{k+1}]$. The following is required  of $U^*$: 
\begin{enumerate}
\item  it is a Lipschitz-continuous continuation of the prediction $\tilde{v}(t_k;0,t)$, $t\leq \tau_k$, to ensure $\X$-compatibility;
\item  it has small time-derivative (a.e.) to ensure existence of solution as in Theorem \ref{thm: global existence condition};
\item it converges to zero in finite or even minimum time; 
\item  the absolute value should not grow beyond some bound depending on the initial condition,  so that the solution remains bounded during transients. 
\end{enumerate} 
 One design that satisfies all four conditions and uses only data  that is available or predictable at time $t_k$ is
 \begin{equation}
U^{*,k}(t)= \begin{cases} v_0^k\cdot \left(1-\frac{\delta\cdot(t-\tau_k)}{|v_0^k|} \right) & t\in\left[\tau_k,\tau_k+\frac{|v_0^k|}{\delta}\right]\\
 0 &  t> \tau_k+\frac{|v_0^k|}{\delta}
 \end{cases},\label{U^*}
 \end{equation}
 where $v_0^k=\tilde{v}(t_k;0,\tau_k)$ and $\delta$ is the desired  bound on $|v_t(0,\cdot)|$.

\subsection{Construction of control inputs}

{
Define the future target state  on the characteristic line along which the control input propagates, and its derivative as
\begin{align}
\tilde{w}^{*,k}(x,t)&= \left(\tilde{u}^{*,k}(x,t)~\tilde{v}^{*,k}(x,t)\right) = \tilde{w}^k(x,\tau^v(t;x)),\\
%\tilde{w}^{k*}_t(x,t)&= \left(\tilde{u}^{k*}_t(x,t)~\tilde{v}^{k*}_t(x,t)\right) = \tilde{w}_t(t_k;x,\tau^v(t;x))\\
\tilde{w}^{*,k}_{\partial t}(x,t)&= \left(\tilde{u}^{*,k}_{\partial t}(x,t)~\tilde{v}^{*,k}_{\partial t}(x,t)\right) = \tilde{w}_t^k(x,\tau^v(t;x)).
\end{align}
 The subscript $\partial t$ notation introduced here reflects the relationship to the time-derivative, while emphasizing that, in general,  $\partial_t \tilde{w}^{ *,k} \neq \tilde{w}^{ *,k}_{\partial t}$  (see also (\ref{target 1})).

The control inputs which ensure   $v(0,t)=U^{*,k}(t)$ for all $t\in[\tau_k,\tau_{k+1}]$ can be constructed by solving the following system over the domain $[0,1]\times[t_k,t_{k+1}]$:
\begin{align}
\partial_t \tilde{u}^{*,k}(x,t) =& \partial_t \tau^v(t;x)\times \tilde{u}^{*,k}_{\partial t}(x,t), \label{target 1}\\
%\partial_t \tilde{v}^{*,k}(x,t) =& \partial_t \tau^v(t;x)\times \tilde{v}^{*,k}_{\partial t}(x,t),\label{target 2}\\
{ \partial_x \tilde{v}^{*,k}(x,t) =}&{ -\frac{f^v(x,\tilde{w}^{*,k}(x,t))}{\lambda^v(x,\tilde{w}^{*,k}(x,t))} ,}\label{target 2}\\
\partial_t \tilde{u}^{*,k}_{\partial t}(x,t) =& -\mu(x,\tilde{w}^{*,k}(x,t),t) \times\partial_x\tilde{u}^{*,k}_{\partial t}(x,t) \nonumber \\
&  + \nu(x,\tilde{w}^{*,k}(x,t),t) \times \left[c_1 \, (\tilde{u}^{*,k}_{\partial t})^2 \right. \nonumber \\
&\left. ~~~~+ c_2\,\tilde{u}^{*,k}_{\partial t}\,\tilde{v}^{*,k}_{\partial t}+c_3 \,\tilde{u}^{*,k}_{\partial t}+c_4\,\tilde{v}^{*,k}_{\partial t}\right],\label{target 3}\\
\partial_x \tilde{v}^{*,k}_{\partial t}(x,t)= & -\frac{1}{\lambda^v(x,\tilde{w}^{*,k}(x,t))} \times \left[c_5\, \tilde{u}^{*,k}_{\partial t}\,\tilde{v}^{*,k}_{\partial t}  \right.\nonumber \\
& ~~~~ \left.+ c_6\,(\tilde{v}^{*,k}_{\partial t})^2 + c_7\,\tilde{u}^{*,k}_{\partial t} + c_8\,\tilde{v}^{*,k}_{\partial t} \right] \label{target 4},\\
\partial_t \tau^v(t;x) =& 1 - \int_x^1\frac{\partial_u \lambda^v\times \tilde{u}^{*,k}_{\partial t} + \partial_v \lambda^v\times \tilde{v}^{*,k}_{\partial t}}{\left(\lambda^v(\xi,\tilde{w}^{*,k}(\xi,t))\right)^2} d\xi,
\end{align}
with $c_1$ through $c_8$ as in Appendix \ref{appendix A}, and 
\begin{align}
\nu(x,z,t) =& \frac{\partial_t \tau^v(t;x) \times \lambda^v(x,z)}{\lambda^u(x,z)+\lambda^v(x,z)}, \label{nu} \\
\mu(x,z,t) =& \nu(x,z,t)\times \lambda^u(x,z), \label{mu} 
%h^u&(x,z,t)  = \nu(x,z,t)\times  f^u\left(x,z \right),    \label{hu}\\
%h^v&(x,z) = -\frac{f^v\left(x,z \right) }{\lambda^v(x,z)}. \label{hv}\\
%h^{ut}&(x,z,t) = \nu(x,z,t)\times \left( \begin{matrix} c_1\\c_2\\c_3\\c_4 \end{matrix}\right)^T 
%\left( \begin{matrix} (\tilde{u}^{*k}_{\partial t})^2\\\tilde{u}^{k*}_{\partial t}\,\tilde{v}^{k*}_{\partial t}\\\tilde{u}^{k*}_{\partial t}\\\tilde{v}^{k*}_{\partial t} \end{matrix}\right) \\
%h^{vt}&(x,z,t) = -\frac{1}{\lambda^v} \left( \begin{matrix} c_5\\c_6\\c_7\\c_8 \end{matrix}\right)^T 
%\left( \begin{matrix} \tilde{u}^{k*}_{\partial t}\,\tilde{v}^{k*}_{\partial t}\\(\tilde{v}^{k*}_{\partial t})^2\\\tilde{u}^{k*}_{\partial t}\\\tilde{v}^{k*}_{\partial t} \end{matrix}\right) 
\end{align}
and boundary and initial conditions 
\begin{align}
{ \tilde{v}^{*,k}(0,t) =}& { U^{*,k}(\tau^v(t;0)), }  \label{target vBC} \\ 
\tilde{v}^{*,k}_{\partial t}(0,t) =& \partial_t U^{*,k}(\tau^v(t;0)),   \label{target v_t BC}\\
\tilde{u}^{*,k}_{\partial t}(0,t) =&  \partial_v g^u(U^{*,k}(\tau^v(t;0)),\tau^v(t;0)) \times \tilde{v}^{*,k}_{\partial t}(0,t) \nonumber \\
& + \partial_t g^u(U^{*,k}(\tau^v(t;0)),\tau^v(t;0)),  \label{target uBC}\\
%{v}(1,\tau^v(t;x)) &= U^*(t_k;\tau^v(t;0))   \label{char lines vBC}. 
\tilde{u}^{*,k}(\cdot,t_k) =& \tilde{u}^k(\cdot,\tau^v(t_k;\cdot)),\\
%\tilde{v}^{*,k}(\cdot,t_k) =& \tilde{v}^k(\cdot,\tau^v(t_k;\cdot)),\\
\tilde{u}_{\partial t}^{*,k}(\cdot,t_k) =& \tilde{u}_t^k(\cdot,\tau^v(t_k;\cdot)). \label{target last}
\end{align}
Equations (\ref{target 1})-(\ref{mu}) are a copy of the dynamics of $w$ along the characteristic line $(\cdot,\tau^v(t;\cdot))$.
Their derivation uses (\ref{u_ts})-(\ref{v_ts}) and the approach from the proof  of Theorem 2 in \cite{strecker2017output}. Equations (\ref{target 3})-(\ref{target 4}) are here written as PDEs, but should strictly speaking be interpreted as integral equations like (\ref{u_ts}) and (\ref{v_t integral2}).    Note that { (\ref{target 2}) and (\ref{target 4})  are ODEs}  in the $x$-direction with no time-dynamics. { Also note that in  (\ref{target v_t BC}),  $\partial_t U^{*,k}(\tau^v(t;0))$ is the partial derivative of $U^{*,k}$ with respect to time evaluated at time $\tau^v(t;0)$, not the total derivative of $U^{*,k}(\tau^v(t;0))$ with respect to $t$}.

Analogously to Theorem \ref{thm:determinate set 1}, for every $k\in\N$, the boundary value problem (\ref{target 1})-(\ref{target last}) has a unique solution  on the determinate set $[0,1]\times[t_k,t_{k+1}]$ which, in terms of the original coordinates, corresponds to the domain
\begin{equation}
 \mathcal{A}(t_k)=\left\{(x,t):\,x\in[0,1],\,t\in[\tau^v(t_{k};x),\tau^v(t_{k+1};x)]\right\}.  \label{eq: A}
\end{equation}
That is, the solution satisfying $v(0,t)=U^{*,k}(t)$, $t\in[\tau_k,\tau_{k+1}]$,  is equal to $\tilde{w}^{*,k}$ on $\mathcal{A}(t_k)$. Therefore, the closed-loop solution of (\ref{wtilde_t})-(\ref{wtildeIC}) satisfies 
\begin{equation}
v(0,t) = \tilde{v}^{*,k}(0,t) =  U^{*,k}(t)   \label{wtilde=wtildetarget}
\end{equation}
 for all $t\in[t_k,\tau_{k+1}]$ if and only if 
\begin{equation}
v(1,t) = U(t)=\tilde{v}^{*,k}(1,t) \label{U(t) exact}
\end{equation}
for all $t\in[t_k,t_{k+1}]$.
That is, by  solving (\ref{target 4}) in the positive $x$-direction, i.e., backwards relative to  the propagation direction of the input, we can compensate (potentially destabilizing) source terms to obtain the control inputs as in (\ref{U(t) exact}) that achieve the target boundary values (\ref{wtilde=wtildetarget}).
}

\subsection{Control law and finite time convergence}
Following the preparations above, the control inputs over each sampling interval are computed by the following algorithm (see also Figure \ref{fig:outline control}).

\begin{algorithm}[H]
\begin{algorithmic}[1]
\REQUIRE sampling instant $t_k$, state $w(\cdot,t_k)$,  $\delta$  \\
\ENSURE control input $U(t)$ for $t\in[t_k,t_{k+1}]$\\ ~

\STATE predict  states in (\ref{prediction k})  by solving (\ref{wtilde_t})-(\ref{wtildeIC})  over determinate set $\D(t_k)$
\STATE set $U^{*,k}$ as in (\ref{U^*}).
\STATE solve  target dynamics (\ref{target 1})-(\ref{target last}) over $[0,1]\times[t_k,t_{k+1}]$
\STATE set $U(t)$, $t\in[t_k,t_{k+1}]$, as per (\ref{U(t) exact})
\end{algorithmic}
\caption{Control algorithm }
\label{algorithm_statefeedback}
\end{algorithm}

Assuming exact model knowledge, Algorithm \ref{algorithm_statefeedback} steers the system to the origin in finite time, as stated in the following theorem.

\begin{theorem} \label{thm state feedback exact}
There exist constants $\tilde{c}^{\prime}>0$, ${c}^{\prime}>0$ and $\delta^{\mathrm{max}}>0$, which depend on $\|w_0\|_{\infty}$, such that if $\|w_t(\cdot,0)\|_{\infty}\leq  \tilde{c}^{\prime}$, $\|\partial_t g^u\|_{\infty}\leq  \tilde{c}^{\prime}$ and $\delta\leq \delta^{\mathrm{max}}$, then (\ref{w_t})-(\ref{uvIC}), in closed loop with $U$  constructed according to Algorithm \ref{algorithm_statefeedback} has a unique global solution in $\X^{c,c^{\prime}}_{[0,1]\times[0,\infty)}$, with  $c=\kappa_1^2\|w_0\|_{\infty}$ and $\kappa_1$ as in (\ref{kappa_1}), that satisfies 
\begin{align}
w(\cdot,t)&=0 &&\text{for all} & t&\geq \tau^u\left(\tau_0+\frac{|v(0,\tau_0)|}{\delta};1\right). \label{eq exact state feedback thm}
\end{align} 
\end{theorem}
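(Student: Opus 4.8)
The plan is to run an induction over the sampling index $k$: assuming the closed-loop solution exists on $[0,1]\times[0,t_k]$ and stays in a fixed ball, show that Algorithm~\ref{algorithm_statefeedback} is well defined at $t_k$ and that the solution extends to $[0,1]\times[0,t_{k+1}]$ without leaving the ball; afterwards, read off the boundary trace $v(0,\cdot)$ and feed it into Theorems~\ref{thm: global existence condition} and~\ref{thm:determinate set 2} to obtain global existence, the state bound, and finite-time convergence.

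\emph{Step 1 (the algorithm is well defined and the solution extends).} Since there is no model, measurement or actuation error, $\tilde w^k\equiv w$, so the prediction~(\ref{prediction k}) is simply the restriction of $w$ to $\D(t_k)$; by Theorem~\ref{thm:determinate set 1} applied with initial time $t_k$, this restriction exists in some $\X^{c,c'}_{\D(t_k)}$ and is independent of $U$ on $[t_k,\infty)$, provided $\|w_t(\cdot,t_k)\|_\infty$ and $\|\partial_t g^u\|_\infty$ are below the threshold there. The target problem~(\ref{target 1})--(\ref{target last}) is then solved over $[0,1]\times[t_k,t_{k+1}]$: its $v_{\partial t}$-equation is the same quadratic integral equation as~(\ref{v_t integral2}), integrated in the $x$-direction up to the ``terminal'' value $x=1$ reached after a travel time $\int_0^1(\lambda^v)^{-1}\le l_{\Lambda^{-1}}$, and its initial derivative data $\tilde v^{*,k}_{\partial t}(0,\cdot)$, $\tilde u^{*,k}_{\partial t}(0,\cdot)$ are controlled by $\delta$ and $l_{g^u}\delta+\|\partial_t g^u\|_\infty$, while $\tilde u^{*,k}_{\partial t}(\cdot,t_k)$ is controlled by the $\D(t_k)$-bound~(\ref{bound w_t on D(0)}) on $w_t$; hence Lemma~\ref{lemma bound quadratic}, exactly as in the proofs of Theorems~\ref{thm semiglobal solution} and~\ref{thm: global existence condition}, rules out blow-up, the target problem has a unique solution, and $U=\tilde v^{*,k}(1,\cdot)$ is well defined. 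By the equivalence~(\ref{wtilde=wtildetarget})$\Leftrightarrow$(\ref{U(t) exact}), the closed-loop solution extends over $\mathcal{A}(t_k)$ with $v(0,t)=U^{*,k}(t)$ on $[\tau_k,\tau_{k+1}]$.

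\emph{Step 2 (boundary trace and uniform bounds).} Set $v_0^k=v(0,\tau_k)$. Formula~(\ref{U^*}) has slope $-\delta\,\mathrm{sgn}(v_0^k)$ while $v_0^k\neq 0$ and is $0$ afterwards; an elementary check gives $\mathrm{sgn}(v_0^{k+1})=\mathrm{sgn}(v_0^k)$ until the trace reaches zero, $U^{*,k+1}(\tau_{k+1})=v_0^{k+1}$, and $\tau_{k+1}+|v_0^{k+1}|/\delta=\tau_k+|v_0^k|/\delta$. Thus the pieces $U^{*,k}|_{[\tau_k,\tau_{k+1}]}$ concatenate into the single function that decays linearly from $v(0,\tau_0)$ to $0$ at $T^\ast:=\tau_0+|v(0,\tau_0)|/\delta$ and vanishes for $t\ge T^\ast$; since on $[0,\tau_0]$ the trace is the $\D(0)$-trace, $|v(0,\cdot)|\le\kappa_1\|w_0\|_\infty$ on all of $[0,\infty)$. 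The state bound $\|w\|_\infty\le\kappa_1^2\|w_0\|_\infty$ then follows by tracing characteristics over a single crossing of $[0,1]$ (transport time $\le l_{\Lambda^{-1}}$, with $|v(0,\cdot)|$, $|g^u(v(0,\cdot))|$ and $\|w_0\|_\infty$ all $\le\kappa_1\|w_0\|_\infty$), i.e.\ exactly the estimate behind Theorems~\ref{thm: global existence condition} and~\ref{thm:determinate set 2}, while the time derivative stays $\le\max\{\delta,(\max\{1,l_{g^u}\}\|w_t(\cdot,0)\|_\infty+\|\partial_t g^u\|_\infty)e^{2\gamma_1l_{\Lambda^{-1}}}\}$. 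Choosing $\tilde c'$ and $\delta^{\max}$ small enough relative to this ball --- so that the constants in~(\ref{gamma_1})--(\ref{gamma_2}), taken over $\|z\|\le\kappa_1^2\|w_0\|_\infty$, still keep the thresholds of Theorems~\ref{thm:determinate set 1} and~\ref{thm: global existence condition} satisfied --- closes the induction and, via Theorem~\ref{thm: global existence condition}, yields the global solution in $\X^{\kappa_1^2\|w_0\|_\infty,c'}_{[0,1]\times[0,\infty)}$.

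\emph{Step 3 (finite-time convergence).} Apply Theorem~\ref{thm:determinate set 2} with $T'=T^\ast=\tau_0+|v(0,\tau_0)|/\delta$: since $v(0,t)=0$ for all $t\ge T^\ast$, we get $w(x,t)\equiv 0$ for every $x\in[0,1]$ and $t\ge\tau^u(T^\ast;x)$; as $\tau^u(T^\ast;\cdot)$ is increasing in $x$, $t\ge\tau^u(T^\ast;1)$ implies $t\ge\tau^u(T^\ast;x)$ for all $x$, which is precisely~(\ref{eq exact state feedback thm}). I expect the main obstacle to be the circularity in Step~2: the admissible size of $\|w_t\|$ (through~(\ref{gamma_1})--(\ref{gamma_2})) depends on the state ball, solvability of the predictions and of the target problem at each $t_k$ depends on $\|w_t\|$ remaining below thresholds, and the state ball only comes out to $\kappa_1^2\|w_0\|_\infty$ once $\delta$ is small --- so one must fix the ball first and then shrink $\tilde c'$ and $\delta^{\max}$ until everything is mutually consistent.
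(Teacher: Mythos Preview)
Your proposal is correct and follows essentially the same approach as the paper. The paper is more terse: it directly asserts that $U^{*,k+1}(s)=U^{*,k}(s)$ for $s\ge\tau_{k+1}$, hence $v(0,t)=U^{*,0}(t)$ for all $t\ge\tau_0$, which is exactly your concatenation argument in Step~2; it then gets the state bound $\kappa_1^2\|w_0\|_\infty$ and finite-time convergence from Theorems~\ref{thm: global existence condition} and~\ref{thm:determinate set 2} just as you do, and resolves the circularity you flag by fixing the ball $\kappa_1^2\|w_0\|_\infty$ first and then computing $\gamma_1,\gamma_2$ over that ball to set $\delta^{\max}=\tilde c'_2$ and $\tilde c'$ via~(\ref{bound w_t on D(0)}). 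Your explicit induction over sampling instants and the well-posedness check for the target dynamics in Step~1 are details the paper leaves implicit under ``by construction''.
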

\begin{proof} 
By construction, %(see also (\ref{wtilde=w}), (\ref{U^*}) and (\ref{wtilde=wtildetarget})), 
\begin{equation}
v(0,t) = U^{*,k}(t)  \label{v(0)=U*k}
\end{equation}
 for  $k=\left\lfloor \frac{t}{\theta} \right\rfloor$  and all $t\geq \tau_0$. Substituting  $t=t_{k+1}$ in (\ref{U^*}) for $k\in\N$, it is straightforward to see that 
\begin{equation}
U^{*, k+1}(s)=U^{*, k}(s)
\end{equation} 
for all  $s\geq \tau_{k+1}$. Therefore, 
\begin{equation}
v(0,t) = U^{*,0}(t)  \label{v(0)=U*0}
\end{equation}
for all $t\geq \tau_0$. 
Assuming existence of the solution, convergence to the origin follows from $U^{*, 0}(t)=0$ for all $t\geq \tau_0+\frac{|v(0,\tau_0)|}{\delta}$ and Theorem \ref{thm:determinate set 2}.

We next show that the solution exists in $\X^{c,c^{\prime}}_{[0,1]\times[0,\infty)}$. By Theorem \ref{thm:determinate set 1}, $\|w(x,t)\|\leq \kappa_1\|w_0\|_{\infty}$ for all $(x,t)\in\D(0)$. Due to the design of $U^*$, this implies $\|v(0,\cdot)\|_{\infty}\leq \kappa_1\|w_0\|_{\infty}$. Using Theorem  \ref{thm: global existence condition}, we obtain the a-priori bound $\|w\|_{\infty}\leq \kappa_1^2\|w_0\|_{\infty}$. Equation (\ref{ctildeprime estimate 2}) in Theorem  \ref{thm: global existence condition} also provides the condition 
\begin{equation}
\|v(0,\cdot)\|_{\infty}\leq \tilde{c}^{\prime}_2= \frac{1}{2\max\{1,l_{g^u}\}}e^{-\gamma_2},
\end{equation}
where $\gamma_2$ must be computed according to (\ref{gamma_2}) with the supremum taken over  $\|z\| \leq \kappa_1^2\|w_0\|_{\infty}$. For $t\geq \tau_0$, $|v_t(0,t)|\leq \tilde{c}^{\prime}_2$ is ensured simply by setting $\delta^{\mathrm{max}}=\tilde{c}^{\prime}_2$. Using Equation (\ref{bound w_t on D(0)}), 
\begin{equation}
\tilde{c}^{\prime} = \frac{1}{4\max\{1,l_{g^u}\}^2}e^{-2\gamma_1\gamma_2}, \label{ctilde 3}
\end{equation}
where $\gamma_1$ must be computed according to (\ref{gamma_1}) with the supremum taken over $\|z\|\leq \kappa_1\|w_0\|_{\infty}$, is sufficient to ensure $|v_t(0,t)|\leq \tilde{c}^{\prime}_2$ for all $t<\tau_0$. Note that (\ref{ctilde 3}) is stronger than the bounds on $\|u_t(\cdot,0)\|_{\infty}$ and $\|\partial_t g^u\|_{\infty}$  needed to apply Theorem \ref{thm: global existence condition} in the step above.
\end{proof}

The sampling period $\theta$ does not affect performance in Theorem \ref{thm state feedback exact} because zero model uncertainty is assumed. However, it is shown in Section \ref{sec:robustness} that, at least in certain cases,   the choice of both  $\theta$ and $\delta$ can affect robustness margins.

%\begin{remark}
%The convergence time in Theorem \ref{thm state feedback exact} might first seem larger than the minimum convergence times stated in the local result  \cite{li2003exact}. However, $\tau^u\left(\tau_0+\frac{|v(0,\tau_0)|}{\delta};1\right)$ gets arbitrarily close to $ \frac{1}{\lambda^v(\cdot,0)}+\frac{1}{\lambda^u(\cdot,0)}$ when the initial condition is made small and space-invariant speeds are assumed.
%\end{remark}

\begin{remark}[Continuous-time control law]\label{remark continuous feedback}
While the sampled-time control law in Algorithm \ref{algorithm_statefeedback} is likely preferable in practical applications, it is possible to write an equivalent control law where the control law is evaluated continuously in time. Such a continuous-time control law could be implemented as follows:
\begin{algorithm}[H]
\begin{algorithmic}[1]
\REQUIRE time $t$, state $w(\cdot,t)$, $\delta$\\
\ENSURE derivative of control input $\partial_t U(t)$\\ ~

\STATE predict $\tilde{w}(\cdot,\tau^v(t;\cdot))$ and $\tilde{u}_t(\cdot,\tau^v(t;\cdot))$ by solving (\ref{wtilde_t})-(\ref{wtildeIC})   over determinate set $\D(t)$
\STATE solve
\begin{align*}
\tilde{v}_{\partial t}^*(x,t) =&  \partial_t U^*(t) - \int_0^x  \frac{1}{\lambda^v} \left[c_5 \,\tilde{u}_{\partial t}^*\tilde{v}_{\partial t}^* + c_6\, (\tilde{v}_{\partial t}^*)^2 \right. \\
& \quad \left. + c_7\, \tilde{u}_{\partial t}^* + c_8 \,\tilde{v}_{\partial t}^* \right] d\xi,\\
%\partial_t U^*(t) =& \begin{cases} -\delta & \tilde{v}(0,\tau^v(t;0)) > 0 \\ 0 & \tilde{v}(0,\tau^v(t;0)) = 0 \\ \delta & \tilde{v}(0,\tau^v(t;0)) < 0 \\ \end{cases}
\partial_t U^*(t) =&  -\delta\times \operatorname{sign}\left(\tilde{v}(0,\tau^v(t;0))\right), 
\end{align*}
where $\tilde{u}_{\partial t}^*(\cdot,t) =\tilde{u}_t(\cdot,\tau^v(t;\cdot))$, over $x\in[0,1]$
\STATE set $\partial_t U(t) = \tilde{v}_{\partial t}^*(1,t)$
\end{algorithmic}
\caption{Continuous-time control algorithm}
\label{algorithm_continuous}
\end{algorithm}
\noindent Here, we used the same notation as before but omit the index $k$ representing sampling. To avoid chattering, it is advisable to use a smooth approximation of the $\operatorname{sign}$-function. 
\end{remark}

{
Remark \ref{remark continuous feedback} highlights one of the differences between the quasilinear systems considered here, and semilinear, including linear systems. Namely, the control law can only prescribe how  the control input changes, either by computing  the time-derivative $\partial_t U(t)$ in a continuous-time fashion as in Algorithm \ref{algorithm_continuous}, or by computing a compatible input over the interval $[t_k,t_{k+1}]$ in a sampled-time fashion as in Algorithm \ref{algorithm_statefeedback}. { Whereas in the semilinear case in $L^{\infty}$, there is no compatibility condition of form (\ref{compatibility}) and the control input can be chosen freely in $L^{\infty}$.}
%
%
% {\red The controller cannot prescribe the control input $U(t)$ in a continuous-time fashion as in the semilinear case, because it is uniquely determined by compatibility condition (\ref{compatibility}) as  $U(t) = v(1,t)$. }

Despite these differences, Algorithm \ref{algorithm_statefeedback} is  related to the controller in \cite{strecker2017output}. For semilinear systems, the state-space can be changed to $L^{\infty}$ and by choosing  $U^*(t)\equiv 0$ for all $t\geq \tau_0$ and using an infinitesimal sampling time $\theta\rightarrow 0$, Algorithm \ref{algorithm_statefeedback} yields  equivalent control inputs as the state-feedback control law from \cite{strecker2017output}.}  Moreover, for linear systems, and again in the limit $\theta\rightarrow 0$, it has been shown in \cite[Section 3.4]{strecker2017output} that the control law gives the same control inputs as the backstepping controller \cite{vazquez2011backstepping}. Conversely,  Algorithm \ref{algorithm_statefeedback} can be seen as an implementation of the sampled-control ideas indicated in \cite[Remark 6]{strecker2017output}. 

{ Algorithm \ref{algorithm_statefeedback} is also related to the open-loop control law in \cite{li2003exact}  in the sense that by choosing the sampling period $\theta \geq \tau^u\left(\tau_0+\frac{|v(0,\tau_0)|}{\delta};1\right)$ (i.e., long enough so that the state converges to the origin within the first sampling interval), it is possible to obtain the same control inputs.  If the initial condition is made small and space-invariant speeds are assumed, the convergence time in Theorem \ref{thm state feedback exact} gets arbitrarily close to the minimum convergence times stated in the local result  \cite{li2003exact}, because for $\|w_0\|\rightarrow 0$ we have $\tau_0\rightarrow \frac{1}{\lambda^v(w=0)}$, $|v(0,\tau_0)|\rightarrow 0$ and $\tau^u(\tau_0;1)\rightarrow \frac{1}{\lambda^v(w=0)}+\frac{1}{\lambda^u(w=0)}$.}

\subsection{Tracking}\label{sec:tracking}
It is straightforward to modify the control law to solve tracking problems of the form
\begin{equation}
v(0,t) = \phi(t),
\end{equation}
which includes many problems of the form (inserting (\ref{uBC}) and solving for $v(0,\cdot)$)
\begin{equation}
v(0,t) = \tilde{\phi}(u(0,t),t).
\end{equation}
Theorem \ref{thm: global existence condition}  ensures well-posedness for tracking signals $\phi$ that are bounded and have small time-derivative ($\leq \delta^{\mathrm{max}}$). The virtual input $U^*$ can be designed to converge from the prediction $v(0,\tau_k)$ to the reference signal $\phi$ with rate $\delta$, until $U^*$ becomes equal to $\phi$ within finite time. Here, $\delta\geq \|\partial_t\phi(\cdot)\|_{\infty}$ is necessary so that the control signal is able to follow the reference.

\section{A robustness certificate} \label{sec:robustness}
In this section we show that, under some more restrictive assumptions on the model coefficients, the feedback control law from Section \ref{sec: exact state feedback} has inherent robustness properties with respect to small errors in the model, control input and  measurements. For simplicity, we assume that the boundary conditions are time-invariant.

\subsection{Assumptions and notation} 
Assume  the actual, uncertain dynamics are
\begin{align}
{w}_t(x,t) &= \bar{\Lambda}(x,w(x,t))\,{w}_x(x,t) + \bar{F}(x,{w}(x,t)), \label{wtildeerror_t}\\
u(0,t)& =  \bar{g}^u(v(0,t)), \label{utildeerrorBC}\\
v(1,t)& = \bar{U}(U(t)) + \bar{g}^v({u}(1,t)), \label{vtildeerrorBC}%\\
%\end{align}
%\begin{align}
%{w}(x,0) &= \tilde{w}_0(t_k;x)   \label{wtildeerrorIC}
\end{align}
with $\bar{\Lambda}=\operatorname{diag}(-\bar{\lambda}^u,\bar{\lambda}^v)$ and $\bar{F}=(\bar{f}^u~\bar{f}^v)^T$, and where $\bar{U}$ is some nonlinear function of $U(t)$ representing actuator errors. The data $\bar{\Lambda}$, $\bar{F}$, $\bar{g}^u$ and $\bar{g}^v$   satisfy the same basic assumptions as listed in Section \ref{sec:assumptions}.  We also consider mismatch between the measurement $W_k$ and the actual state at time $t_k$. Instead of (\ref{measurement exact state feedback}),  $W_k$ is assumed to be Lipschitz-continuous and to satisfy the compatibility conditions
\begin{align}
%\tilde{v}(t_k;1,t_k) = \lim_{\begin{array}{c}t\rightarrow t_k\\ t<t_k \\ t\rightarrow t_k\end{array}}
\tilde{v}^{k}(1,t_k) &= \lim_{t\rightarrow t_k} U(t), \label{compatibility measurement}\\
\tilde{u}^{k}(0,t_k) &= g^u(\tilde{v}^{k}(0,t_k)), \label{compatibility measurement 2}
\end{align}
as well as the error bounds below.
The model, measurement and input errors are assumed to be bounded as follows:
\begin{align}
\|\bar{\Lambda}(x,z)-\Lambda(x,z)\| & \leq \epsilon_{\Lambda} \|z\|_{\infty}, \label{error bound Lambda} \\
\|\bar{F}(x,z)-F(x,z)\| & \leq \epsilon_{F} \|z\|_{\infty}, \label{error bound F}\\
\|\bar{g}^u(z,t)-g^u(z,t)\| & \leq \epsilon_{g_u} \|z\|_{\infty}, \\
\|\bar{g}^v(z,t)\| & \leq \epsilon_{g_v} \|z\|_{\infty},\\
%\|\bar{g}^u_t(z,t)-g^u_t(z,t)\| & \leq \epsilon_{g^u_{t}}, \\
%\|\bar{g}^v_t(z,t)\| & \leq \epsilon_{g^v_{t}}, \\
%\|w(x,t_k)-\tilde{w}(t_k;x,t_k)\| & \leq \epsilon_w \|w(\cdot,t_k)\|_{\infty}, \label{error bound w}\\
\|w(x,t_k)-W_k(x)\| & \leq \epsilon_w \|w(\cdot,t_k)\|_{\infty}, \label{error bound w}\\
%\|w_t(x,t_k)-\tilde{w}_t(t_k;x,t_k)\| & \leq \epsilon_{w_t}\|w_t(\cdot,t_k)\|_{\infty}, \label{error bound w_t}\\
\|w_t(x,t_k)-{\footnotesize \frac{\partial}{\partial t}}W_k(x)\| & \leq \epsilon_{w_t}\|w_t(\cdot,t_k)\|_{\infty}, \label{error bound w_t}\\
|\bar{U}(U)-U| &\leq \epsilon_U |U|, \label{error bound U}%\\
%|\partial_t \bar{U}(U,t)| &\leq \epsilon_{U_t} \label{error bound Ut}
\end{align}
for some appropriate positive constants $\epsilon_{\Lambda}$, $\epsilon_{F}$ through $\epsilon_{U}$  and  all $x\in[0,1]$, $z\in\R^n$, $n\in\{1,2\}$, $k\in\N$ and $t\geq 0$. In (\ref{error bound w_t}), the derivative $\frac{\partial}{\partial t}W_k$ is obtained by substituting $W_k$ into the right-hand side of (\ref{wtilde_t}) using the nominal parameters.

{ 
Assume the coupling terms are so small that there exists a $\gamma_4>0$ with 
\begin{align}
\gamma_4 \geq \sup_{(\ref{error bound Lambda}),(\ref{error bound F})} \sup_{x\in[0,1],\,\|z\|\leq c}\max\left\{\left|\frac{\bar{c}_{6}}{\bar{\lambda}^v}\right|,\left|\frac{\bar{c}_{8}}{\bar{\lambda}^v}\right|\right\},   \label{condition 1}\\
 \sup_{\begin{subarray}{c}(\ref{error bound Lambda}),(\ref{error bound F}),\\x\in[0,1],\,\|z\|\leq c\end{subarray}} \max\{ \left|\frac{\bar{c}_{5}}{\bar{\lambda}^v}\right|,\left|\frac{\bar{c}_{7}}{\bar{\lambda}^v}\right| \} \kappa_2 \leq \exp(-4\gamma_4) \gamma_4, \label{condition 2}
\end{align}
for $c=1.5\kappa_1^2\|w_0\|_{\infty}$, $\kappa_2$ as in (\ref{kappa_2}) and where $\sup_{(\ref{error bound Lambda}),(\ref{error bound F})}$ indicates that the supremum of $\bar{c}_1$ through $\bar{c}_{8}$ is taken over all $\Lambda$ satisfying (\ref{error bound Lambda}) and all  $F$ satisfying (\ref{error bound F}), and  a $\gamma_5>0$ with
\begin{align}
\gamma_5 &\geq \sup_{x\in[0,1],\,\|z\|\leq \tilde{c}}\max\left\{\left|\frac{{c}_{6}}{\lambda^v}\right|,\left|\frac{{c}_{8}}{\lambda^v}\right|\right\},   \label{condition 11} \\
 \sup_{x\in[0,1],\,\|z\|\leq \tilde{c}} &\max\{ \left|\frac{c_{5}}{\lambda^v}\right|,\left|\frac{{c}_{7}}{\lambda^v}\right| \} \kappa_3 \kappa_2 \leq \exp(-4\gamma_{5}) \gamma_{5}, \label{condition 12}
\end{align}
for  $\tilde{c}=2\kappa_1 c$ and $\kappa_3$ as in (\ref{kappa_3}). Moreover, the uncertainties are assumed to satisfy the bounds (with $\tilde{\kappa}_{10}$ as in (\ref{hlp9}))
\begin{align}
\epsilon_w &\leq 1,& \epsilon_{w_t} &\leq 1,  \label{epsilonw<1}\\
\epsilon_{g_v}&\leq \min\left\{\frac{1}{6\kappa_2}e^{-4\gamma_4},\frac{1}{\tilde{\kappa}_{10}}\right\}, &  \epsilon_U&\leq \frac{1}{4}.\label{condition 7}
\end{align}
}

Denote the characteristic lines and determinate sets of the actual, uncertain system by
 \begin{align}
 \bar{\tau}^v(t;x) &= t+\int_x^1\frac{1}{\bar{\lambda}^v(x,{w}(x, \bar{\tau}^v(t;x)))},   \label{tau^v bar}\\
 \bar{\tau}^u(t;x) &= t-\int_x^1\frac{1}{\bar{\lambda}^u(x,{w}(x, \bar{\tau}^u(t;x)))},   \label{tau^u bar}\\
  \bar{\tau}_k &=  \bar{\tau}^v(t_k;0),   \label{tau_bar_k}\\
\bar{\D}_k &= \left\{(x,s):\,x\in[0,1],\,s\in[t_k,\bar{\tau}^v(t_k;x)]\right\}. \label{D bar}
 \end{align}
 By contrast, the characteristic lines of the prediction at the $k$-th sampling instant  with nominal parameters are denoted by
%  \begin{align}
% {\tau}^v_k(t;x) &= t+\int_x^1\frac{1}{{\lambda}^v(x,\tilde{w}(t_k;x, {\tau}^v_k(t;x)))},   \label{tau^v bar}\\
% {\tau}^u_k(t;x) &= t-\int_x^1\frac{1}{{\lambda}^u(x,\tilde{w}(t_k;x, {\tau}^u_k(t;x)))},   \label{tau^u bar}\\
%\tilde{\tau}_k &=  {\tau}^v_k(t_k;0),   \label{tau_bar_k}\\
%{\D}_k &= \left\{(x,s):\,x\in[0,1],\,s\in[t_k,{\tau}^v_k(t_k;x)]\right\}. \label{D bar}
% \end{align}
% 
% 
   \begin{align}
 \tilde{\tau}^v_k(t;x) &= t+\int_x^1\frac{1}{{\lambda}^v(x,\tilde{w}^{k}(x,  \tilde{\tau}^v_k(t;x)))},   \label{tau^v tilde}\\
 \tilde {\tau}^u_k(t;x) &= t-\int_x^1\frac{1}{{\lambda}^u(x,\tilde{w}^{k}(x,  \tilde{\tau}^u_k(t;x)))},   \label{tau^u tilde}\\
\tilde{\tau}_k &=   \tilde{\tau}^v_k(t_k;0),   \label{tau_tilde_k}\\
 \tilde{\D}_k &= \left\{(x,s):\,x\in[0,1],\,s\in[t_k, \tilde{\tau}^v_k(t_k;x)]\right\}. \label{D tilde}
 \end{align}
 The difference between the actual and predicted characteristic lines is illustrated in Figure \ref{fig:characteristic lines uncertain}.
 
 \begin{figure}[htbp!]\centering
\includegraphics[width=.7\columnwidth]{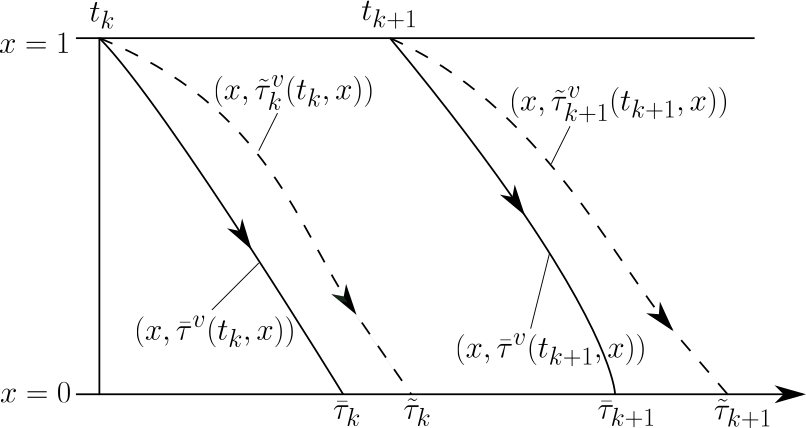}
\caption{Schematic of the characteristic lines of the actual system (solid lines) and the prediction (dashed).} 
\label{fig:characteristic lines uncertain}
\end{figure}

\subsection{Main robustness result}
We now state our main robustness result. In Section \ref{sec: exact state feedback}, we rely on being able to exactly predict and reverse the system dynamics so that the actual boundary value $v(0,\cdot)$ becomes equal to the designed one. In presence of model uncertainty, however, induced prediction errors lead to mismatch between designed and actual boundary values. We show that for sufficiently small uncertainties, the closed-loop trajectory still converges to a ball around the origin, which  can be made arbitrarily small by restricting the size of the uncertainties. The required size of the allowable uncertainties also depends on the norm of the initial condition. Moreover, during transients the solution remains bounded by a constant depending on the initial condition.

\begin{figure}[htbp!]\centering
\includegraphics[width=.9\columnwidth]{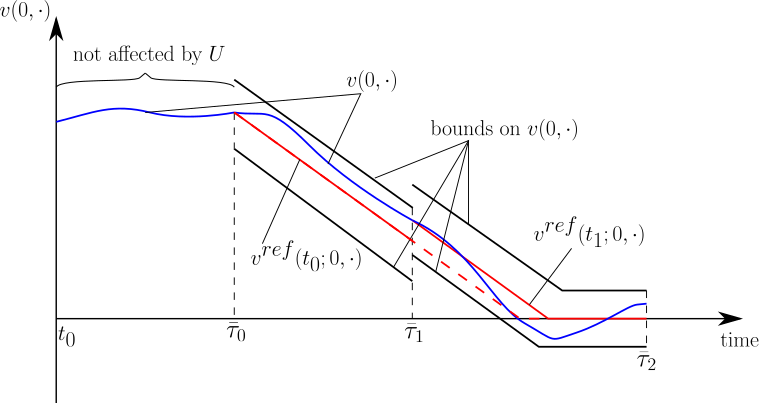}
\caption{Schematic of the trajectory of $v(0,\cdot)$ (blue) as well as $v^{\mathrm{ref}}$ (red) and bounds on $v(0,\cdot)$ (black) in each time step.}
\label{fig:proof robustness}
\end{figure}
\begin{theorem}\label{thm:robust convergence}
 Let $c=1.5\,\kappa_1^2\,\|w_0\|_{\infty}$ with $\kappa_1$ as in (\ref{kappa_1}), let $\bar{\delta}^{\mathrm{max}}$ be given by (\ref{delta bar max}) and let $\delta^{\mathrm{max}}=\delta^{\mathrm{max}}\left(\bar{\delta}^{\mathrm{max}}\right)$ be given by  (\ref{delta rob}). Assume $\|w_t(\cdot,0)\|_{\infty}\leq \frac{\bar{\delta}^{\mathrm{max}}}{\kappa_2}$ with $\kappa_2$ as in (\ref{kappa_2}).

For every $\varepsilon>0$, $\theta>0$ and $\delta\in(0,\delta^{\mathrm{max}}]$, there exist   $\epsilon^{\mathrm{max}}>0$ and $T>0$ such that if  
\begin{equation}
\epsilon_{\mathrm{sum}} = \epsilon_w+\epsilon_{\Lambda}+\epsilon_F+\epsilon_U+\epsilon_{g^u}+\epsilon_{g^v} \leq \epsilon^{\mathrm{max}}, \label{epsilon_sum}
\end{equation}
 then the uncertain closed-loop system  (\ref{wtildeerror_t})-(\ref{vtildeerrorBC}) with $U$ as computed by Algorithm \ref{algorithm_statefeedback}  has a  global solution in  $\X^{c, c^{\prime}}_{[0,1]\times[0,\infty)}$  that satisfies 
\begin{equation}
\|w(\cdot,t)\|_{\infty}  <\varepsilon \quad \text{ for all } \quad t\geq T. \label{robust convergence}
\end{equation}
%For every  $\theta>0$ and $\delta\in(0,\delta^{\mathrm{max}}]$, there exist   $\epsilon^{\mathrm{max}}>0$, $\bar{\gamma}_1>0$ and $\bar{\gamma}_2>0$ such that if   $\epsilon_{\mathrm{sum}}\leq \epsilon^{\mathrm{max}}$, then the uncertain closed-loop system  (\ref{wtildeerror_t})-(\ref{vtildeerrorBC}) with $U$ as computed by Algorithm \ref{algorithm_statefeedback}  has a  global solution in  $\X^{c, c^{\prime}}_{[0,1]\times[0,\infty)}$  that satisfies 
%\begin{equation}
%\|w(\cdot,t)\|_{\infty}  < \bar{\gamma}_1  e^{-\bar{\gamma}_2\,t}\|w_0\|_{\infty}. \label{robust convergence}
%\end{equation}
\end{theorem}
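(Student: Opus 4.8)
The plan is to track the error between the actual boundary trace $v(0,\cdot)$ and the nominally designed virtual input $U^{*,k}$, and to show that this error grows in a controlled, contractive-on-average fashion from one sampling step to the next, so that after finitely many steps both the trace and (by Theorem~\ref{thm:determinate set 2}) the whole state are confined to an arbitrarily small ball. Concretely, at sampling instant $t_k$ the controller predicts the state on $\tilde{\D}_k$ using the nominal model applied to the measured state $W_k$, designs $U^{*,k}$ as in (\ref{U^*}), and computes $U$ by reversing the nominal dynamics over $[0,1]\times[t_k,t_{k+1}]$; meanwhile the true state evolves on $\bar{\D}_k$ under the uncertain dynamics. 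The first task is an a~priori boundedness estimate: one shows, by a bootstrap/continuation argument in time, that as long as the error terms stay small the true state stays in $\X^{c,c'}_{[0,1]\times[0,\infty)}$ with $c=1.5\kappa_1^2\|w_0\|_\infty$. For this I would use Theorems~\ref{thm: global existence condition} and \ref{thm:determinate set 1} (applied to the true, barred system), observing that the designed $v(0,\cdot)$ stays bounded by roughly $\kappa_1\|w_0\|_\infty$ plus a small error, so the state norm stays below $\kappa_1^2\|w_0\|_\infty$ plus a small multiple of $\epsilon_{\mathrm{sum}}$, which the $1.5$ factor absorbs. The conditions (\ref{condition 1})--(\ref{condition 12}) and (\ref{epsilonw<1})--(\ref{condition 7}) are exactly what is needed to keep the quadratic-in-$w_t$ integral inequalities (\ref{u_t integral2})--(\ref{v_t integral2}), and their barred counterparts, from blowing up via Lemma~\ref{lemma bound quadratic}, uniformly in time.

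Next I would establish a per-step error propagation lemma. Write $e_k(\cdot)$ for the mismatch on the interval $[\bar\tau_k,\bar\tau_{k+1}]$ between the realized $v(0,\cdot)$ and the target $U^{*,k}$ evaluated along the appropriate characteristic. Three sources feed $e_k$: (i) the state-measurement error $\epsilon_w,\epsilon_{w_t}$ at $t_k$; (ii) the prediction error accumulated over $\D_k$ because the controller propagates the nominal dynamics while the truth obeys the barred ones, controlled by $\epsilon_\Lambda,\epsilon_F,\epsilon_{g^u}$; and (iii) the error incurred when the nominally computed $U$ is sent through the uncertain actuator and then transported to $x=0$ through the uncertain in-domain dynamics, controlled by $\epsilon_U,\epsilon_\Lambda,\epsilon_F,\epsilon_{g^v}$. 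Each of these is estimated by Gronwall-type arguments along characteristic lines using the Lipschitz constants $l_\Lambda,l_F,l_{g^u},l_{\Lambda^{-1}}$ and the boundedness from the previous paragraph, giving $\|e_k\|_\infty \le C\,\epsilon_{\mathrm{sum}}\,(\|w(\cdot,t_k)\|_\infty + \ldots)$ for a constant $C=C(\|w_0\|_\infty,\theta,\delta)$; crucially this bound is \emph{linear} in the uncertainty and uniformly bounded in $k$. Combining with the fact that $U^{*,k}$ itself drives its argument toward zero at rate $\delta$ until it hits zero, one gets a recursion of the schematic form $M_{k+1} \le \max\{M_k - \delta\theta',\ C\epsilon_{\mathrm{sum}}\} + C\epsilon_{\mathrm{sum}}$, where $M_k = \sup_{t\ge\bar\tau_k}|v(0,t)|$ (this is the content suggested by Figure~\ref{fig:proof robustness}). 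Such a recursion forces $M_k$ to decrease until it reaches an $O(\epsilon_{\mathrm{sum}})$ floor, in finitely many steps, and then remain there.

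From there the conclusion is mechanical: choosing $\epsilon^{\mathrm{max}}$ small enough that $C\epsilon^{\mathrm{max}} < \varepsilon/\gamma$ with $\gamma=\max\{1,l_{g^u}\}\exp(l_F l_{\Lambda^{-1}})$ as in Theorem~\ref{thm:determinate set 2}, and $T$ large enough to cover the finitely many descent steps plus one transport time $\tau^u$ across the domain, Theorem~\ref{thm:determinate set 2} applied on $t\ge$ (that step) yields $\|w(\cdot,t)\|_\infty \le \gamma M_{k(T)} < \varepsilon$ for all $t\ge T$. Global existence in $\X^{c,c'}_{[0,1]\times[0,\infty)}$ is the a~priori bound from the first paragraph, now known to hold for all time because the error feedback never drives $v(0,\cdot)$ outside the ball the $1.5$-factor was sized for.

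I expect the main obstacle to be step two, specifically the prediction-error estimate (ii) together with the need to compare two \emph{different families of characteristic lines} — the true $\bar\tau^v$ versus the nominal $\tilde\tau^v_k$ (cf.\ Figure~\ref{fig:characteristic lines uncertain}). Because $\Lambda$ depends on the state, an $O(\epsilon)$ discrepancy in $\Lambda$ or in the state produces an $O(\epsilon)$ discrepancy in the characteristic curves and hence in $\tau^v$, which then re-enters the dynamics of $w_t$ multiplicatively; closing this requires simultaneously controlling $w$, $w_t$, the curves $\tau^v$, and $\partial_t\tau^v$, all on the determinate sets, and checking that the quadratic terms in $w_t$ do not amplify the $\epsilon$-perturbation beyond linear order — this is precisely why the extra smallness conditions (\ref{condition 1})--(\ref{condition 12}) on the ratios $c_i/\lambda^v$ are imposed. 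Getting a constant $C$ that is genuinely independent of $k$ (uniform in time), rather than one that grows with the number of reflections, is the delicate point, and is what distinguishes this result from the semi-global statements in \cite{cirina1970semiglobalsolutions,li2001semiglobal}.
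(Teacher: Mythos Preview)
Your proposal is essentially the same approach as the paper's proof: induct on $k$ to keep $|v(0,\bar\tau_k)|$ and $\sup|v(0,\cdot)|$ under the $1.5\kappa_1\|w_0\|_\infty$ ceiling, control $|v_t(0,\cdot)|\le\bar\delta^{\max}$ via Lemmas~\ref{lemma bound v_t 1}--\ref{lemma bound v_t 2}, derive a per-step error bound linear in $\epsilon_{\mathrm{sum}}$ (the paper's Lemmas~\ref{lemma: prediction error}--\ref{lemma error v(0)} are exactly your items (i)--(iii), including the two-families-of-characteristics comparison you flag as the hard part), and obtain a discrete recursion that forces $|v(0,\bar\tau_k)|$ into an $O(\epsilon_{\mathrm{sum}})$ ball in finitely many steps. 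Two small points where the paper is more concrete than your sketch: (a) rather than comparing $v(0,\cdot)$ directly to the nominal $U^{*,k}$, the paper introduces an intermediate \emph{reference} trajectory $v^{\mathrm{ref},k}$ computed by running Algorithm~\ref{algorithm_statefeedback} with the true (barred) parameters, so that $v^{\mathrm{ref},k}(0,\cdot)=U^{*,\mathrm{ref},k}$ starts exactly at $v(0,\bar\tau_k)$ and decays at rate $\delta$---this cleanly separates the ``error'' part of the recursion from the ``designed decay'' part; and (b) your $\theta'$ must be justified by an explicit lower bound $\sigma=e^{-l_{\Lambda^{-1}}^2 l_\Lambda \kappa_2\bar\delta^{\max}}\theta$ on $\bar\tau_{k+1}-\bar\tau_k$, obtained from the $w_t$-bound, since state-dependent speeds could in principle make consecutive boundary arrival times collapse.
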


The proof of Theorem \ref{thm:robust convergence} is very technical and is given in Section \ref{sec:proof robustness} following preparations over the next subsections. It consists of two steps. First, robust boundedness of the time derivative $w_t$ is shown. This ensures robust existence of the solution and is also used to bound the growth of the prediction error. Second, an estimate on the error between predicted and actual trajectories is developed to prove robust  convergence to a ball around the origin.

\subsection{Robust boundedness of $w_t$} \label{sec:robustness w_t}
In the nominal case, for any desired upper bound $\tilde{c}^{\prime}$ on $\|v_t(0,\cdot)\|_{\infty}$, choosing $\delta\leq \tilde{c}^{\prime}$ in (\ref{U^*})    automatically ensures $|v_t(0,t)|=|\partial_t U^{*,k}(t)|\leq \delta \leq \tilde{c}^{\prime}$ for all $t\geq \tau_0$. This is no longer automatically guaranteed in the presence of model uncertainty, because the actual value of $|v_t(0,t)|$ might be larger than the designed value of $|\partial_t U^{*,k}(t)|$. Instead, we seek a stronger bound on $\delta$ and conditions on the coupling coefficients between $u_t$ and $v_t$ (see (\ref{u_t integral2})-(\ref{v_t integral2})) so that $|{\partial_t } U^{*,k}(t)|\leq \delta$ still ensures $|v_t(0,t)|\leq \tilde{c}^{\prime}$.

Throughout section \ref{sec:robustness w_t}, we  fix sampling instance $k\in\N$ and suppose that the solution satisfies the a-priori bound $\sup_{t\in[0,\bar{\tau}_{k+1}]}|v(0,t)|\leq 1.5\,\kappa_1\,\|w_0\|_{\infty}$. Note that this implies $\sup_{x\in[0,1],t\in[0,\bar{\tau}^v(t_{k+1};x)]}|v(x,t)|\leq 1.5\,\kappa_1^2\,\|w_0\|_{\infty}$.

\subsubsection{Preliminaries}
With regards to the integral equations  (\ref{u_ts})-(\ref{v_ts}), let $\bar{c}_1$, $\bar{c}_2$, etc., be given by the expressions in Appendix \ref{appendix A}  with the  nominal $\lambda^v$ replaced by the uncertain $\bar{\lambda}^v$, etc.
Define 
\begin{align}
\gamma_3 &= \sup_{(\ref{error bound Lambda}),(\ref{error bound F})}\sup_{x\in[0,1],\,\|z\|\leq c}\max\left\{|\bar{c}_1+\bar{c}_2|,|\bar{c}_3+\bar{c}_4|, \right.\nonumber \\
& \qquad \qquad \qquad \left. |\bar{c}_5+\bar{c}_6|,|\bar{c}_7+\bar{c}_8| \right\},    \\
\kappa_2 &= \max\{1,l_{g^u}\} \,e^{2\gamma_3 l_{\Lambda^{-1}}}, \label{kappa_2}
\end{align}
for $c=1.5\,\kappa_1^2\,\|w_0\|_{\infty}$ and where $c_1$ etc.\ are evaluated at $(x,z)$ and  $\sup_{(\ref{error bound Lambda}),(\ref{error bound F})}$ indicates that the supremum of $\bar{c}_1$ through $\bar{c}_{8}$ is taken over all $\Lambda$ satisfying (\ref{error bound Lambda}) and all  $F$ satisfying (\ref{error bound F}).
Also define the interval
\begin{equation}
\I(t) = [\bar{\tau}^u(t;0),\bar{\tau}^v(t;0)].
\end{equation}
The following Lemma gives another expression for the relation between the state and the boundary values at $x=0$. By extending the solution to negative times, it allows a simpler characterization of $\|w_t(\cdot,t)\|_{\infty}$ in terms of $\|v_t(0,\cdot)\|_{\infty}$ alone, i.e., independently of the initial condition.
\begin{lemma} \label{lemma bijective map}
For every $t\geq 0$, there exists a bijective map 
\begin{equation}\begin{aligned}
\Phi:\,& \X_{[0,1]}^{\cdot,\cdot} \rightarrow \X_{\I(t)}^{\cdot,\cdot}\\
& w(\cdot,t) \mapsto v(0,\cdot)\label{bijective map}.
\end{aligned}\end{equation}
Moreover, with reference to (\ref{kappa_1}) and (\ref{kappa_2}),
\begin{align}
\frac{1}{\kappa_1} \|w(\cdot,t)\|_{\infty} \leq \sup_{s\in\I(t)} &|v(0,s)|  \leq \kappa_1 \|w(\cdot,t)\|_{\infty},  \label{bijective map estimates}\\
\frac{1}{\kappa_2} \|w_t(\cdot,t)\|_{\infty} \leq \operatorname*{ess\,sup}_{s\in\I(t)} &|v_t(0,s)|  \leq \kappa_2 \|w_t(\cdot,t)\|_{\infty}. \label{bijective map estimates derivative}
\end{align}
\end{lemma}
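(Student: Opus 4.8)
The plan is to prove Lemma~\ref{lemma bijective map} by making precise the idea that, along the characteristic lines emanating from $(0,s)$ for $s$ in a suitable interval, the solution near the line $t=\text{const}$ is completely determined by the trace $v(0,\cdot)$, and vice versa. Concretely, I would first extend the solution of (\ref{wtildeerror_t})--(\ref{utildeerrorBC}) backward in time: since the data (state, its derivative) at time $t$ are Lipschitz and satisfy the compatibility conditions, the argument of Theorem~\ref{thm semiglobal solution} (local existence/uniqueness for Lipschitz data, run in reverse time) produces a unique Lipschitz solution on a strip $[0,1]\times[t-l_{\Lambda^{-1}},\,t]$, which is enough to reach the boundary $x=0$ along every $u$- and $v$-characteristic through points $(x,t)$, $x\in[0,1]$. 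The interval $\I(t)=[\bar\tau^u(t;0),\bar\tau^v(t;0)]$ is exactly the set of times at which these characteristics hit $x=0$: the earliest is the $u$-characteristic through $(1,t)$ traced backward, the latest is the $v$-characteristic through $(1,t)$ traced backward (consistent with the determinate-set picture of Figure~\ref{fig:characteristic lines}, just run in reverse).

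Next I would define the map $\Phi$. Given $w(\cdot,t)$, the extended backward solution is determined, hence $v(0,\cdot)$ on $\I(t)$ is determined; this gives $\Phi$ and its well-definedness into $\X_{\I(t)}^{\cdot,\cdot}$ (Lipschitz regularity of the trace follows from Lipschitz regularity of the solution plus (\ref{utildeerrorBC}), exactly as in (\ref{eq bound u_t(0)})). For surjectivity and injectivity, I would argue that $v(0,\cdot)$ on $\I(t)$, together with the boundary law $u(0,\cdot)=\bar g^u(v(0,\cdot))$, constitutes Cauchy data on $\{0\}\times\I(t)$ for the system solved in the positive $x$-direction — this is the ``reversed-roles'' viewpoint already used in Theorem~\ref{thm: global existence condition} and Theorem~\ref{thm:determinate set 2}, via (\ref{w_x}), (\ref{u_x 1})--(\ref{v_t integral2}). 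By the small-derivative hypothesis (the same $\tilde c'$-type conditions assumed globally here, so Lemma~\ref{lemma bound quadratic} applies with $x=1$ playing the role of terminal time), this Cauchy problem has a unique solution on the determinate set, which is precisely the strip swept out back to time $t$ and contains $[0,1]\times\{t\}$; its restriction to $t$ recovers $w(\cdot,t)$. This shows $\Phi$ is a bijection, with inverse given by solving forward in $x$.

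The estimates (\ref{bijective map estimates}) and (\ref{bijective map estimates derivative}) then follow from the growth bounds already established. For the right-hand inequalities (state/derivative at time $t$ bounded by the trace at $x=0$): this is the content of the Theorem~\ref{thm: global existence condition}/Theorem~\ref{thm:determinate set 2} computation — integrating (\ref{u_x 1})--(\ref{v_x 1}) in $x$ contributes the factor $\exp(l_F l_{\Lambda^{-1}})$, one boundary reflection at $x=0$ contributes $\max\{1,l_{g^u}\}$, giving $\kappa_1$ for the state; for the derivative, (\ref{u_t integral2})--(\ref{v_t integral2}) with Lemma~\ref{lemma bound quadratic} and the reflection factor give $\kappa_2$ as defined in (\ref{kappa_2}). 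For the left-hand inequalities, I would run the same estimates along the \emph{backward} characteristics from $(x,t)$ down to $x=0$: $|v(0,s)|$ (resp.\ $|v_t(0,s)|$) at the hitting point controls, from below, $\|w(\cdot,t)\|_\infty$ up to the reciprocal of the same constants, because the $x$-integrated ODEs (\ref{u_x 1})--(\ref{v_x 1}) and integral equations are reversible and the reflection at $x=0$ again costs at most $\max\{1,l_{g^u}\}$ (note $u(0,s)$ is linked to $v(0,s)$ through $\bar g^u$, so the trace of the full state $w(0,\cdot)$ is comparable to $|v(0,\cdot)|$).

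The main obstacle, I expect, is making the backward extension and the interval $\I(t)$ rigorous simultaneously: one must verify that the backward solution exists long enough for \emph{every} characteristic through $[0,1]\times\{t\}$ to reach $x=0$ before the (backward) local existence horizon expires, and that $\I(t)$ is exactly, not just approximately, $[\bar\tau^u(t;0),\bar\tau^v(t;0)]$ — this requires monotonicity of the hitting-time map along $x$, which in turn uses positivity and boundedness of $\bar\lambda^u,\bar\lambda^v$ and non-crossing of characteristics (guaranteed by the bounded-gradient/small-derivative hypothesis). A secondary technical point is checking that the Lipschitz constants of the extended solution and of the trace stay finite (so the codomain $\X_{\I(t)}^{\cdot,\cdot}$ is correct); this is handled by the same $w_x$-via-(\ref{w_x}) bookkeeping used in Theorem~\ref{thm semiglobal solution}. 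Everything else is a repackaging of the determinate-set arguments already in Section~\ref{sec: preliminaries}, now applied on the time-reversed strip.
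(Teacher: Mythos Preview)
Your proposal is correct and follows essentially the same route as the paper's (very brief) proof: extend the solution to negative times via the Theorem~\ref{thm semiglobal solution}-type local existence argument, then invoke the determinate-set machinery of Theorems~\ref{thm:determinate set 1} and~\ref{thm:determinate set 2} in both the $t$-direction (forward and backward) and the positive $x$-direction to obtain bijectivity and the two-sided bounds. One minor correction: your labeling of which inequality comes from which direction is swapped --- solving in the positive $x$-direction (Theorems~\ref{thm: global existence condition}/\ref{thm:determinate set 2}) bounds the state by the trace, which is the \emph{left} inequality in (\ref{bijective map estimates})--(\ref{bijective map estimates derivative}), while solving in the $t$-direction forward and backward from $t$ (Theorem~\ref{thm:determinate set 1}) bounds the trace by the state, which is the \emph{right} inequality.
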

\begin{proof}
This follows directly by applying the techniques from Theorems \ref{thm:determinate set 1} and \ref{thm:determinate set 2} to system (\ref{w_t})-(\ref{uvIC}) in both the positive and negative $t$-direction, and to system (\ref{w_x})  in the positive $x$-direction, respectively. Note that $\partial_t g^u=0$ is assumed.
\end{proof}

\subsubsection{Existence of prediction}
Because of model uncertainty and the measurement error introduced at time $t_k$, it is not clear a priori that the prediction $\tilde{w}^{k}$ exists on the whole determinate set $\tilde{\D}_k$, nor that it satisfies a desirable bound on $\tilde{v}_t^{k}(0,\cdot)$, even if the actual trajectory $w$ exists on the whole of $\bar{\D}_k$ and satisfies a corresponding bound, or if the previous prediction $\tilde{w}^{k-1}$ existed on all of $\tilde{\D}_{k-1}$. It is possible to ensure existence of a solution $\tilde{w}^{k}$ on all of $\tilde{\D}_k$  by keeping $\|v_t(0,\cdot)\|_{\infty}$ sufficiently small and limiting the measurement error.
\begin{lemma} \label{lemma robust existence wtilde_t}
There exist constants $\bar{\delta}^{\mathrm{max}}_1>0$ and $\kappa_3>0$ such that if $|v_t(0,t)|\leq \bar{\delta}$ for some $\bar{\delta}\leq \bar{\delta}^{\mathrm{max}}_1$ and  all $t\in [\bar{\tau}^u(0;0),\bar{\tau}_k]$, then the prediction $\tilde{w}^{k}$ has a solution on $\tilde{\D}_k$ that satisfies
\begin{equation}
|\tilde{v}_t^{k}(0,t)|\leq \kappa_3\, \bar{\delta}  \label{vtilde_t < deltatilde}
\end{equation}
for all $t \in[t_k, \tilde{\tau}_k]$.
\end{lemma}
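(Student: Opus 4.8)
\emph{Overview and Step 1 (from $v_t(0,\cdot)$ to the measurement).} The plan is to propagate the hypothesised bound on $v_t(0,\cdot)$ along the actual trajectory into a bound on $\|w_t(\cdot,t_k)\|_{\infty}$, transfer this through the measurement error to $W_k$, and then rerun the existence/continuation argument of Theorems~\ref{thm semiglobal solution} and~\ref{thm:determinate set 1} on the \emph{nominal} prediction dynamics over $\tilde{\D}_k$. Concretely: since $t_k\geq0$ and the $u$-characteristics of the actual system do not cross, $\bar{\tau}^u(t_k;0)\geq\bar{\tau}^u(0;0)$, so $\I(t_k)=[\bar{\tau}^u(t_k;0),\bar{\tau}_k]\subseteq[\bar{\tau}^u(0;0),\bar{\tau}_k]$ and $\operatorname*{ess\,sup}_{s\in\I(t_k)}|v_t(0,s)|\leq\bar{\delta}$. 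Applying Lemma~\ref{lemma bijective map} to the uncertain system (its coefficients satisfy the same basic assumptions, $\partial_t g^u\equiv0$, and the constant $\kappa_2$ of~(\ref{kappa_2}) is a supremum over all admissible uncertainties), the right inequality of~(\ref{bijective map estimates derivative}) gives $\|w_t(\cdot,t_k)\|_{\infty}\leq\kappa_2\bar{\delta}$. The standing a-priori assumption of Section~\ref{sec:robustness w_t} gives $\|w(\cdot,t_k)\|_{\infty}\leq1.5\,\kappa_1^2\,\|w_0\|_{\infty}=c$. Then the measurement-error bounds~(\ref{error bound w}) and~(\ref{error bound w_t}) together with $\epsilon_w,\epsilon_{w_t}\leq1$ from~(\ref{epsilonw<1}) yield $\|W_k\|_{\infty}\leq2c$ and $\|\tfrac{\partial}{\partial t}W_k\|_{\infty}=\|\tilde{w}_t^{k}(\cdot,t_k)\|_{\infty}\leq2\kappa_2\bar{\delta}$ (recall $\tfrac{\partial}{\partial t}W_k$ is $W_k$ substituted into the nominal~(\ref{wtilde_t})).

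\emph{Step 2 (existence and state bound on $\tilde{\D}_k$).} The prediction $\tilde{w}^{k}$ solves the nominal system~(\ref{wtilde_t})--(\ref{wtildeIC}); exactly as in Theorem~\ref{thm:determinate set 1}, on $\tilde{\D}_k$ (see~(\ref{D tilde})) its value is independent of $U$, and the time-extent $\tilde{\tau}_k-t_k=\int_0^1(\lambda^v)^{-1}$ is at most $l_{\Lambda^{-1}}$. Running the local existence and iterative continuation argument of Theorem~\ref{thm semiglobal solution} from the Lipschitz datum $W_k$ (at most one reflection at $x=0$ within a time-span of length $l_{\Lambda^{-1}}$), the state norm is multiplied by at most the factor $\kappa_1$ of~(\ref{kappa_1}), so $\|\tilde{w}^{k}\|_{\infty}\leq2\kappa_1 c$ on $\tilde{\D}_k$ for as long as the solution exists; the only obstruction to existence over all of $\tilde{\D}_k$ is finite-time blow-up of $\tilde{w}_t^{k}$, which is excluded in Step~3.

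\emph{Step 3 (bounding $\tilde{w}_t^{k}$ via Lemma~\ref{lemma bound quadratic}).} Write the integral equations~(\ref{u_ts})--(\ref{v_ts}) for $\tilde{w}^{k}$ with the nominal coefficients and let $\gamma$ be defined as in~(\ref{gamma_1}) but with the supremum over $\|z\|\leq2\kappa_1 c$. Along the characteristic realising $\operatorname*{ess\,sup}_{\tilde{\D}_k}\|\tilde{w}_t^{k}\|_{\infty}$, the initial/boundary datum is either $\tfrac{\partial}{\partial t}W_k$ (bounded by $2\kappa_2\bar{\delta}$) or a single boundary reflection $\tilde{u}_t^{k}(0,\cdot)=\partial_v g^u\cdot\tilde{v}_t^{k}(0,\cdot)$ (bounded by $l_{g^u}|\tilde{v}_t^{k}(0,\cdot)|$, using $\partial_t g^u\equiv0$), so it is at most $\max\{1,l_{g^u}\}\,2\kappa_2\bar{\delta}$. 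Setting
\begin{equation}
\bar{\delta}^{\mathrm{max}}_1=\frac{1}{2\max\{1,l_{g^u}\}\,\kappa_2}\,e^{-2\gamma l_{\Lambda^{-1}}}, \label{delta bar max 1}
\end{equation}
the hypothesis $\bar{\delta}\leq\bar{\delta}^{\mathrm{max}}_1$ makes the smallness condition~(\ref{eq bound quadratic IC}) hold on $\tilde{\D}_k$ (the factor $2$ in the exponent absorbing the single reflection, exactly as in Theorem~\ref{thm:determinate set 1}), so Lemma~\ref{lemma bound quadratic} applied to $\alpha(t)=\|\tilde{w}_t^{k}(\cdot,t)\|_{\infty}$ yields
\begin{equation}
\operatorname*{ess\,sup}_{(x,t)\in\tilde{\D}_k}\|\tilde{w}_t^{k}(x,t)\|_{\infty}\leq2\max\{1,l_{g^u}\}\,\kappa_2\,e^{2\gamma l_{\Lambda^{-1}}}\,\bar{\delta}=:\kappa_3\,\bar{\delta}. \label{kappa_3}
\end{equation}
This simultaneously closes Step~2 (no blow-up, hence existence on all of $\tilde{\D}_k$) and, since $(0,t)\in\tilde{\D}_k$ for every $t\in[t_k,\tilde{\tau}_k]$, establishes~(\ref{vtilde_t < deltatilde}).

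\emph{Main obstacle.} The delicate point is the circular dependence between \emph{existence} of $\tilde{w}^{k}$ on all of $\tilde{\D}_k$ and the a-priori bounds used to obtain it: one must run the continuation argument so that the state bound of Step~2 and the derivative bound of Step~3 hold uniformly up to the terminal ``time'' $x=1$ (equivalently, so that $\tilde{\D}_k$ and the characteristics $\tilde{\tau}^v_k$ are themselves well defined, which needs $\lambda^v$ bounded below and above along the trajectory), and one must check that $\kappa_1$, $\kappa_2$ and $\gamma$, hence $\kappa_3$ and $\bar{\delta}^{\mathrm{max}}_1$, can be chosen uniformly over the admissible uncertainty set rather than depending on the particular $\bar{\Lambda},\bar{F},\bar{g}^u$.
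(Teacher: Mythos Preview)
Your proof is correct and follows essentially the same route as the paper's own argument: transfer the bound on $v_t(0,\cdot)$ to $\|w_t(\cdot,t_k)\|_{\infty}$ via Lemma~\ref{lemma bijective map}, pass through the measurement error using~(\ref{epsilonw<1}), then rerun the Theorem~\ref{thm:determinate set 1}-style existence/continuation argument on the nominal prediction over $\tilde{\D}_k$; your constants $\bar{\delta}^{\mathrm{max}}_1$ and $\kappa_3$ coincide exactly with the paper's (where your $\gamma$ is the paper's $\tilde{\gamma}_1$). One minor slip: it is the \emph{left} inequality of~(\ref{bijective map estimates derivative}), not the right, that gives $\|w_t(\cdot,t_k)\|_{\infty}\leq\kappa_2\bar{\delta}$.
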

\begin{proof} See Appendix \ref{proof appendix 1}.
%The bound $\epsilon_{w}\leq 1$ implies $\|\tilde{w}(t_k;\cdot,t_k)\|_{\infty}\leq 2c$, which implies the a-priori bound $\sup_{(x,t)\in\D(t_k)}\|\tilde{w}(t_k;x,t)\|_{\infty}\leq 2c\kappa_1$. Let $\tilde{\gamma}_1$ be defined as in (\ref{gamma_1}) but with the supremum taken over $2\kappa_1c$. As in Theorem \ref{thm:determinate set 1} (note that $\partial_t g^u=0$ is assumed), 
%\begin{equation}
%\|\tilde{w}_t(t_k;\cdot,t_k)\|_{\infty}\leq \frac{1}{\max\{1,l_{g^u}\}}e^{-\tilde{\gamma}_1 l_{\Lambda^{-1}}} \label{eq hlp_0}
%\end{equation}
%is sufficient to ensure existence of  $\tilde{w}(t_k;\cdot,\cdot)$ on $\D(t_k)$. Because of (\ref{error bound w_t}) and  $\epsilon_{w_t}\leq 1$, (\ref{eq hlp_0}) can be  ensured by  
%\begin{equation}
%\|{w}_t(\cdot,t_k)\|_{\infty}\leq \frac{1}{2\,\max\{1,l_{g^u}\}}e^{-\tilde{\gamma}_1 l_{\Lambda^{-1}}}. \label{eq hlp_00}
%\end{equation}
%By virtue of Lemma \ref{lemma bijective map}, (\ref{eq hlp_00}) is ensured by
%\begin{equation}
%\bar{\delta}^{\mathrm{max}}_1=\frac{1}{2\,\kappa_2\,\max\{1,l_{g^u}\}}e^{-\tilde{\gamma}_1 l_{\Lambda^{-1}}}. \label{delta_bar_max_1}
%\end{equation}
%A conservative estimate of $\kappa_3$ is 
%\begin{equation}
%\kappa_3=2\kappa_2\max\{1,l_{g^u}\}e^{2\tilde{\gamma}_1 l_{\Lambda^{-1}}}.
%\end{equation}
\end{proof}

\subsubsection{Bound on $|\partial_t U|$}
Similar to (\ref{v_t integral2}), $v_t$ can be integrated along its characteristic line in the \emph{negative} $x$-direction, which  gives
\begin{align}
v_t(x,\bar{\tau}^v(t;x)) = & v_t(1,t) + \int_{x}^1 \frac{1}{\bar{\lambda}^v} \left[\bar{c}_5u_t v_t + \bar{c}_6 (v_t)^2 \right. \nonumber \\
& \qquad \left. + \bar{c}_7 u_t  + \bar{c}_8 v_t   \right](\xi,\bar{\tau}^v(t;\xi))  \, d\xi \label{v_t integral3}
\end{align} 
with boundary condition
%\begin{equation}
%v_t(1,t) = \partial_U \bar{U} \,U_t(t)  + \partial_t \bar{U} + \partial_u \bar{g}^v \,u_t(1,t) + \partial_t \bar{g}^v.  \label{v_t integral3 BC}
%\end{equation}
\begin{equation}
v_t(1,t) = \partial_U \bar{U}(U(t)) \,U_t(t) + \partial_u \bar{g}^v(u(1,t)) \,u_t(1,t).  \label{v_t integral3 BC}
\end{equation}
The following Lemma gives a conservative condition on the size of $|U_t|$ ensuring that $|v_t(0,t)|\leq \bar{\delta}$, based on the worst-case growth along the characteristic lines alone.

\begin{lemma}\label{lemma bound v_t 1}
Assume $|v_t(0,t)|\leq \bar{\delta}$ for some $\bar{\delta}\leq  \min\left\{\bar{\delta}^{\mathrm{max}}_1,\, \frac{\|w_0\|_{\infty}}{\theta+l_{\Lambda^{-1}}}\right\}$ and all $t\in[\bar{\tau}^u(0;0),\bar{\tau}_k]$.
If
\begin{equation}
|\partial_t U(t)| \leq \frac{2}{3}\,e^{-4\gamma_4}\,\bar{\delta}\label{condition 8}
\end{equation}
for all $t\in[t_k,t_{k+1}]$, then 
\begin{equation}
|v_t(0,t)|\leq \bar{\delta}
\end{equation}
for all $t\in[\bar{\tau}_k,\bar{\tau}_{k+1}]$.
\end{lemma}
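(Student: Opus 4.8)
\textbf{Proof plan for Lemma \ref{lemma bound v_t 1}.}
The plan is to integrate $v_t$ along the characteristic line $(\xi,\bar{\tau}^v(t;\xi))$ of the \emph{actual} uncertain system in the negative $x$-direction, starting from $x=1$, using the integral equation (\ref{v_t integral3}) together with the boundary data (\ref{v_t integral3 BC}) at $x=1$. At $x=0$ this yields exactly $v_t(0,\bar\tau^v(t;0))$, and as $t$ ranges over $[t_k,t_{k+1}]$ the time $\bar\tau^v(t;0)$ ranges over $[\bar\tau_k,\bar\tau_{k+1}]$, so controlling the right-hand side of (\ref{v_t integral3}) at $x=0$ over this range of $t$ is exactly what is needed. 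The first step is to bound the boundary term (\ref{v_t integral3 BC}): using $|\partial_U\bar U|\leq 1+\epsilon_U\leq 5/4$ from (\ref{error bound U}) and $|\partial_u\bar g^v|\leq\epsilon_{g^v}$, we get $|v_t(1,t)|\leq \frac54|\partial_t U(t)| + \epsilon_{g^v}|u_t(1,t)|$. The quantity $|u_t(1,t)|$ must itself be controlled; since the characteristic line of $u$ through $(1,t)$ enters the region over which we already have the hypothesis $|v_t(0,\cdot)|\le\bar\delta$ (and the prediction/earlier-step bounds), one invokes the a-priori bound on $\|w_t\|$ in this strip — analogous to (\ref{bound w_t on D(0)}) and Lemma \ref{lemma bijective map} — to get $|u_t(1,t)|\lesssim\kappa_2\bar\delta$ or similar; then condition (\ref{condition 7}) on $\epsilon_{g^v}$ (namely $\epsilon_{g^v}\le\frac{1}{6\kappa_2}e^{-4\gamma_4}$) makes the boundary contribution from $g^v$ at most, say, $\frac16 e^{-4\gamma_4}\bar\delta$, so that $|v_t(1,t)|\leq \frac54\cdot\frac23 e^{-4\gamma_4}\bar\delta + \frac16 e^{-4\gamma_4}\bar\delta = e^{-4\gamma_4}\bar\delta$ (the constants in (\ref{condition 8}) are evidently tuned so this works out).

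The second step is the Gronwall-type argument on the integral from $x=1$ down to $x=0$. Write $\beta(\xi)=|v_t(\xi,\bar\tau^v(t;\xi))|$ and $a(\xi)=|u_t(\xi,\bar\tau^v(t;\xi))|$; from (\ref{v_t integral3}),
\begin{equation}
\beta(\xi)\leq |v_t(1,t)| + \int_\xi^1 \left[\left|\frac{\bar c_5}{\bar\lambda^v}\right| a(\eta)\beta(\eta) + \left|\frac{\bar c_6}{\bar\lambda^v}\right|\beta(\eta)^2 + \left|\frac{\bar c_7}{\bar\lambda^v}\right| a(\eta) + \left|\frac{\bar c_8}{\bar\lambda^v}\right|\beta(\eta)\right] d\eta.
\end{equation}
The terms carrying $a(\eta)=|u_t|$ are the ones controlled by condition (\ref{condition 2}): using the a-priori bound $a(\eta)\leq \kappa_2\cdot(\text{const})\bar\delta$ along this line together with $\sup|\bar c_5/\bar\lambda^v|\,\kappa_2\leq e^{-4\gamma_4}\gamma_4$ and likewise for $\bar c_7$, each of the two $u_t$-terms contributes at most $e^{-4\gamma_4}\gamma_4\bar\delta$ times the length of integration ($\leq l_{\Lambda^{-1}}\le 1$ after normalizing, or absorbed into $\gamma_4$). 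The terms carrying only powers of $\beta$ are controlled by (\ref{condition 1}): $|\bar c_6/\bar\lambda^v|,|\bar c_8/\bar\lambda^v|\leq\gamma_4$. One is thus left with an inequality of the form $\beta(\xi)\leq \beta_1 + \int_\xi^1\gamma_4(\beta(\eta)^2+\beta(\eta))\,d\eta + (\text{small constant})\bar\delta$, where $\beta_1=|v_t(1,t)|\leq e^{-4\gamma_4}\bar\delta$. Since $\beta_1$ (plus the small constant term) is $\leq e^{-\gamma_4}$ when $\bar\delta$ is below the stated threshold (here the hypothesis $\bar\delta\leq \|w_0\|_\infty/(\theta+l_{\Lambda^{-1}})$ and $\bar\delta\le\bar\delta^{\mathrm{max}}_1$ guarantee $\bar\delta$ is suitably small), Lemma \ref{lemma bound quadratic} applies with $\gamma=\gamma_4$, $T=1$ (the length of the $x$-interval), and $\alpha=\beta$, giving $\beta(0)\leq \beta_1 e^{2\gamma_4}\leq e^{-4\gamma_4}\bar\delta\cdot e^{2\gamma_4} = e^{-2\gamma_4}\bar\delta\leq\bar\delta$. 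This is precisely $|v_t(0,\bar\tau^v(t;0))|\leq\bar\delta$ for all $t\in[t_k,t_{k+1}]$, i.e.\ $|v_t(0,s)|\leq\bar\delta$ for all $s\in[\bar\tau_k,\bar\tau_{k+1}]$.

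The main obstacle I anticipate is the careful bookkeeping in the first step — obtaining a rigorous a-priori bound on $|u_t(1,t)|$ (and on $|u_t|$ along the integration path in the second step) in terms of $\bar\delta$ with an explicit constant. This requires tracing the $u$-characteristic through $(1,t)$ back into the strip $t\in[\bar\tau^u(0;0),\bar\tau_k]$ where the hypothesis $|v_t(0,\cdot)|\le\bar\delta$ holds, applying the $w_t$-propagation bound (Theorem \ref{thm:determinate set 1}-style estimate, or Lemma \ref{lemma bijective map}) to convert a bound on $|v_t(0,\cdot)|$ into a bound on $\|w_t\|$ hence on $|u_t(1,t)|$, and checking that the measurement-error contribution at the sampling instant $t_k$ (controlled via Lemma \ref{lemma robust existence wtilde_t} and $\epsilon_{w_t}\le 1$) does not spoil the constant — and then verifying that the factor $\kappa_2$ thereby introduced is exactly the one that appears in conditions (\ref{condition 2}) and (\ref{condition 7}), so the numbers close up. The quadratic-growth part of the argument, by contrast, is a routine application of Lemma \ref{lemma bound quadratic} once the constants $\gamma_4$, $e^{-4\gamma_4}$ are in place; the $\frac23$ and $\frac16$ fractions in (\ref{condition 8}) and (\ref{condition 7}) are manifestly chosen so that $\frac54\cdot\frac23+\frac16=1$ and the final $e^{2\gamma_4}$ from the Gronwall step is absorbed by the $e^{-4\gamma_4}$ safety margin.
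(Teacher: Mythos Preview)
Your overall strategy matches the paper: integrate (\ref{v_t integral3}) backward from $x=1$, bound the boundary data (\ref{v_t integral3 BC}) using (\ref{condition 7})--(\ref{condition 8}) together with an a~priori bound on $|u_t|$, then close with a quadratic Gronwall estimate. Your boundary computation $\tfrac54\cdot\tfrac23+\tfrac16=1$, giving $|v_t(1,t)|\leq e^{-4\gamma_4}\bar\delta$, is exactly what the paper obtains; and the paper supplies the bound $|u_t(x,\bar\tau^v(t;x))|\leq\kappa_2\bar\delta$ as a separate ancillary lemma (a one-sided version of (\ref{bijective map estimates derivative}) on the relevant determinate set), just as you anticipate.

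The gap is in your second step. After inserting $|u_t|\leq\kappa_2\bar\delta$ and condition (\ref{condition 2}), the term $|\bar c_7/\bar\lambda^v|\,a(\eta)$ becomes a \emph{constant} $\leq e^{-4\gamma_4}\gamma_4\bar\delta$ in the integrand, and the cross term $|\bar c_5/\bar\lambda^v|\,a(\eta)\beta(\eta)$ contributes an \emph{additional linear} coefficient $e^{-4\gamma_4}\gamma_4\bar\delta$. Lemma~\ref{lemma bound quadratic} is stated only for integrands of the pure form $\gamma(|\alpha|^2+|\alpha|)$---no constant term, one coefficient---so your sentence ``Lemma~\ref{lemma bound quadratic} applies with $\gamma=\gamma_4$, $T=1$, $\alpha=\beta$'' does not go through, and the final line $\beta(0)\leq\beta_1e^{2\gamma_4}=e^{-2\gamma_4}\bar\delta$ silently drops both extra contributions.

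The paper resolves this not by a direct appeal to Lemma~\ref{lemma bound quadratic} on $\beta$, but with a comparison-function device. Define $\varphi$ by $\varphi(x)=e^{-4\gamma_4}\bar\delta+\int_x^1 2\gamma_4(\varphi+\varphi^2)\,d\xi$, so that $\varphi(0)\leq\bar\delta$ by Lemma~\ref{lemma bound quadratic} with $\gamma=2\gamma_4$. The key observation is that the $u_t$-bounds give
\[
\left|\frac{\bar c_5}{\bar\lambda^v}\,u_t\right|,\ \left|\frac{\bar c_7}{\bar\lambda^v}\,u_t\right|\ \leq\ e^{-4\gamma_4}\gamma_4\bar\delta\ =\ \gamma_4\varphi(1)\ \leq\ \gamma_4\varphi(x),
\]
i.e.\ the $u_t$-terms are bounded by the \emph{$x$-dependent} quantity $\gamma_4\varphi(x)$ rather than a constant. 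The integral inequality for $\beta$ then reads $\beta(\xi)\leq\varphi(1)+\int_\xi^1\gamma_4[\varphi\beta+\beta^2+\varphi+\beta]\,d\eta$; since substituting $\beta\to\varphi$ turns the right-hand side into exactly $\varphi(\xi)$, monotonicity of the solution in its initial value yields $\beta\leq\varphi$, hence $\beta(0)\leq\varphi(0)\leq\bar\delta$. This comparison with $\varphi$ is the missing ingredient in your plan.
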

\begin{proof} See Appendix \ref{proof appendix 2}.
\end{proof}

\subsubsection{Bound on $\delta$}

Again based on a worst-case estimate of the growth of $\tilde{v}_t^{k}(x,t)$ along its characteristic lines, one can ensure that (\ref{condition 8}) holds  by designing $|\partial_t U^*(t_k;\cdot)|$ to be sufficiently small. 

\begin{lemma}\label{lemma bound v_t 2}
 Choosing $\delta$ such that
\begin{equation}
\delta \leq \delta^{\mathrm{max}}\left(\bar{\delta}\right) =  \frac{2}{3}e^{-4(\gamma_4+\gamma_5)}\bar{\delta}   \label{delta rob}
%\tilde{\delta} \leq \frac{\delta}{5}\exp(-8\kappa_3)
\end{equation}
ensures that $U$ as constructed by Algorithm \ref{algorithm_statefeedback} satisfies (\ref{condition 8}).
\end{lemma}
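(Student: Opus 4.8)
The plan is to read $\partial_t U(t)$ off the target system (\ref{target 1})--(\ref{target last}) and bound it by integrating the $\tilde v_{\partial t}$-equation (\ref{target 4}) across the spatial domain, in the spirit of Lemma \ref{lemma bound v_t 1} but now with the \emph{nominal} coefficients and with the left boundary data supplied by the virtual input $U^{*,k}$. Since $\tau^v(t;1)=t$, (\ref{U(t) exact}) gives $U(t)=\tilde v^{*,k}(1,t)=\tilde v^k(1,\tau^v(t;1))=\tilde v^k(1,t)$, and differentiating in $t$ yields $\partial_t U(t)=\tilde v^{*,k}_{\partial t}(1,t)$. Hence it suffices to show $|\tilde v^{*,k}_{\partial t}(1,t)|\le\tfrac{2}{3}e^{-4\gamma_4}\bar{\delta}$ for $t\in[t_k,t_{k+1}]$, which is exactly (\ref{condition 8}).

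First I would pin down the size of the forcing term $\tilde u^{*,k}_{\partial t}$ appearing in (\ref{target 4}). Existence of the target solution on $\mathcal{A}(t_k)$ is already available (it is the system introduced around (\ref{target 1})--(\ref{target last}) and solved in Algorithm \ref{algorithm_statefeedback}). Under the running hypotheses of Section \ref{sec:robustness w_t} — in particular $|v_t(0,\cdot)|\le\bar{\delta}$ on $[\bar{\tau}^u(0;0),\bar{\tau}_k]$ with $\bar{\delta}\le\bar{\delta}^{\mathrm{max}}_1$ — Lemma \ref{lemma robust existence wtilde_t} gives existence of $\tilde w^k$ on $\tilde{\D}_k$ with $|\tilde v^k_t(0,t)|\le\kappa_3\bar{\delta}$ for $t\in[t_k,\tilde{\tau}_k]$; propagating this along the nominal characteristics, as in the proofs of Theorems \ref{thm:determinate set 1} and \ref{thm: global existence condition} (using $\partial_t g^u=0$), bounds $\|\tilde w^k_t\|_\infty$ on $\tilde{\D}_k$, and in particular the initial datum $\tilde u^{*,k}_{\partial t}(\cdot,t_k)=\tilde u^k_t(\cdot,\tau^v(t_k;\cdot))$ on the upper boundary of $\tilde{\D}_k$, by $\kappa_2\kappa_3\bar{\delta}$. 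Combining this with the boundary datum (\ref{target uBC}), of modulus at most $l_{g^u}\delta\le\kappa_2\kappa_3\bar{\delta}$, and applying Lemma \ref{lemma bound quadratic} to the integral form of the evolution (\ref{target 3}) over the finite transit time spanned by $\mathcal{A}(t_k)$ — shrinking $\bar{\delta}^{\mathrm{max}}$ if necessary so that (\ref{eq bound quadratic IC}) holds — yields $\|\tilde u^{*,k}_{\partial t}\|_\infty\le\kappa_2\kappa_3\bar{\delta}$ on $\mathcal{A}(t_k)$.

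The second step integrates (\ref{target 4}) in the positive $x$-direction, from the boundary value (\ref{target v_t BC}), $|\tilde v^{*,k}_{\partial t}(0,t)|=|\partial_t U^{*,k}(\tau^v(t;0))|\le\delta$, to $x=1$. Writing $\beta(x)=\sup_{t\in[t_k,t_{k+1}]}|\tilde v^{*,k}_{\partial t}(x,t)|$, condition (\ref{condition 11}) bounds the coefficients of the $(\tilde v^{*,k}_{\partial t})^2$- and $\tilde v^{*,k}_{\partial t}$-terms by $\gamma_5$, while condition (\ref{condition 12}), evaluated with the forcing level $\kappa_2\kappa_3\bar{\delta}$ from the previous step, bounds the contributions of the $c_5\tilde u^{*,k}_{\partial t}\tilde v^{*,k}_{\partial t}$- and $c_7\tilde u^{*,k}_{\partial t}$-terms by $e^{-4\gamma_5}\gamma_5\bar{\delta}$. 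This produces a quadratic integral inequality of the form $\beta(x)\le\delta+\int_0^x 2\gamma_5\big(\beta(\xi)^2+\beta(\xi)\big)\,d\xi$ (the tiny constant forcing absorbed by a small shift of $\beta$ or directly into the additive term). Using $\delta\le\delta^{\mathrm{max}}(\bar{\delta})=\tfrac{2}{3}e^{-4(\gamma_4+\gamma_5)}\bar{\delta}$ together with smallness of $\bar{\delta}$ to verify the Gronwall initial condition $\delta\le e^{-2\gamma_5}$, Lemma \ref{lemma bound quadratic} with $\gamma=2\gamma_5$ and $T=1$ gives $\beta(1)\le\delta\,e^{4\gamma_5}\le\tfrac{2}{3}e^{-4\gamma_4}\bar{\delta}$, which is (\ref{condition 8}); the loss factor $e^{4\gamma_5}$ over $x\in[0,1]$ is precisely the gap between $\delta^{\mathrm{max}}(\bar{\delta})$ in (\ref{delta rob}) and the target.

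I expect the main obstacle to be the first step: showing that the nominal prediction, and its time derivative, genuinely have the claimed size on the two-dimensional region $\mathcal{A}(t_k)$, which lies \emph{above} the determinate set $\tilde{\D}_k$ on which Lemma \ref{lemma robust existence wtilde_t} provides control. One must carefully track the boundary-reflection factor $\max\{1,l_{g^u}\}$, check that each $u$-characteristic of the prediction entering $\mathcal{A}(t_k)$ carries either the small data on $\{x=0\}$ prescribed by $U^{*,k}$ (modulus $\le l_{g^u}\delta$) or the small data inherited from the upper boundary of $\tilde{\D}_k$, and absorb the nonlinear growth of $\tilde u^{*,k}_{\partial t}$ via the quadratic-Gronwall estimate, all while keeping constants small enough to close against (\ref{condition 11})--(\ref{condition 12}). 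The remaining estimates are the routine quadratic-Gronwall manipulations already used for Lemma \ref{lemma bound v_t 1}.
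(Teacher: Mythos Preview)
Your proposal is correct and follows essentially the same approach that the paper intends: mirror the proof of Lemma~\ref{lemma bound v_t 1}, but for the nominal target system (\ref{target 1})--(\ref{target last}), reading $\partial_t U(t)=\tilde v^{*,k}_{\partial t}(1,t)$ and integrating (\ref{target 4}) from $x=0$ to $x=1$ with the boundary datum $|\tilde v^{*,k}_{\partial t}(0,t)|\le\delta$, using (\ref{condition 11})--(\ref{condition 12}) in place of (\ref{condition 1})--(\ref{condition 2}) and the quadratic Gronwall Lemma~\ref{lemma bound quadratic} to pick up the factor $e^{4\gamma_5}$.

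One simplification for your first step: rather than propagating $\tilde u^{*,k}_{\partial t}$ via (\ref{target 3}) and a second application of Lemma~\ref{lemma bound quadratic} (which, as you suspect, would cost an extra exponential factor you do not have room for), it is cleaner to argue exactly as in the ancillary lemma preceding the proof of Lemma~\ref{lemma bound v_t 1}. Since $\tilde u^{*,k}_{\partial t}(x,t)=\tilde u^k_t(x,\tilde\tau^v_k(t;x))$ and, by construction of $U^{*,k}$ together with Lemma~\ref{lemma robust existence wtilde_t}, one has $|\tilde v^k_t(0,s)|\le\kappa_3\bar{\delta}$ for all $s\in[t_k,\tilde\tau_{k+1}]$, the determinate-set estimate of type (\ref{condition 3}) applied to the nominal prediction $\tilde w^k$ directly yields $|\tilde u^{*,k}_{\partial t}|\le\kappa_2\kappa_3\bar{\delta}$ on all of $\mathcal{A}(t_k)$, which is precisely the level at which (\ref{condition 12}) is calibrated. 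This avoids the bookkeeping you flag as the main obstacle.
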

\begin{proof} 
The proof is similar to the proof of Lemma \ref{lemma bound v_t 1}  and uses (\ref{condition 11})-(\ref{condition 12}). 
\end{proof}

\subsection{Error bounds} \label{sec: error bounds}
In the following we derive  bounds on the state prediction error, the error between inputs computed by use of nominal and actual parameters, and the resulting error in the boundary value $v(0,\cdot)$. Throughout section \ref{sec: error bounds}, we  again fix  the sampling instance $k\in\N$ and  assume  the  a-priori bound $\sup_{t\in[0,\bar{\tau}_{k+1}]}|v(0,t)|\leq 1.5\,\kappa_1\,\|w_0\|_{\infty}$. %Note that Lemma \ref{lemma bound v_t 2} provides a sufficient condition on $\delta$ to ensure  the a priori bound 
%\begin{equation}
%\|w_t\|_{\infty} \leq \kappa_2\, \bar{\delta},
%\end{equation}
%which can be made arbitrarily small by making $\bar{\delta}$ small.

\subsubsection{Prediction error}

\begin{lemma}\label{lemma: prediction error}
Choose $\delta$ as in $(\ref{delta rob}$) for some $\bar{\delta} \leq \bar{\delta}^{\mathrm{max}}$ with
\begin{equation}
\bar{\delta}^{\mathrm{max}} =  \frac{1}{2}\min\left\{\frac{1}{\kappa_2\,l_{\Lambda^{-1}}^2\, l_{\Lambda}},\,\frac{e^{-2\tilde{\gamma}_1 l_{\Lambda^{-1}}}}{\kappa_2\,\max\{1,l_{g^u}\}},\, \frac{\|w_0\|_{\infty}}{\theta+l_{\Lambda^{-1}}}\right\}. \label{delta bar max}
\end{equation}
% $ \delta_1<(\kappa_2\,l_{\Lambda^{-1}}^2\, l_{\Lambda})^{-1}$, 
There exists  a constant $\kappa_4>0$  such that for all $x\in[0,1]$,
%\begin{equation}\begin{aligned}
%\|w&(x,\bar{\tau}^v(t_k;x))-\tilde{w}(x,{\tau}^v(t_k;x))\|   \\
%& \leq \kappa_4\,(\epsilon_w+\epsilon_{\Lambda}+\epsilon_F)\, \|w(\cdot,t_k)\|_{\infty}\, \exp(\kappa_5\, d^v(t_k))  \label{bound prediction error}
%\end{aligned}\end{equation}
\begin{equation}\begin{aligned}
\|w(x,\bar{\tau}^v(t_k;x))&-\tilde{w}^{k}(x,\tilde{\tau}_k^v(t_k;x))\|   \\
 \leq \kappa_4&\,(\epsilon_w+\epsilon_{\Lambda}+\epsilon_F+\epsilon_{g^u})\, \|w(\cdot,t_k)\|_{\infty}. \label{bound prediction error}
\end{aligned}\end{equation}
\end{lemma}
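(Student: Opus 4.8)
\emph{Proof strategy.} Both states appearing in (\ref{bound prediction error}) lie on the top boundaries of the determinate sets $\bar{\D}_k$ and $\tilde{\D}_k$, so by the determinate-set property (Theorems \ref{thm:determinate set 1}--\ref{thm:determinate set 2}, applied over a time-horizon of length at most $l_{\Lambda^{-1}}$) the curve $w(\cdot,\bar{\tau}^v(t_k;\cdot))$ is uniquely determined by $w(\cdot,t_k)$ and the \emph{uncertain} model $(\bar{\Lambda},\bar{F},\bar{g}^u)$, whereas $\tilde{w}^k(\cdot,\tilde{\tau}^v_k(t_k;\cdot))$ is uniquely determined by the measurement $W_k$ and the \emph{nominal} model $(\Lambda,F,g^u)$; in particular the control input $U$ is irrelevant, which is why $\epsilon_U$ and $\epsilon_{g^v}$ do not appear in (\ref{bound prediction error}). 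The plan is to propagate the four discrepancies -- the measurement mismatch at time $t_k$ (of size at most $\epsilon_w\|w(\cdot,t_k)\|_\infty$, by (\ref{error bound w})), the speed mismatch (at most $\epsilon_\Lambda\|z\|_\infty$, by (\ref{error bound Lambda})), the source mismatch (at most $\epsilon_F\|z\|_\infty$, by (\ref{error bound F})), and the reflection mismatch $|\bar{g}^u(z)-g^u(z)|\le\epsilon_{g^u}\|z\|_\infty$ -- along the characteristic lines that foliate $\bar{\D}_k$ and $\tilde{\D}_k$, and to close with a Gronwall inequality in $x\in[0,1]$. Since on $\bar{\D}_k$ the state norm is bounded by a fixed multiple of $\kappa_1\|w(\cdot,t_k)\|_\infty$ (the argument of Theorem \ref{thm:determinate set 1} with $w_0$ replaced by $w(\cdot,t_k)$), and similarly $\|\tilde{w}^k\|_\infty$ on $\tilde{\D}_k$ is at most $\kappa_1\|W_k\|_\infty\le 2\kappa_1\|w(\cdot,t_k)\|_\infty$ (using $\|W_k\|_\infty\le(1+\epsilon_w)\|w(\cdot,t_k)\|_\infty$ and $\epsilon_w\le1$), every $\|z\|_\infty$ appearing above is a bounded multiple of $\|w(\cdot,t_k)\|_\infty$, which yields the claimed proportionality.

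\emph{Step 1: mismatch of the characteristic lines.} First I would bound $|\bar{\tau}^v(t_k;x)-\tilde{\tau}^v_k(t_k;x)|$, and the analogous quantities for the $u$-characteristics passing through the target points, by subtracting the defining integral equations (\ref{tau^v bar}) and (\ref{tau^v tilde}) and their $u$-counterparts. The difference is driven by the speed mismatch $\epsilon_\Lambda$ and by $l_\Lambda$ times the pointwise state difference; Gronwall in $x$ over an interval of length at most $1$ then yields a bound of order $l_{\Lambda^{-1}}^2\big(\epsilon_\Lambda\|w(\cdot,t_k)\|_\infty+l_\Lambda\sup_\xi\|w-\tilde{w}^k\|\big)$. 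Because $w$ and $\tilde{w}^k$ are compared at the \emph{distinct} space--time points $(x,\bar{\tau}^v(t_k;x))$ and $(x,\tilde{\tau}^v_k(t_k;x))$, this step also repeatedly converts a difference in evaluation time into $\|w_t\|_\infty$ times that time-difference. Here the hypothesis $\bar{\delta}\le\bar{\delta}^{\mathrm{max}}$ with $\bar{\delta}^{\mathrm{max}}$ as in (\ref{delta bar max}) is essential: by (\ref{bijective map estimates derivative}) together with the $w_t$-bounds of Section \ref{sec:robustness w_t} (Lemmas \ref{lemma bijective map}--\ref{lemma bound v_t 2}) one has $\|w_t\|_\infty\le\kappa_2\bar{\delta}\le\tfrac{1}{2}\,l_{\Lambda^{-1}}^{-2}l_\Lambda^{-1}$ on $\bar{\D}_k$, which is precisely the factor $1/(\kappa_2 l_{\Lambda^{-1}}^2 l_\Lambda)$ visible in (\ref{delta bar max}) and is what keeps the resulting Gronwall constant finite.

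\emph{Step 2: the main Gronwall estimate.} Set $\eta(x)=\|w(x,\bar{\tau}^v(t_k;x))-\tilde{w}^k(x,\tilde{\tau}^v_k(t_k;x))\|$, tracked jointly with the error between $v(0,\cdot)$ and $\tilde{v}^k(0,\cdot)$ on the lateral boundary. Writing $v$ on the target curve as the integral of the source term down its $v$-characteristic from $x=1$ (as in (\ref{v_x 1})), the initial discrepancy at $x=1$ is at most $\epsilon_w\|w(\cdot,t_k)\|_\infty$ (this also absorbs the measurement error at the controlled boundary); writing $u$ at a target point as the integral of the source term along its $u$-characteristic (as in (\ref{u_x 1})), that characteristic originates either on the bottom line $s=t_k$ -- contributing $\le\epsilon_w\|w(\cdot,t_k)\|_\infty$ plus a Lipschitz-in-$x$ term times the characteristic mismatch from Step 1 -- or on the lateral boundary $x=0$, where the reflection contributes $|\bar{g}^u(v(0,\cdot))-g^u(\tilde{v}^k(0,\cdot))|\le\epsilon_{g^u}\|v(0,\cdot)\|_\infty+l_{g^u}\big(|v(0,\cdot)-\tilde{v}^k(0,\cdot)|+\|v_t\|_\infty\,|\bar{\tau}_k-\tilde{\tau}_k|\big)$. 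Collecting terms, $\eta$ obeys a linear Gronwall-type inequality in $x$ whose forcing term is bounded by a fixed multiple of $(\epsilon_w+\epsilon_\Lambda+\epsilon_F+\epsilon_{g^u})\,\|w(\cdot,t_k)\|_\infty$ and whose kernel, of size $\approx l_F l_{\Lambda^{-1}}+l_\Lambda l_{\Lambda^{-1}}\|w_t\|_\infty+\cdots$, is uniformly bounded thanks to Step 1; here the $\epsilon_F$ and the additional $\epsilon_\Lambda$ contributions enter through (\ref{error bound F}), (\ref{error bound Lambda}) and Step 1, and -- exactly as in the proof of Theorem \ref{thm semiglobal solution} -- each $u$-characteristic crosses $x=0$ at most once (the time-horizon being at most $l_{\Lambda^{-1}}$), so the factor $\max\{1,l_{g^u}\}$ amplifies the error at most once. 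Gronwall's inequality then gives (\ref{bound prediction error}), with $\kappa_4$ an explicit constant depending only on $\kappa_1$ and the structural bounds $l_F,l_\Lambda,l_{\Lambda^{-1}},l_{g^u}$.

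\emph{Main obstacle.} I expect the principal difficulty to be the self-referential bookkeeping in Steps 1--2: since $w$ and $\tilde{w}^k$ are never compared at the same space--time point, the error in the \emph{values} feeds on the error in the \emph{characteristic lines} (hence on $\|w_t\|_\infty$), which in turn feeds back on the value error. Making this loop contractive is precisely what forces the quantitative smallness in (\ref{delta bar max}) -- the quadratic factor $l_{\Lambda^{-1}}^2$ reflecting that the characteristic mismatch is itself $O(l_{\Lambda^{-1}})$ and then re-enters the transport speed -- and hence the restrictions on $\delta$ through (\ref{delta rob}) and on $\|w_t(\cdot,0)\|_\infty$ in the hypothesis of Theorem \ref{thm:robust convergence}. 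A secondary nuisance is the case split for the origin of the $u$-characteristic through a target point -- the bottom line $s=t_k$ for $x$ near $1$ versus the lateral boundary $x=0$ for $x$ near $0$ -- which must be handled uniformly, as in the case distinctions of the proof of Theorem \ref{thm semiglobal solution} and in the definition of $u_t^{0,x}$.
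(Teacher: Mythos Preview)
Your proposal is correct and identifies the essential mechanism --- the feedback between the value error and the characteristic-line mismatch, closed by the smallness of $\|w_t\|_\infty$ via (\ref{delta bar max}) --- but your decomposition differs from the paper's. The paper first defines the \emph{same-point} error $e^{w,k}(x,t)=\tilde{w}^k(x,t)-w(x,t)$ on the common determinate set $\tilde{\D}_k\cap\bar{\D}_k$, derives a hyperbolic error PDE for it by subtracting (\ref{wtildeerror_t})--(\ref{utildeerrorBC}) from (\ref{wtilde_t})--(\ref{utildeBC}), and runs Gronwall in \emph{time} along the predicted characteristics $\tilde{\xi}^{u}_k,\tilde{\xi}^{v}_k$ to obtain a preliminary bound of the form $\|e^{w,k}(x,t)\|\le \gamma_8(\epsilon_w+\epsilon_\Lambda+\epsilon_F+\epsilon_{g^u})\|w(\cdot,t_k)\|_\infty\,e^{\gamma_9(t-t_k)}$. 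Only afterwards does it bound $|\tilde{\tau}^v_k(t_k;x)-\bar{\tau}^v(t_k;x)|$ (your Step~1) and combine the two via the triangle inequality $\|\tilde{w}^k(x,\tilde{\tau}^v_k)-w(x,\bar{\tau}^v)\|\le \|\tilde{w}^k_t\|_\infty|\tilde{\tau}^v_k-\bar{\tau}^v|+\|e^{w,k}(x,\bar{\tau}^v)\|$. The contraction step you anticipate --- pulling the self-referential $\|\tilde{w}^k_t\|_\infty\|\tilde{\tau}^v_k-\bar{\tau}^v\|$ term to the left --- occurs only in the characteristic-line estimate, not in the main Gronwall, because $l_{\Lambda^{-1}}^2l_\Lambda\|\tilde{w}^k_t\|_\infty\le\tfrac12$ by (\ref{delta bar max}). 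By contrast, you propose to track the \emph{different-point} error $\eta(x)$ directly and run a single Gronwall in $x$ with the characteristic mismatch folded in throughout. This is workable and has the advantage of sidestepping the paper's common-determinate-set restriction and its ``without loss of generality $\tilde{\tau}^v_k>\bar{\tau}^v$'' case split; the price is that your Gronwall kernel carries the extra $\|w_t\|_\infty$-term at every step rather than once at the end, making the bookkeeping heavier. Both routes lead to the same $\kappa_4$ up to inessential constants, and both hinge on exactly the quantitative condition in (\ref{delta bar max}) that you correctly isolate as the main obstacle.
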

\begin{proof} See Appendix \ref{proof appendix 3}.
\end{proof}
\subsubsection{Error in control input}
Next, we derive a bound on the error between the computed control input and the control input that would be obtained by use of the real parameters.

Let $w^{\mathrm{ref}, k}=\left(\begin{matrix}u^{\mathrm{ref}, k} & v^{\mathrm{ref}, k} \end{matrix} \right)^T$ denote the trajectories that are obtained by use of control law as in Algorithm \ref{algorithm_statefeedback} but with the actual, unknown parameters ($\bar{\Lambda}$, $\bar{F}$, etc.) instead of the nominal ones, and let $U^{*,ref}$ be the corresponding virtual input. 
\begin{lemma}\label{lemma: input error}
 Choose $\delta$  as in Lemma \ref{lemma: prediction error}. There exists  $\kappa_5>0$ such that  the error between the reference boundary value $v^{\mathrm{ref}, k}(1,\cdot)$ and the actual boundary value $v(1,\cdot)$ is bounded by 
\begin{equation} \begin{aligned}
%|v(&1,t)-v^{\mathrm{ref}}(t_k;1,t)|\leq \kappa_5\\
%&\,\times (\epsilon_w+\epsilon_{\Lambda}+\epsilon_F+\epsilon_U+\epsilon_{g^u}+\epsilon_{g^v}) \|w(\cdot,t_k)\|_{\infty}  \\
|v(&1,t)-v^{\mathrm{ref}, k}(1,t)|\leq \kappa_5 \,\epsilon_{\mathrm{sum}} \, \|w(\cdot,t_k)\|_{\infty}  \label{bound input error}\\
\end{aligned}\end{equation}
for all $t\in[t_k,t_{k+1}]$,  with $\epsilon_{\mathrm{sum}} $ as defined in (\ref{epsilon_sum}).
 %where
%\begin{equation}
%\epsilon_{\mathrm{sum}} = \epsilon_w+\epsilon_{\Lambda}+\epsilon_F+\epsilon_U+\epsilon_{g^u}+\epsilon_{g^v}.
%\end{equation}
\end{lemma}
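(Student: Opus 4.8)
\textbf{Proof proposal for Lemma \ref{lemma: input error}.}

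The plan is to track how the model uncertainty propagates into the constructed control input $U(t)=\tilde v^{*,k}(1,t)$, by comparing the actual target dynamics (\ref{target 1})--(\ref{target last}), which are solved with the \emph{nominal} parameters $\Lambda,F,g^u$ starting from the \emph{measured/predicted} data, against the same construction carried out with the \emph{true} parameters $\bar\Lambda,\bar F,\bar g^u$ starting from the true state at $t_k$ --- this latter construction yields $v^{\mathrm{ref},k}(1,\cdot)$. Since (\ref{target 1})--(\ref{target last}) is solved backwards in $x$ over $[0,1]\times[t_k,t_{k+1}]$ from the boundary data at $x=0$ given by $U^{*,k}$ (respectively $U^{*,\mathrm{ref}}$), I would split the total error into three contributions: (i) the error in the boundary data at $x=0$, i.e.\ $|U^{*,k}(\tau^v(t;0))-U^{*,\mathrm{ref}}(\bar\tau^v(t;0))|$ and the corresponding error in $\partial_t U^{*,k}$; (ii) the error in the coefficient functions $\lambda^v,c_5,\dots,c_8,\mu,\nu$ appearing in the right-hand sides, which are Lipschitz in their arguments and are being evaluated at $\Lambda$ versus $\bar\Lambda$, $F$ versus $\bar F$ (contributing $O(\epsilon_\Lambda+\epsilon_F)\|w(\cdot,t_k)\|_\infty$ via (\ref{error bound Lambda})--(\ref{error bound F})); (iii) the error in the "frozen" coefficient trajectory $\tilde w^{*,k}(\cdot,t)$ along which those coefficients are evaluated, which is controlled by the prediction error from Lemma \ref{lemma: prediction error}.

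Concretely, I would first use the definition (\ref{U^*}) of $U^{*,k}$, together with the prediction error bound (\ref{bound prediction error}) applied at $x=0$ (which controls $v_0^k=\tilde v^k(0,\tau_k)$ versus its reference counterpart) and a bound on $|\tau^v(t_k;0)-\bar\tau^v(t_k;0)|$ coming from (\ref{error bound Lambda}) and the Lipschitz dependence of (\ref{phi^v}) on $\lambda^v$, to get $|U^{*,k}(\cdot)-U^{*,\mathrm{ref}}(\cdot)|\le C_1\epsilon_{\mathrm{sum}}\|w(\cdot,t_k)\|_\infty$ uniformly; the derivative bound is similar, using that both $|\partial_t U^{*,k}|$ and $|\partial_t U^{*,\mathrm{ref}}|$ are at most $\delta$ by Lemma \ref{lemma bound v_t 2} / Lemma \ref{lemma bound v_t 1}. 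Then I would set up a Gr\"onwall-type estimate in the $x$-variable for the difference of the two solutions of the $\tilde v^{*}_{\partial t}$-equation (\ref{target 4}) and the $\tilde v^*$-equation (\ref{target 2}): the right-hand sides differ by (a) a Lipschitz-in-the-solution term, giving the linear-in-difference term that feeds Gr\"onwall, (b) the coefficient-mismatch term bounded by $O(\epsilon_\Lambda+\epsilon_F)$ times the already-established a-priori bounds on $\|w\|_\infty$ and $\|w_t\|_\infty$ (the latter from Section \ref{sec:robustness w_t}), and (c) the prediction-error term bounded via Lemma \ref{lemma: prediction error}. Integrating from $x=0$ to $x=1$ and absorbing the boundary-data error from step (i) as the initial condition of the Gr\"onwall argument yields a bound of the form $|\tilde v^{*,k}(1,t)-v^{\mathrm{ref},k}(1,t)|\le C\,e^{C'}\,\epsilon_{\mathrm{sum}}\|w(\cdot,t_k)\|_\infty$, and since $v(1,t)=U(t)=\tilde v^{*,k}(1,t)$ by (\ref{U(t) exact}), setting $\kappa_5$ to this constant gives (\ref{bound input error}). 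Finally I would have to add the actuator error: $v(1,t)=\bar U(U(t))+\bar g^v(u(1,t))$ in the true system, so the genuine boundary value differs from $U(t)$ by at most $\epsilon_U|U(t)|+\epsilon_{g^v}|u(1,t)|$, both $O(\epsilon_{\mathrm{sum}}\|w\|_\infty)$ by (\ref{error bound U}) and the a-priori state bound; this is folded into $\kappa_5$.

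The main obstacle I expect is bookkeeping the interdependence between the characteristic-line error and the state error: the times $\tau^v$ versus $\bar\tau^v$ depend on $\lambda^v$ evaluated along $w$, which itself depends on those times, so the estimates for $|\tau^v(t;x)-\bar\tau^v(t;x)|$, $|\tilde w^{*,k}-w^{\mathrm{ref},k}|$, and $|\tilde w^{*,k}_{\partial t}-w^{\mathrm{ref},k}_{\partial t}|$ are coupled and must be closed simultaneously via a single vector-valued Gr\"onwall inequality in $x$ rather than sequentially. The quadratic terms in (\ref{target 4}) are handled, as elsewhere in the paper, by Lemma \ref{lemma bound quadratic}-style arguments together with the smallness conditions (\ref{condition 1})--(\ref{condition 12}) on the coupling coefficients and on $\bar\delta,\delta$, which keep the relevant quantities in the regime where the quadratic growth does not blow up over $x\in[0,1]$; verifying that the constants produced are independent of $k$ (using only the a-priori bound $\sup_{t\le\bar\tau_{k+1}}|v(0,t)|\le 1.5\kappa_1\|w_0\|_\infty$ fixed at the start of Section \ref{sec: error bounds}) is the other point requiring care.
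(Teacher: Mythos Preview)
Your overall decomposition and strategy match the paper's proof closely: both compare the target construction with nominal parameters against the same construction with the true parameters, bound the boundary data at $x=0$ via the prediction error of Lemma~\ref{lemma: prediction error}, and propagate the coefficient mismatch through a Gr\"onwall-type estimate in $x$ to obtain a preliminary bound on $|\tilde v^{*,k}(1,t)-v^{\mathrm{ref},k}(1,t)|$.

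There is, however, a genuine gap in your last step. You write that the actuator-error contribution $\epsilon_U|U(t)|+\epsilon_{g^v}|u(1,t)|$ is ``both $O(\epsilon_{\mathrm{sum}}\|w\|_\infty)$ by \ldots\ the a-priori state bound''. The a-priori bound fixed at the start of Section~\ref{sec: error bounds} is $\|w\|_\infty\leq 1.5\,\kappa_1^2\|w_0\|_\infty$, which scales with $\|w_0\|_\infty$, not with $\|w(\cdot,t_k)\|_\infty$ as the lemma requires. Since $\|w(\cdot,t_k)\|_\infty$ may be arbitrarily small relative to $\|w_0\|_\infty$ once the state has converged, this does not give~(\ref{bound input error}). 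More structurally, $|u(1,t)|$ on $[t_k,t_{k+1}]$ is \emph{not} controlled by $\|w(\cdot,t_k)\|_\infty$ alone: tracing $u$ back along its characteristic and through the boundary reflection at $x=0$, one picks up values of $v(1,\cdot)$ on $[t_k,t_{k+1}]$, which themselves contain the very quantity $|v(1,t)-v^{\mathrm{ref},k}(1,t)|$ you are trying to bound.

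The paper closes this loop explicitly: it bounds $|u(1,t)|\leq \tilde\kappa_8\big(\|w(\cdot,t_k)\|_\infty+\sup_s|v(1,s)|\big)$ and $|v(1,s)|\leq |v^{\mathrm{ref},k}(1,s)|+|v(1,s)-v^{\mathrm{ref},k}(1,s)|$, arriving at an \emph{implicit} inequality of the form
\[
|v(1,t)-v^{\mathrm{ref},k}(1,t)|\leq \epsilon_{g^v}\tilde\kappa_{10}\,|v(1,t)-v^{\mathrm{ref},k}(1,t)|+\tilde\kappa_{11}\,\epsilon_{\mathrm{sum}}\|w(\cdot,t_k)\|_\infty,
\]
which can be solved for the left-hand side precisely because of the smallness assumption $\epsilon_{g^v}\leq 1/\tilde\kappa_{10}$ in~(\ref{condition 7}). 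This circular dependence and the role of~(\ref{condition 7}) are the missing ingredients in your argument; without them, the $\epsilon_{g^v}$-term cannot be absorbed into $\kappa_5\,\epsilon_{\mathrm{sum}}\|w(\cdot,t_k)\|_\infty$.
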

\begin{proof} See Appendix \ref{proof appendix 4}.
\end{proof}

\subsubsection{Error in boundary value $v(0,\cdot)$}

We can now bound the error  between the boundary value $v(0,\cdot)$ and the boundary value of the reference trajectory, $v^{\mathrm{ref}, k}(0,\cdot)$, that would be obtained if the actual parameters were known.
\begin{lemma}  \label{lemma error v(0)}
 Choose $\delta$ as in Lemma \ref{lemma: prediction error}.  There exists a constant $\kappa_6>0$ such that 
\begin{equation} \begin{aligned}
|v(0,\bar{\tau}^v(t;0))-v^{\mathrm{ref}, k}&(0,\bar{\tau}^v(t;0))| \\& \leq \kappa_6\,\epsilon_{\mathrm{sum}}\, \|w(\cdot,t_k)\|_{\infty} \label{bound boundary error}
\end{aligned}\end{equation}
for all $t\in[t_k,t_{k+1}]$.
\end{lemma}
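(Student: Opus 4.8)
The plan is to obtain (\ref{bound boundary error}) by propagating the control‑input error provided by Lemma~\ref{lemma: input error} across the spatial domain, from $x=1$ down to $x=0$ along the $v$‑characteristics, via a Gronwall estimate. First I would reduce the claim to a comparison on the uncertain analogue of $\mathcal{A}(t_k)$ (cf.\ (\ref{eq: A})), namely $\bar{\mathcal{A}}(t_k)=\{(x,s):x\in[0,1],\ s\in[\bar\tau^v(t_k;x),\bar\tau^v(t_{k+1};x)]\}$. Both the closed‑loop trajectory $w$ and the reference trajectory $w^{\mathrm{ref},k}$ solve (\ref{wtildeerror_t})--(\ref{utildeerrorBC}) with the \emph{actual} coefficients $\bar\Lambda,\bar F,\bar g^u$ and share the state $w(\cdot,t_k)$ at time $t_k$; hence they coincide on the determinate set $\bar{\D}_k$ (cf.\ (\ref{D bar})), and on $\bar{\mathcal{A}}(t_k)$ their difference is entirely driven by the mismatch of the two traces at $x=1$, which Lemma~\ref{lemma: input error} bounds by $\kappa_5\,\epsilon_{\mathrm{sum}}\,\|w(\cdot,t_k)\|_{\infty}$. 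Since $\delta$ is chosen as in Lemma~\ref{lemma: prediction error} (so $\delta\le\delta^{\mathrm{max}}$, cf.\ (\ref{delta rob})), both trajectories are bounded on $\bar{\mathcal{A}}(t_k)$ by $c=1.5\,\kappa_1^2\,\|w_0\|_{\infty}$ and, by Section~\ref{sec:robustness w_t} applied to $w$ and to $w^{\mathrm{ref},k}$, have uniformly bounded time derivative, hence uniformly bounded spatial gradient via (\ref{w_x}); these a priori bounds are what make the subsequent Gronwall argument close with a finite constant.

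Next I would set up the integral representations. Integrating the uncertain $v$‑dynamics (the analogue of (\ref{v_x 1}) with $\bar f^v,\bar\lambda^v$) along the $v$‑characteristic $\bar\tau^v(t;\cdot)$ of $w$ gives $v(0,\bar\tau^v(t;0))=v(1,t)+\int_0^1 (\bar f^v/\bar\lambda^v)(\xi,w(\xi,\bar\tau^v(t;\xi)))\,d\xi$, and the same identity holds with $v^{\mathrm{ref},k},w^{\mathrm{ref},k}$ along the $v$‑characteristic $\bar\tau^{v,\mathrm{ref}}_k(\cdot;\cdot)$ of $w^{\mathrm{ref},k}$. Because (\ref{bound boundary error}) compares the two at the common point $\bar\tau^v(t;0)$, I would pick $s(t)$ so that the reference characteristic through $(1,s(t))$ reaches $(0,\bar\tau^v(t;0))$ and absorb $|v^{\mathrm{ref},k}(1,s(t))-v^{\mathrm{ref},k}(1,t)|\le \|v_t^{\mathrm{ref},k}(1,\cdot)\|_{\infty}\,|s(t)-t|$, with $\|v_t^{\mathrm{ref},k}(1,\cdot)\|_{\infty}$ bounded by Section~\ref{sec:robustness w_t} and $|s(t)-t|$ bounded by the difference of the two $v$‑characteristic lines. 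Subtracting the two integral identities, I would run a coupled Gronwall inequality in $x$ for $\eta_v(x)$ and $\eta_u(x)$, the absolute $v$‑ and $u$‑differences between $w$ and $w^{\mathrm{ref},k}$ evaluated at the corresponding points of their characteristics: $\eta_v(x)$ is dominated by $\eta_v(1)$ plus the line‑mismatch term plus $\int_x^1$ of the variation of $\bar f^v/\bar\lambda^v$, which by the Lipschitz assumptions of Section~\ref{sec:assumptions} is bounded by a constant built from $l_F,l_\Lambda,l_{\Lambda^{-1}}$ times $\eta_u(\xi)+\eta_v(\xi)$ together with the characteristic offset $|\bar\tau^v(t;\xi)-\bar\tau^{v,\mathrm{ref}}_k(s(t);\xi)|$; the offset is in turn controlled, via integral formulas of the type (\ref{tau^v bar})--(\ref{tau^v tilde}) and the a priori bound on $\|w_x\|_{\infty}$, by $l_{\Lambda^{-1}}^2\,l_\Lambda\int(\eta_u+\eta_v)$. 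For $u$, which enters $\bar{\mathcal{A}}(t_k)$ through $x=0$ with $u(0,\cdot)=\bar g^u(v(0,\cdot))$ and $u^{\mathrm{ref},k}(0,\cdot)=\bar g^u(v^{\mathrm{ref},k}(0,\cdot))$, I would use $\eta_u|_{x=0}\le l_{g^u}\,\eta_v|_{x=0}$ and the Lipschitz variation of $\bar f^u/\bar\lambda^u$ along the $u$‑characteristics. Since the travel time across $\bar{\mathcal{A}}(t_k)$ is at most $l_{\Lambda^{-1}}$, Gronwall's inequality then gives $\sup_{x\in[0,1]}(\eta_u(x)+\eta_v(x))\le \kappa_6\,\epsilon_{\mathrm{sum}}\,\|w(\cdot,t_k)\|_{\infty}$ for a constant $\kappa_6$ depending only on the Lipschitz data, $l_{\Lambda^{-1}}$, $\kappa_1$, $\kappa_5$ and the uniform $w_t$‑bound; evaluating at $x=0$ yields (\ref{bound boundary error}).

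I expect the main obstacle to be keeping this coupled estimate self‑consistent: the $v$‑error is transported along a characteristic whose position depends on the very state difference one is trying to bound, so one must carry simultaneous bounds on the state difference and on the characteristic‑line offset and verify that the Gronwall constant stays finite — which hinges on the bounded travel time $l_{\Lambda^{-1}}$ — while also checking that every correction term (in particular the line‑mismatch term) is genuinely of order $\epsilon_{\mathrm{sum}}\,\|w(\cdot,t_k)\|_{\infty}$, so that no contribution independent of the model or measurement errors leaks in. A minor preliminary point is existence of the two trajectories on $\bar{\mathcal{A}}(t_k)$: for $w^{\mathrm{ref},k}$ this follows from Theorem~\ref{thm state feedback exact} applied with the actual coefficients (using $\delta\le\delta^{\mathrm{max}}$ and the smallness of $\|w_t(\cdot,0)\|_{\infty}$), and for $w$ from the robust boundedness of $w_t$ established in Section~\ref{sec:robustness w_t}.
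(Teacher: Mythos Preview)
Your proposal is correct and follows essentially the same route as the paper. The paper's own proof is a one-line pointer—``very similar to the derivation of (\ref{bound input error preliminary}) and uses (\ref{bound input error})''—and your plan is precisely the natural expansion of that pointer: both $w$ and $w^{\mathrm{ref},k}$ obey the \emph{same} uncertain dynamics (\ref{wtildeerror_t})--(\ref{utildeerrorBC}) with the same state at $t_k$, so the error on $\bar{\mathcal{A}}(t_k)$ has no parameter-mismatch source terms and is driven solely by the trace discrepancy at $x=1$ bounded in Lemma~\ref{lemma: input error}; a Gronwall estimate along the characteristics, together with a bound on the characteristic offset controlled by the uniform $w_t$-bound from Section~\ref{sec:robustness w_t}, yields (\ref{bound boundary error}). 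The only stylistic difference is that the paper would phrase the propagation via error-PDE dynamics as in (\ref{e^w 2})--(\ref{e^w IC 2}) (with the $\epsilon_\Lambda,\epsilon_F,\epsilon_{g^u}$ terms absent), whereas you write the integral identities along characteristics directly; the two are equivalent.
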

\begin{proof}
The proof is very similar to the derivation of (\ref{bound input error preliminary})  and uses (\ref{bound input error}).
\end{proof}

\begin{remark}\label{remark kappa6}
Close inspection of the proofs of Lemmas \ref{lemma: prediction error}-\ref{lemma error v(0)} reveals that the constant $\kappa_6$ scales exponentially with $(\tau_{k+1}-t_k)$, i.e.,~the sum of the delay $\tau_{k+1}-t_{k+1}$ and the sampling interval $\theta$. That is, shorter sampling intervals reduce the prediction error.  This raises the conjecture that if actuators are available at both boundaries, bilateral control as in \cite{strecker2017twosided} might lead to smaller prediction errors and, thus, better robustness margins due to the shorter prediction horizons.
\end{remark}

\subsection{Proof of Theorem \ref{thm:robust convergence}}\label{sec:proof robustness}
We are now in position to prove convergence of the uncertain closed-loop system to a ball around the origin.

\begin{proof}[Proof of Theorem \ref{thm:robust convergence}]
The idea of the proof is to show that 
\begin{equation}
|v(0,t)|\leq \frac{1}{\kappa_1} \varepsilon  \label{proof rob stab 4}
\end{equation}
for all $t\geq T^{\prime}$ for some $T^{\prime}>0$. By virtue of  Lemma \ref{lemma bijective map}, this ensures that $\|w(\cdot,t)\|_{\infty} \leq \varepsilon$ for all $t\geq T=T^{\prime}+l_{\Lambda^{-1}}$.
 The trajectory of $v(0,\cdot)$ and the main  ideas behind the proof are also sketched in Figure \ref{fig:proof robustness}.

We will show the bounds
\begin{align}
|v(0,\bar{\tau}_k)|&=|v^{\mathrm{ref}, k}(0,\bar{\tau}_{k})|\leq \kappa_1 \|w_0\|_{\infty}, \label{proof rob stab 6}\\
\sup_{t\in[0,\bar{\tau}_k]}|v(0,t)|& \leq 1.5\kappa_1 \|w_0\|_{\infty}, \label{proof rob stab 61}
\end{align}
for all $k\in\N$ by induction. For $k=0$, (\ref{proof rob stab 6})-(\ref{proof rob stab 61}) hold due to Theorem \ref{thm:determinate set 1}. Assume they hold up to $k$ and we show  that they also hold for $k+1$. 

Lemmas \ref{lemma bound v_t 1} - \ref{lemma bound v_t 2} and the design of $U^*$ with $\delta\leq \delta^{\mathrm{max}}$, and the assumption  $\|w_t(\cdot,0)\|_{\infty} \leq \frac{\bar{\delta}^{\mathrm{max}}}{\kappa_2}$, ensure that  $|v_t(0,t)|\leq \bar{\delta}^{\mathrm{max}}$ as long as $\sup_{s\in[0,t]}|v(0,s)| \leq 1.5\kappa_1 \|w_0\|_{\infty}$. Since $\tau_{k+1}-\tau_k\leq \tau_{k+1}-t_k\leq  \theta+l_{\Lambda^{-1}}$ and since $\bar{\delta}^{\mathrm{max}} \leq \frac{\|w_0\|_{\infty}}{\theta+l_{\Lambda^{-1}}}$, (\ref{proof rob stab 6}) implies that (\ref{proof rob stab 61}) also holds for $k+1$. 
Therefore, $\sup_{t\leq \bar{\tau}_{k+1}}|v_t(0,t)|\leq \bar{\delta}^{\mathrm{max}}$  and $\sup_{x\in[0,1],t\in[0,\bar{\tau}^v(t_{k+1};x)]}{\|}w_t(x,t)\|\leq \kappa_2 \bar{\delta}^{\mathrm{max}}$.

We next search for a lower bound $\sigma>0$ such that 
\begin{equation}
\bar{\tau}_{k}+\sigma \leq \bar{\tau}_{k+1} \label{rho_1}
\end{equation} 
is guaranteed. Let $\tilde{t}(\sigma)$ be defined such that $\bar{\tau}^v(\tilde{t};0) = \bar{\tau}_k+\sigma$. Consider the difference $\Delta(x) = \bar{\tau}^v(\tilde{t};x)-\bar{\tau}^v(t_k;x)$.
We have $\Delta(0) = \sigma$ and (applying the chain rule)
\begin{equation}\begin{aligned}
|\partial_x \Delta(x)|  & =   \left|\frac{1}{\bar{\lambda}^v\left(x,w(x,\bar{\tau}^v(t_k;x))\right)}\right. \\
& \qquad -\left.  \frac{1}{\bar{\lambda}^v\left(x,w(x,\bar{\tau}^v(\tilde{t};x))\right)}\right| \\
 &\leq { \|\partial_w\left(\bar{\lambda}^v\right)^{-1}\|_{\infty}\times \|\partial_t w\|_{\infty} \times \Delta(x)} \\
& \leq  ( l_{\Lambda^{-1}}^2 l_{\Lambda}  )\times ( \kappa_2 \bar{\delta}^{\mathrm{max}} )\times  \Delta(x).
\end{aligned}
\end{equation}
Thus, $\tilde{t}(\sigma) - t_k = \Delta(1)\leq e^{l_{\Lambda^{-1}}^2 l_{\Lambda} \kappa_2 \bar{\delta}^{\mathrm{max}}}\sigma $ and, consequently, (\ref{rho_1}) can be ensured for $\sigma=  e^{-l_{\Lambda^{-1}}^2 l_{\Lambda} \kappa_2 \bar{\delta}^{\mathrm{max}}}\theta$.

Using (\ref{bound boundary error}) and (\ref{bijective map estimates}), we get for $t\in[t_k,t_{k+1}]$, $k\in\N$,
\begin{equation}\begin{aligned}
|&v(0,\bar{\tau}^v(t;0))| \leq  |v^{\mathrm{ref}, k}(0,\bar{\tau}^v(t;0))| \\
&+ |(v(0,\bar{\tau}^v(t;0))-v^{\mathrm{ref}, k}(0,\bar{\tau}^v(t;0)))|\\
& \leq |v^{\mathrm{ref}, k}(0,\bar{\tau}^v(t;0))| + \kappa_1\kappa_6 \epsilon_{\mathrm{sum}} \max_{s\in\I(t_k)} |v(0,s)| . \label{proof rob stab}
\end{aligned}\end{equation}

We then have the rough bound
\begin{equation}\begin{aligned}
\max_{s\in\I(t_k)} |v(0,s)| &\leq |v(0,\bar{\tau}_k)| + 2 l_{\Lambda^{-1}} \bar{\delta}^{\mathrm{max}} \\
& \leq \kappa_1 \|w_0\|_{\infty} + 2 l_{\Lambda^{-1}} \bar{\delta}^{\mathrm{max}}  \label{proof rob stab 2}.
\end{aligned}\end{equation}
Let
\begin{equation}
\epsilon^{\mathrm{max}}= \frac{\min\{\delta\sigma,\,\frac{1}{\kappa_1}\varepsilon,\,\kappa_1\|w_0\|_{\infty}\}}{2\kappa_1\kappa_6(\kappa_1 \|w_0\|_{\infty} + 2 l_{\Lambda^{-1}} \bar{\delta}^{\mathrm{max}})}. \label{eq: epsilonmax}
\end{equation} 
Bounding the last term on the right-hand side of  (\ref{proof rob stab}) by (\ref{proof rob stab 2}) and with $\epsilon_{\mathrm{sum}}\leq \epsilon^{\mathrm{max}}$,  using $v^{\mathrm{ref}, k}(0,\cdot)=U^{*,k}(\cdot)$ and $\bar{\tau}_{k+1}-\bar{\tau}_{k}\geq \sigma$,  we obtain from (\ref{proof rob stab}) that
\begin{equation}\begin{aligned}
|v(0,\bar{\tau}_{k+1})| \leq &\,\max\{|v(0,\bar{\tau}_{k})| -  \sigma \delta,\,0\} \\ &~+ \frac{1}{2}\min\{\delta\sigma,\,\frac{\varepsilon}{\kappa_1},\,\kappa_1\|w_0\|_{\infty}\}. \label{proof rob stab 5 }
\end{aligned}\end{equation}
That is, the sequence $|v(0,\bar{\tau}_{k})|$, $k\in\N$, is strictly decreasing (by at least $\frac{\sigma \delta}{2}$ per time step) until it satisfies  both $|v(0,\bar{\tau}_{k})|\leq \frac{\varepsilon}{2\kappa_1}$ and $|v(0,\bar{\tau}_{k})|\leq \frac{\kappa_1\|w_0\|_{\infty}}{2}$ for all $k\geq K=\left\lceil \frac{2\kappa_1\|w_0\|_{\infty}}{\sigma \delta}\right\rceil$.  This completes the induction step for (\ref{proof rob stab 6}). Moreover, once $|v(0,\bar{\tau}_{k})|\leq \frac{\varepsilon}{2\kappa_1}$, the boundary value during the interval $s\in[\bar{\tau}_{k},\bar{\tau}_{k+1}]$ satisfies
\begin{equation}\begin{aligned}
|v(0,s)|  & \leq |v(0,\bar{\tau}_{k})| + \frac{1}{2}\min\{\delta\sigma,\,\frac{\varepsilon}{\kappa_1},\,\kappa_1\|w_0\|_{\infty}\}\\
&\leq  \frac{\varepsilon}{\kappa_1}. \label{proof rob stab 8}
\end{aligned}\end{equation}
That is, (\ref{proof rob stab 4}) holds for all $t\geq \bar{\tau}_K$.

Finally, we show the solution lies in $\X^{c, c^{\prime}}_{[0,1]\times[0,\infty)}$. Compatibility condition (\ref{compatibility measurement}) ensures that the input $U(t)$ is continuous at times $t=t_k$, $k\in\N$, and  Lipschitz-continuity of $U$ during each interval $[t_k,t_{k+1}]$ follows from the assumption that the measurement is Lipschitz, satisfies (\ref{compatibility measurement})-(\ref{compatibility measurement 2}), and the construction in Algorithm \ref{algorithm_statefeedback}.   The bound  $\|w\|_{\infty}\leq c$ follows from  (\ref{proof rob stab 61}) and Theorem \ref{thm: global existence condition}.
\end{proof}

As the sampling time $\theta$ is increased from zero, the maximum allowable error $\epsilon^{\mathrm{max}}$ in (\ref{eq: epsilonmax}) initially increases approximately linearly with $\theta$, since larger $\theta$ increases $\sigma$, thus increasing $\epsilon^{\mathrm{max}}$ in (\ref{eq: epsilonmax}). However, for larger values of $\theta$, $\epsilon^{\mathrm{max}}$ decreases approximately exponentially with $\theta$ because of the exponential dependence of $\kappa_6$ in (\ref{eq: epsilonmax}) on $\theta$; see also Remark \ref{remark kappa6}. 

The exponential decay for larger $\theta$ is intuitive because in order to limit the prediction errors, longer prediction horizons should allow less uncertainty. The reduction in robustness as $\theta$ approaches zero can be attributed to the fact that on each sampling interval, the target boundary value $v^{\mathrm{ref}, k}(0,\cdot)$ is reduced by $\delta\times(\tau_{k+1}-\tau_k)$. Therefore, shorter sampling intervals mean that the state has less time to converge towards the origin before a new error is introduced. Conversely, this can be seen as a type of dwell-time constraint \cite{hespanha1999dwelltime}, where for a given degree of uncertainty the sampling frequency must be bounded. 

 Similarly, larger $\delta$ improves the decay rate of the reference trajectory and improves the robustness margin in (\ref{eq: epsilonmax}), but  $\delta$ must also be sufficiently small to prevent blow-up of the gradient.

%It also seems likely that this decrease of the robustness estimate for $\theta\rightarrow 0$ is related to the fact that no correlation between measurements at successive sampling instances is assumed. {\blue In the output feedback case, for instance, there is an overlap in the measurement histories used at successive time steps. This, along with the optimal choice of $\delta$ is the subject of further research. } 
%\end{remark}

\section{Implementation and numerical example}\label{sec:numerical example}
\begin{figure}[htbp!]
\includegraphics[width=.48\columnwidth]{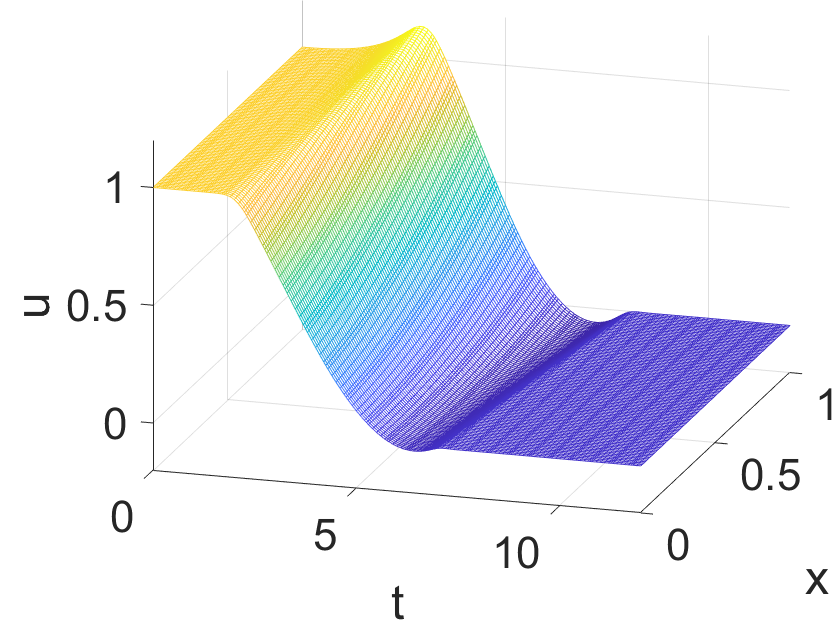} \hfill
\includegraphics[width=.48\columnwidth]{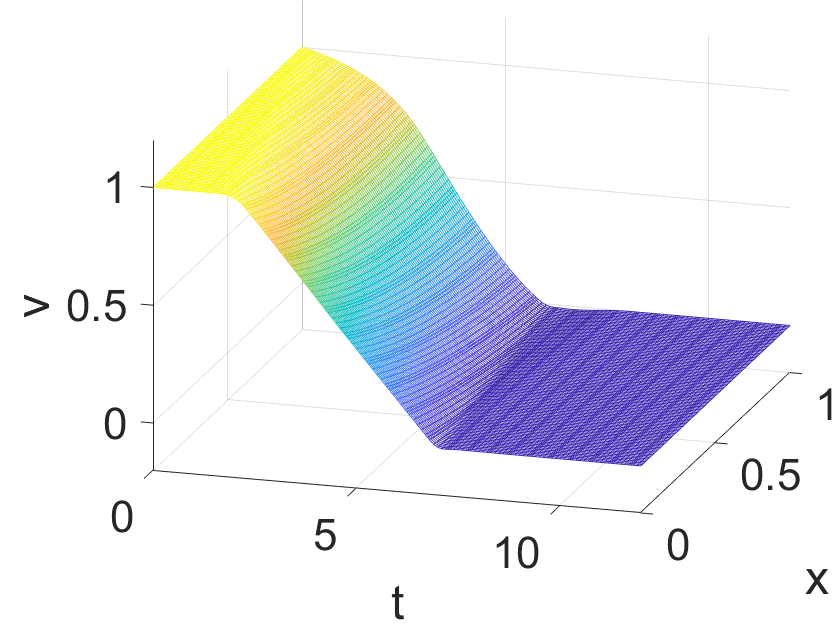}
\caption{Closed-loop trajectories.}
\label{fig:sim1}
\end{figure}

We demonstrate the controller performance in a numerical example with
%\begin{align}
%\lambda^u&=1, & \lambda^v&= \max\{1-0.5\,|v|,0.2\}, \\
%f^u&=-f^v= \frac{2}{3}(u-v) & u_0&=v_0=1\\
%  g^u &= 1-\cos(2v|_{x=0})+v|_{x=0}\cos(2) \\
%  \theta &= 0.25 & \delta&=0.2
%\end{align}
$\lambda^u=1$, $\lambda^v= \max\{1-0.5\,|v|,0.2\}$, $f^u=-f^v= \frac{2}{3}(u-v)$, $  g^u = 1-\cos(2v|_{x=0})+v|_{x=0}\cos(2)$, $u_0=v_0=1$, $ \theta = 0.25$ and $ \delta=0.2$. In open loop, both the origin and the initial condition are unstable equilibria.

The closed-loop trajectories are depicted in Figure \ref{fig:sim1}. The evolution of  $U(t)$, $v(0,t)$ and $\|w(\cdot,t)\|_{\infty}$ is also shown in Figure \ref{fig:sim2}. Once the effect of the control input reaches the uncontrolled boundary, $v(0,t)$ decays linearly with rate $\delta$ according to theory. 

For the simulations, all PDEs (i.e., system (\ref{w_t})–(\ref{uvIC}), the prediction (\ref{wtilde_t})-(\ref{wtildeIC})\footnote{Rather than solving (\ref{wtilde_t})-(\ref{wtildeIC}) over the irregularly shaped domain $\D(t_k)$, it is more convenient to implement the prediction by solving (\ref{wtilde_t})-(\ref{wtildeIC}) over the  rectangular domain $[0,1]\times[t_k,t_{end}]$ with  sufficiently large $t_{end}\geq \tau_{k}$ and arbitrary, compatible value for $U$ (e.g.,~$U(t) \equiv v(1,t_k)$), and then selecting the required part of the solution from the longer prediction. Recall that, as established in Theorem \ref{thm:determinate set 1}, $U$ does not affect the solution on the domain $\D(t_k)$.} and target dynamics (\ref{target 1})-(\ref{target last})) are discretized in space using first-order finite differences, leading to a high-order ODE (‘‘method of lines’’) that is solved by use of Matlab’s ode45. For a spatial grid with 100 elements, which has been used to produce the figures, the average computation time to evaluate the output feedback controller is about 0.1\,s on a standard laptop (i.e., less than half of $\theta$ if the time units are assumed to be in seconds), although it should be emphasized that the current code has not been optimized for performance.

\begin{figure}[htbp!]\centering
\includegraphics[width=.95\columnwidth]{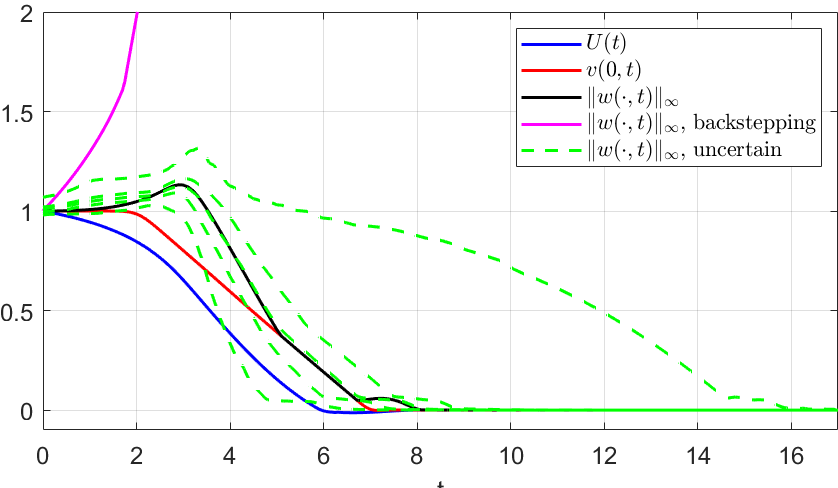} 
\caption{Control input $U(t)$, boundary value $v(0,t)$ and norm $\|w(\cdot,t)_{\infty}\|$ of nominal simulation, norm using backstepping controller and distribution of the norms under uncertainty (1st, 25th, 50th, 75th and 99th percentiles). }
\label{fig:sim2}
\end{figure}

Figure \ref{fig:sim2} also shows a comparison with the backstepping controller from \cite{vazquez2011backstepping} {, which has been computed by linearizing the system at the origin. When this linear controller is applied to the nominal nonlinear model,  the states diverge quickly.}

Finally, the sensitivity to multiplicative uncertainties of the form $\bar{f}^{u}=(1+\varepsilon_{f^{u}})\times f^{u}$, %$\bar{\lambda}^{u}=(1+\varepsilon_{\lambda^{u}})\times \lambda^{u}$, 
$\tilde{v}^k(\cdot,t_k)=(1+\varepsilon_{v})\times v(\cdot,t_k)$ with $v_0=\frac{1}{1+\varepsilon_{v}}$,\footnote{A piecewise affine function is added to the erroneous measurement to ensure the compatibility conditions (\ref{compatibility measurement})-(\ref{compatibility measurement 2}).} $\bar{U}=(1+\varepsilon_U)\times U$, etc., is investigated for $\epsilon_F=10\%$, $\epsilon_{\Lambda}=\epsilon_{g^u}=4\%$, and $\epsilon_w=\epsilon_U=\epsilon_{g^v}=2\%$. The various percentiles of the norms at each time have been computed from 1024 simulations (all 512 extremes, $\varepsilon_{f^{u}}=\pm \epsilon_F$, etc., and 512 random samples). As can be seen in Figure \ref{fig:sim2}, the trajectories under uncertainty not only remain stable but actually all converge to the origin, although the convergence speed varies between the samples.

\section{Conclusion}\label{sec conclusion}
A sampled-time predictive feedback controller is synthesized for quasilinear hyperbolic systems with only one boundary control input.  The contribution can be seen as a closing of the gap between existing feedback control methods for linear and semilinear methods \cite{vazquez2011backstepping,strecker2017output} and open-loop controllability results for quasilinear systems \cite{li2003exact,cirina1969boundary}. A robustness certificate shows that the method is inherently robust to small errors in model parameters, actuation and measurements.

The choice of $\delta$ or, more generally, the design of $U^*$ leaves degrees of freedom for tuning or even optimization of transients. The associated trade-offs remain to be explored. 
In future work, it would be desirable to obtain sharper, less conservative  conditions than the estimates of the maximal allowable decay rate $\delta^{\mathrm{max}}$ and uncertainty $\varepsilon^{\mathrm{max}}$ provided. 

 A state estimation scheme for the type of system considered in this paper is presented in \cite{li2008observability}. By evaluating this observer at every sampling instance and combining it with the state-feedback controller presented here, one solves the corresponding output feedback control problem. Alternatively, the observer from \cite{strecker2017output} can be extended to quasilinear systems.  

The developments presented in this paper are amenable to several variations. 
For instance, classical $C^1$ solutions can be obtained if the initial condition, control input and coefficients are $C^1$-functions and satisfy the $C^1$-compatibility conditions, and the virtual input is modified to satisfy the additional condition $\partial_t U^{*,k}(\tau_k)=\tilde{v}_t^k(0,\tau_k)$.

If the Lipschitz constants of $\Lambda$, $F$, $g^u$ and $g^v$, or $\Lambda^{-1}$,  are only locally bounded, one can obtain  a local result where smallness of the initial condition is required. Similarly, if  (\ref{bound Lambda_w})-(\ref{bound Lambda-1}) only hold on some (not necessarily bounded) subset of the state-space, at least in some cases it is possbile to  ensure that under feedback control the states remain in this subset.   See also \cite{gugat2003global} for such a result for so-called subcritical states of the Saint-Venant equations.

For  time-varying $\Lambda$ and $F$, the integrands in  (\ref{u_ts})-(\ref{v_ts})  have  additional  terms with $\Lambda_t$ and $F_t$ that are not multiplied with the state. If it were possible to find a result similar to Lemma \ref{lemma bound quadratic} for such integral equations, then the methodology in this paper would directly carry over to systems with slowly varying $\Lambda$ and $F$.

For clarity of presentation, we restricted the system to scalar-valued $u(x,t)$. However, the results directly carry over to systems with one actuated state and an arbitrary number of states convecting in the opposite direction (that is, to vector-valued $u(x,t)\in\R^n$ for arbitrary $n\in\N$, as in the linear case considered in \cite{dimeglio2013stabilization}). The prediction operators on $\D(t_k)$ and $\A(t_k)$ are still well-posed and straightforward to implement. The corresponding convergence time in Theorem \ref{thm state feedback exact}, i.e., $\tau^u$ in Equation (\ref{eq exact state feedback thm}), would need to be replaced by the slowest characteristic line of $u$. 

More types of uncertainty, such as noise, disturbances and input delays could be considered in future work. For input delays, it has been shown that cancelling the boundary reflection term $g^v$ can lead to robustness issues \cite{auriol2018delayrobust}, which might necessitate changes to the control design.

Finally, given the  array of systems for which controllability results exist, the method should be extended to other classes of interest such as systems with more than one control input and networks of hyperbolic systems.

% if have a single appendix:
%\appendix[Coefficients for integral equations]
\appendices
\section{Coefficients for integral equations} \label{appendix A}
Define $(\xi^u_0(x,t),t^u_0(x,t))$ as the intersection of $(\xi^u(x,t;s),s)$ with either $[0,1]\times 0$ or $0 \times [0,\infty)$, and $(\xi^v_0(x,t),t^v_0(x,t))$ as the intersection of $(\xi^v(x,t;s),s)$ with either $[0,1]\times 0$ or $1 \times [0,\infty)$.

The initial and boundary values in (\ref{u_ts})-(\ref{v_ts}) are
\begin{align}
u_t^0(x,t) &=  \begin{cases} u_t(\xi^u_0(x,t),0) & \text{ if } t^u_0(x,t)=0  \\ 
u_t(0,t^u_0(x,t)) & \text{ if } \xi^u_0(x,t)=0  \end{cases},  \label{u_t0} \\
v_t^0(x,t) &= \begin{cases} v_t(\xi^v_0(x,t),0) & \text{ if }  t^v_0(x,t)=0  \\  U_t(t^v_0(x,t)) &  \text{ if } \xi^v_0(x,t)=1 \end{cases}. \label{v_t0}
\end{align}
The coefficient functions are (omitting the arguments for brevity)
\begin{align}
c_1&=\frac{\partial_u\lambda^u}{\lambda^u}, & c_2&= \frac{\partial_v\lambda^u}{\lambda^u},   \\
c_3&= \partial_uf^u- \frac{\partial_u\lambda^u}{\lambda^u}f^u,  & c_4&= \partial_vf^u- \frac{\partial_v\lambda^u}{\lambda^u}f^u,  \\
c_5&=\frac{\partial_u\lambda^v}{\lambda^v}, & c_6&= \frac{\partial_v\lambda^v}{\lambda^v},   \\
c_7&= \partial_uf^v- \frac{\partial_u\lambda^v}{\lambda^v}f^v,  & c_8&= \partial_vf^v- \frac{\partial_v\lambda^v}{\lambda^v}f^v.
\end{align}

\section{Technical proofs for Section \ref{sec:robustness}}
\subsection{Proof of Lemma \ref{lemma robust existence wtilde_t}}\label{proof appendix 1}
The bound $\epsilon_{w}\leq 1$  in (\ref{epsilonw<1}) implies $\|\tilde{w}^{k}(\cdot,t_k)\|_{\infty}\leq 2c$,  hence the a-priori bound $\sup_{(x,t)\in\tilde{\D}_k}\|\tilde{w}^{k}(x,t)\|_{\infty}\leq 2c\kappa_1$. Let $\tilde{\gamma}_1$ be defined as in (\ref{gamma_1}) but with the supremum taken over $2\kappa_1c$. As in Theorem \ref{thm:determinate set 1} (note that $\partial_t g^u=0$ is assumed), 
\begin{equation}
\|\tilde{w}_t^{k}(\cdot,t_k)\|_{\infty}\leq \frac{1}{\max\{1,l_{g^u}\}}e^{-2\tilde{\gamma}_1 l_{\Lambda^{-1}}} \label{eq hlp_0}
\end{equation}
is sufficient to ensure existence of  $\tilde{w}^{k}$ on $\tilde{\D}_k$. Because of (\ref{error bound w_t}) and  $\epsilon_{w_t}\leq 1$  in (\ref{epsilonw<1}), (\ref{eq hlp_0}) is  ensured by  
\begin{equation}
\|{w}_t(\cdot,t_k)\|_{\infty}\leq \frac{1}{2\,\max\{1,l_{g^u}\}}e^{-2\tilde{\gamma}_1 l_{\Lambda^{-1}}}. \label{eq hlp_00}
\end{equation}
By virtue of Lemma \ref{lemma bijective map}, (\ref{eq hlp_00}) is ensured by
\begin{equation}
\bar{\delta}^{\mathrm{max}}_1=\frac{1}{2\,\kappa_2\,\max\{1,l_{g^u}\}}e^{-2\tilde{\gamma}_1 l_{\Lambda^{-1}}}. \label{delta_bar_max_1}
\end{equation}
A conservative estimate of $\kappa_3$ is 
\begin{equation}
\kappa_3=2\kappa_2\max\{1,l_{g^u}\}e^{2\tilde{\gamma}_1 l_{\Lambda^{-1}}}. \label{kappa_3}
\end{equation}

\subsection{Proof of Lemma \ref{lemma bound v_t 1}}\label{proof appendix 2}
The proof of Lemma \ref{lemma bound v_t 1} makes use of the following ancillary Lemma, which  gives an estimate of $|u_t|$ on some particular determinate sets.
\begin{lemma}
For every $t> 0$, 
%\begin{equation}
%\sup_{x\in[0,1],\,s\leq \bar{\tau}^v_0(t;x)} |u_t(x,s)| \leq \kappa_2 \sup_{s < t} |v_t(0,s)|. \label{condition 3}
%\end{equation}
\begin{equation}
\operatorname*{ess\,sup}_{\begin{subarray}{c}x\in[0,1]\\s \in[\bar{\tau}^u(0;x), \bar{\tau}^v(t;x)]\end{subarray}} |u_t(x,s)| \leq \kappa_2 \operatorname*{ess\,sup}_{s \in [\bar{\tau}^u(0;0), \bar{\tau}^v(t;0))} |v_t(0,s)|. \label{condition 3}
\end{equation}
\end{lemma}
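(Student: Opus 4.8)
The plan is to exploit the structure already established in the proofs of Theorems~\ref{thm:determinate set 1} and \ref{thm:determinate set 2}, reading the bound for $u_t$ off the integral equation (\ref{u_t integral2}) solved in the positive $x$-direction and along the characteristic lines of $u$ in the negative $x$-direction. The region $\{(x,s):x\in[0,1],\,s\in[\bar\tau^u(0;x),\bar\tau^v(t;x)]\}$ is exactly a determinate set for the problem posed in the $x$-direction: its lower boundary $s=\bar\tau^u(0;x)$ is the $u$-characteristic emanating from $(0,0)$, its upper boundary $s=\bar\tau^v(t;x)$ is the $v$-characteristic arriving at $(0,\bar\tau^v(t;0))$, and the left edge $x=0$ carries the trace $v_t(0,\cdot)$ on the interval $\I=[\bar\tau^u(0;0),\bar\tau^v(t;0))$. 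So the goal is to show that the worst-case amplification of $|v_t(0,\cdot)|$ as one integrates rightward to reach any point $(x,s)$ in this set, and then possibly reflects at $x=0$ via $g^u$, is at most the factor $\kappa_2$ defined in (\ref{kappa_2}).

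First I would fix a point $(x,s)$ in the set and follow the $u$-characteristic through it backwards (decreasing $x$) until it hits $x=0$ at some time $t^u_0\in\I$, using that for all points in the interior of this determinate set the $u$-characteristic cannot reach $x=1$ before exiting through $x=0$ or the initial line — but here, because the lower boundary is itself a $u$-characteristic from the origin, every such characteristic actually originates on $x=0$ at a time in $\I$, so $u_t^{0,x}(x,s)=u_t(0,t^u_0)$. The boundary condition (\ref{utildeerrorBC}) (with $\partial_t g^u=0$ as assumed throughout Section~\ref{sec:robustness}) gives $u_t(0,t^u_0)=\partial_v\bar g^u(v(0,t^u_0))\,v_t(0,t^u_0)$, so $|u_t^{0,x}(x,s)|\le\max\{1,l_{g^u}\}\,\operatorname*{ess\,sup}_{r\in\I}|v_t(0,r)|$. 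Then I would apply Lemma~\ref{lemma bound quadratic} to $\alpha(\xi)=\|w_t(\xi,\tau^{u,x}(x,s;\xi))\|_\infty$ along (\ref{u_t integral2})--(\ref{v_t integral2}), with the constant $\gamma$ taken as $\gamma_3\,l_{\Lambda^{-1}}$ (the relevant Lipschitz-type constant for the uncertain system, since the coefficients $\bar c_i$ appear), and with $x$ playing the role of the terminal time $T\le l_{\Lambda^{-1}}$; the smallness hypothesis $|v_t(0,\cdot)|\le\bar\delta\le\bar\delta^{\mathrm{max}}$ is exactly what guarantees the hypothesis $|\alpha(0)|\le e^{-\gamma T}$ of that lemma holds. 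This yields $|u_t(x,s)|\le\max\{1,l_{g^u}\}\,e^{2\gamma_3 l_{\Lambda^{-1}}}\operatorname*{ess\,sup}_{r\in\I}|v_t(0,r)|=\kappa_2\operatorname*{ess\,sup}_{r\in\I}|v_t(0,r)|$, which is (\ref{condition 3}) after taking the essential supremum over $(x,s)$.

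The main obstacle I anticipate is bookkeeping on the geometry: one must verify carefully that the region in question really is a determinate set for the $x$-direction Cauchy problem — in particular that for every $(x,s)$ in it the backward $u$-characteristic stays inside $[0,1]\times[0,\infty)$ and exits through $x=0$ within $\I$ rather than through the initial line $s=0$ or through $x=1$, and that the entire rightward integration path for (\ref{u_t integral2}) likewise remains inside the set so that only the trace on $x=0$ and no data from $x=1$ (where the uncertain input enters) is involved. This uses the monotonicity of the characteristic foliation and the fact that the upper boundary is a $v$-characteristic and the lower boundary a $u$-characteristic, so $u$- and $v$-characteristics through interior points are trapped between them; once that is pinned down, the analytic estimate is a verbatim repetition of the argument in the proof of Theorem~\ref{thm:determinate set 1}. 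A secondary, minor point is to confirm that the a-priori state bound $\|w\|_\infty\le c$ with $c=1.5\kappa_1^2\|w_0\|_\infty$ assumed in Section~\ref{sec:robustness w_t} is what makes $\gamma_3$ (and hence $\kappa_2$) the correct constant, so that no circularity arises.
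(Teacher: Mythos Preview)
Your overall plan—recognize the region as a determinate set for the $x$-direction Cauchy problem and invoke the Gronwall-type Lemma~\ref{lemma bound quadratic}—is the paper's approach, but the execution has one concrete flaw and misses the one genuinely delicate point.

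The flaw is the choice $\alpha(\xi)=\|w_t(\xi,\tau^{u,x}(x,s;\xi))\|_\infty$ along a single $u$-characteristic. This cannot satisfy an integral inequality of the form (\ref{eq bound quadratic}): while (\ref{u_t integral2}) does integrate $u_t$ along $u$-characteristics, equation~(\ref{v_t integral2}) integrates $v_t$ along \emph{$v$-characteristics}, so the $v_t$-component of your $\alpha(\xi)$ at level $\xi$ traces back to boundary data along a different curve, and no closed inequality for $\alpha$ along the fixed $u$-characteristic results. The quantity that actually works (as in the proof of Theorem~\ref{thm: global existence condition} and the derivation of (\ref{bijective map estimates derivative})) is the supremum over the full time-slice, $\alpha(\xi)=\operatorname*{ess\,sup}_{s'}\|w_t(\xi,s')\|_\infty$; then both (\ref{u_t integral2}) and (\ref{v_t integral2}) feed the same inequality.

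The delicate point you miss is why the right-hand side carries the \emph{half-open} interval $[\bar\tau^u(0;0),\bar\tau^v(t;0))$. The endpoint is excluded because $v_t(0,\bar\tau^v(t;0))$ is determined by $U_t(t)$, i.e.\ by data \emph{outside} the determinate set; consequently $v_t(x,\bar\tau^v(t;x))$ along the entire upper boundary is not controlled by the right-hand side of (\ref{condition 3}) at all—only $u_t$ is, and that distinction is the whole content of the lemma. The paper therefore argues in two steps: first, for every $t'<t$, the full-slice argument above bounds $\|w_t\|$ on the strictly smaller closed domain by $\kappa_2\operatorname*{ess\,sup}_{[\bar\tau^u(0;0),\bar\tau^v(t';0)]}|v_t(0,\cdot)|$; second, for a point $(x,\bar\tau^v(t;x))$ on the upper boundary, the integration path in (\ref{u_t integral2}) lies (except at that single point, a null set) strictly below the upper boundary and hence inside one of the smaller domains, so the integrand bound from step one carries over and the estimate for $u_t$ alone extends to $s=\bar\tau^v(t;x)$. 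Without this two-step structure, even a repaired $\alpha$ would implicitly invoke $v_t$ at the excluded endpoint.
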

\begin{proof}
For any $t^{\prime}<t$ the inequality  
%\begin{equation}
%\sup_{x\in[0,1],\,s\leq \bar{\tau}^v_0(t^{\prime};x){\red undefined}} \|w_t(x,s)\| \leq \kappa_2 \sup_{s\leq t^{\prime}} |v_t(0,s)|  {\red + \|u_t(\cdot,0)\|} \label{eq hlp_000}
%\end{equation}
\begin{equation}\begin{aligned}
\operatorname*{ess\,sup}_{\begin{subarray}{c}x\in[0,1],\,s \in[\bar{\tau}^u(0;x), \bar{\tau}^v(t^{\prime};x)]\end{subarray}}& \|w_t(x,s)\| \\\leq \kappa_2 &\operatorname*{ess\,sup}_{s \in [\bar{\tau}^u(0;0), \bar{\tau}^v(t^{\prime};0)]} |v_t(0,s)|
\end{aligned}\end{equation}
 follows by the same derivation as (\ref{bijective map estimates derivative}), i.e., the domain below the $\sup$ on the left-hand side is a determinate set for boundary values given on the domain  below the $\sup$ on the right-hand side when the system is solved starting from $x=0$ in the positive $x$-direction (note the closed interval  below the $\sup$ on the right-hand side as opposed to the half-open interval in (\ref{condition 3})). 
For points with $s= \bar{\tau}^v(t;x)$, the inequality still holds for $u_t$ (but not for $v_t$) because for such points, the whole integration path in (\ref{u_t integral2}) except for one point (a set of measure zero which does not affect the integral) lies in a set satisfying $s\leq\bar{\tau}^v(t^{\prime};x)$ for some $t^{\prime}<t$.
\end{proof}
Let $\varphi$ be the (positive) solution of 
\begin{align}
\varphi(x) = & \exp(-4\gamma_4)\bar{\delta} + \int_{x}^1  2\gamma_4 \left(\varphi(\xi) + (\varphi(\xi))^2 \right)  \, d\xi \label{blabla}
\end{align}
on $x\in[0,1]$. By Lemma \ref{lemma bound quadratic}, $\varphi(0)\leq \bar{\delta}$. 

Using condition (\ref{condition 2}) and (\ref{condition 3}), the terms involving $u_t$ in (\ref{v_t integral3}) can be bounded by 
\begin{align}
\left|\frac{\bar{c}_{5}}{\bar{\lambda}^v} u_t(x,\bar{\tau}^v(t;x))\right|&\leq \left|\frac{\bar{c}_{5}}{\bar{\lambda}^v} \right| \bar{\delta} \kappa_2 \leq \bar{\delta}  \exp(-4\gamma_4) \gamma_4 \nonumber \\
&    = \gamma_4 \varphi(1) \leq \gamma_4 \varphi(x), \label{condition 4}\\
\left|\frac{\bar{c}_{7}}{\bar{\lambda}^v}u_t(x,\bar{\tau}^v(t;x))\right| &\leq\gamma_4 \varphi(x). \label{condition 5}
\end{align}
If 
\begin{equation}
|v_t(1,t)|\leq \exp(-4\gamma_4)\bar{\delta}  =\varphi(1), \label{condition 6}
\end{equation}
  by use of (\ref{condition 1}) and (\ref{condition 4})-(\ref{condition 5}), and using the fact that the solution of $\phi(x)=\phi_1 + \int_x^1 \phi(\xi)\varphi(\xi) + \varphi(\xi)^2+\varphi(\xi)+\phi(\xi)d\xi$, $x\in[0,1]$, is strictly increasing in $\phi_1$, we can bound  (\ref{v_t integral3}) as
\begin{equation}\begin{aligned}
|&v_t(0,\bar{\tau}^v(t;0))| \leq  \varphi(1) + \int_{0}^1  \gamma_4 \left[ \varphi(\xi) |v_t(\xi,\tau^v(\xi))| \right.\\
& \quad \left.+ |v_t(\xi,\tau^v(\xi))|^2 + \varphi(\xi) + |v_t(\xi,\tau^v(\xi))| \right]  d\xi \\
&\leq  \varphi(1)  + \int_{0}^1  2\gamma_4 \left(\varphi(\xi) + (\varphi(\xi))^2 \right)  \, d\xi = \varphi(0) \leq \bar{\delta}.
\end{aligned}
\end{equation}
In view of (\ref{v_t integral3 BC}), (\ref{condition 3}), $|\partial_U \bar{U}|\leq 1+\epsilon_U$ and $|\partial_u\bar{g}^v|\leq \epsilon_{g^v}$, (\ref{condition 6}) is ensured by imposing conditions (\ref{condition 7})  and (\ref{condition 8}).

\subsection{Proof of Lemma \ref{lemma: prediction error}}\label{proof appendix 3}
The proof of  Lemma \ref{lemma: prediction error} consists of three steps: bounding the prediction error on common determinate set of predicted and real trajectory; bounding the difference between predicted and actual characteristic line; combining the two to obtain (\ref{bound prediction error}).

Denote the prediction error as
\begin{equation}
e^{w,k}(x,t)=\left(\begin{matrix}e^{u,k}(x,t)\\e^{v,k}(x,t)\end{matrix}\right)=\tilde{w}^{k}(x,t)-w(x,t).
\end{equation}
Subtracting (\ref{wtildeerror_t})-(\ref{vtildeerrorBC})  from (\ref{wtilde_t})-(\ref{vtildeBC}), substituting $\tilde{w}=w+e^w$ in the right-hand side and expanding, the error dynamics are
\begin{align}
&e^{w,k}_t(x,t)= {\Lambda}(x,\tilde{w}^{k})e^w_x  \nonumber\\
& + \left[\left({\Lambda}(x,e^{w,k}+w)-{\Lambda}(x,w)\right)+ \left({\Lambda}(x,w)-\bar{\Lambda}(x,{w}) \right)\right]w_x\nonumber\\
& +  \left[\left({F}(x,e^{w,k}+w)-{F}(x,w)\right)+ \left({F}(x,w)-\bar{F}(x,{w}) \right)\right],
\label{e^w 2}\\
&e^{u,k}(0,t) =  \left[{g}^u((e^{v,k}+v)(0,t),t)-{g}^u(v(0,t),t)\right] \nonumber\\
& \hspace{1.7cm}  + \left[{g}^u(v(0,t),t)-\bar{g}^u(v(0,t),t)\right], \label{e^u BC 2} \\
&e^{v,k}(1,t) =  \left[ {U}(t) - \bar{U}\left(U(t)\right)\right] - \bar{g}^v({u}(1,t),t), \label{e^v BC 2}\\
%&e^w(t_k;x,t_k)=\tilde{w}(t_k;x,t_k)-w(x,t_k). \label{e^w IC 2}
&e^{w,k}(x,t_k)=W_k(x)-w(x,t_k). \label{e^w IC 2}
\end{align}
The set $\tilde{\D}_k\cap \bar{\D}_k$ is a determinate set where both the real and the predicted trajectories,  $w$ and $\tilde{w}^{k}$, are independent of their inputs. Therefore,  the error $e^{w,k}$ on $\tilde{\D}_k \cap \bar{\D}_k$ is independent of boundary condition (\ref{e^v BC 2}). 

With $\kappa_1$ as defined in (\ref{kappa_1}), $\|w(x,t)\|\leq \kappa_1 \|w(\cdot,t_k)\|_{\infty}$ for all $(x,t) \in \bar{\D}_k$ (compare with Theorem \ref{thm:determinate set 1} for $k=0$). 
The terms on the right-hand side of (\ref{e^w 2})-(\ref{e^u BC 2}) can be bounded as follows: 
$\|{\Lambda}(x,e^w+w)-{\Lambda}(x,w)\|_{\infty} \leq \|\Lambda_w\|_{\infty}\|e^w\|_{\infty}$, $\|{\Lambda}(x,w)-\bar{\Lambda}(x,{w})\|_{\infty} \leq \epsilon_{\Lambda} \, \kappa_1\, \|w(\cdot,t_k)\|_{\infty}$, $
\|w_x\|_{\infty} \leq l_{ \Lambda^{-1}} (\|w_t\|_{\infty} + l_F\,\|w\|_{\infty} )$,
%\begin{align}
%\|{\Lambda}(x,e^w+w)-{\Lambda}(x,w)\|_{\infty} \leq& \|\Lambda_w\|_{\infty}\|e^w\|_{\infty} ,\\
%\|{\Lambda}(x,w)-\bar{\Lambda}(x,{w})\|_{\infty} \leq &\epsilon_{\Lambda} \, \kappa_1\, \|w(\cdot,t_k)\|_{\infty},\\
%%\|w_x\|_{\infty} &\leq {\red  \|\Lambda^{-1}\|_{\infty}}\left(c^{\prime} + {\red \| F\|_{\infty}} \right)
%\|w_x\|_{\infty} \leq l_{ \Lambda^{-1}} (\|w_t\|_{\infty} &+ l_F\,\|w\|_{\infty} ),
%\end{align}
and similarly for the terms involving $F$.  Consequently, there exist some constants $\gamma_6$ and $\gamma_7$ such that 
\begin{align}
\left|\frac{d}{ds}e^u(\tilde{\xi}^u_k(x,t;s),s)\right| \leq& \gamma_6 \|e^w(\cdot,s) \|_{\infty} \nonumber\\
&  + \gamma_7 (\epsilon_{\Lambda}+\epsilon_F) \|w(\cdot,t_k)\|_{\infty}, \label{hlp 1}\\
\left|\frac{d}{ds}e^v(\tilde{\xi}^v_k(x,t;s),s)\right| \leq& \gamma_6 \|e^w(\cdot,s) \|_{\infty} \nonumber\\
&  + \gamma_7(\epsilon_{\Lambda}+\epsilon_F) \|w(\cdot,t_k)\|_{\infty} \label{hlp 2},
\end{align}
where $\tilde{\xi}^u_k$ and $\tilde{\xi}^v_k$ are defined as in (\ref{xi^u})-(\ref{xi^v}) but with  $\tilde{w}^k$.
The boundary reflection term satisfies $|e^u(0,t) |   \leq |\partial_v g^u| |e^v(0,s) | + \epsilon_{g^u} \kappa_1\|w(\cdot,t_k)\|_{\infty}$,
%\begin{equation}
%|e^u(0,t) |   \leq |\partial_v g^u| |e^v(0,s) | + \epsilon_{g^u} \kappa_1\|w(\cdot,t_k)\|_{\infty},\label{hlp 4}
%\end{equation}
and (\ref{e^w IC 2}) is bounded by (\ref{error bound w}), i.e., $\|e^{w,k}(x,t_k)\| \leq \epsilon_w \|w(\cdot,t_k)\|_{\infty}$.
%\begin{equation}
%\|e^w(t_k;x,t_k)\| \leq \epsilon_w \|w(\cdot,t_k)\|_{\infty} .
%\end{equation}
%
%If the maximum error is attained in the interior of the domain, (\ref{hlp 1})-(\ref{hlp 2}) imply
%\begin{align}
%\frac{d}{dt}\|e^w(\cdot,t)\|_{\infty} &\leq \gamma_6 e^w_t(\cdot,t)\|_{\infty} +\gamma_7(\epsilon_{\Lambda}+\epsilon_F)\|w(\cdot,t_k)\|_{\infty}.\label{hlp 3}
%\end{align}
Applying the comparison result that
%\begin{equation}\begin{aligned}
%&&|\dot{\alpha}(t)| &\leq a|\alpha(t)| + b, \hspace{.8cm} a\geq 0,\,b\geq 0,\, t\geq 0 \\
% \Rightarrow&& |\alpha(t)|&\leq e^{at}|\alpha(0)|+\frac{b}{a}\left(e^{at}-1\right)\\
% && & \leq e^{at}\left(|\alpha(0)|+\frac{b}{a}\right)
%\end{aligned}
%\end{equation}
$|\dot{\alpha}(t)| \leq a|\alpha(t)| + b$ with $a\geq 0$, $b\geq 0$ and $t\geq 0$, implies
$|\alpha(t)|\leq e^{at}|\alpha(0)|+\frac{b}{a}\left(e^{at}-1\right)\leq e^{at}\left(|\alpha(0)|+\frac{b}{a}\right)$, 
to (\ref{hlp 1})-(\ref{hlp 2}) (with $\alpha(t)$ playing the role of $\|e^w(\cdot,t)\|_{\infty}$, $a=\gamma_6$ and $b=\gamma_7(\epsilon_{\Lambda}+\epsilon_F)\|w(\cdot,t_k)\|_{\infty}$), and keeping in mind that the characteristic lines can have at most one boundary reflection on each determinate set, we obtain the preliminary bound
\begin{equation}\begin{aligned}
\|e^{w,k}(x,t)\| \leq & \gamma_8\,(\epsilon_w+\epsilon_{\Lambda}+\epsilon_F+\epsilon_{g^u})\, \|w(\cdot,t_k)\|_{\infty}\\ 
& \times \exp(\gamma_9\, (t-t_k))  \label{bound preliminary}
\end{aligned}\end{equation}
for some constants $\gamma_8,\,\gamma_9$, on the shared domain of determinacy $(x,t)\in \tilde{\D}_k\cap \bar{\D}_k$.

Next, we investigate the difference between $\tilde{\tau}^v_k(t_k;\cdot)$  and $\bar{\tau}^v(t_k;\cdot)$. Assume without loss of generality that $\tilde{\tau}^v_k(t_k;x)>\bar{\tau}^v(t_k;x)$ for all $x$; if this is not satisfied, the following arguments can be applied by, starting at $x=1$ and moving in the negative $x$-direction, reversing the role of $\tilde{\tau}_k^v$ and $\bar{\tau}^v$ every time the characteristic lines intersect. Omitting the arguments $(t_k;\xi)$ of $\tilde{\tau}^v_k$ in the integrand, we have for all $x\in[0,1]$ 
\begin{equation}\begin{aligned}
&|\tilde{\tau}_k^v(t_k;x)-\bar{\tau}^v(t_k;x)| =\\
& \left|\int_x^1 \frac{1}{\lambda^v(\xi,\tilde{w}^{k}(\xi,\tilde{\tau}_k^v))} - \frac{1}{\lambda^v(\xi,\tilde{w}^{k}(\xi,\bar{\tau}^v))} d\xi \right.\\
& + \int_x^1 \frac{1}{\lambda^v(\xi,\tilde{w}^{k}(\xi,\bar{\tau}^v))} - \frac{1}{\bar{\lambda}^v(\xi,\tilde{w}^{k}(\xi,\bar{\tau}^v))} d\xi \\
&\left. + \int_x^1 \frac{1}{\bar{\lambda}^v(\xi,\tilde{w}^{k}(\xi,\bar{\tau}^v))} - \frac{1}{\bar{\lambda}^v(\xi,w(\xi,\bar{\tau}^v))} d\xi\right|\\
&\leq l_{\Lambda^{-1}}^2 l_{\Lambda}\left(\|\tilde{w}^{k}_t\|_{\infty}\, \|\tilde{\tau}_k^v(t_k;\cdot)-\bar{\tau}^v(t_k;\cdot)\|_{\infty} \right.\\
& \qquad +  \epsilon_{\Lambda}  \kappa_1\|w(\cdot,t_k)\|_{\infty} +  \|e^w(\cdot,\bar{\tau}^v(t_k;\cdot))\|_{\infty} ).
\end{aligned}\end{equation}
Since $l_{\Lambda^{-1}}^2 l_{\Lambda} \|\tilde{w}_t\|_{\infty} \leq l_{\Lambda^{-1}}^2 l_{\Lambda} \kappa_2\bar{\delta}\leq \frac{1}{2} <1$, we can pull the  term with $\|\tilde{\tau}_k^v(t_k;\cdot)-\bar{\tau}^v(t_k;\cdot)\|_{\infty}$ on the right-hand side onto the left-hand side to obtain a bound on $\tilde{\tau}_k^v-\bar{\tau}^v$ depending only on  $e^w(\cdot,\bar{\tau}^v(t_k;\cdot))$ and $\|w(\cdot,t_k)\|_{\infty}$. This gives
\begin{equation}\begin{aligned}
&\|\tilde{w}^{k}(x,\tilde{\tau}_k^v)-w(x,\bar{\tau}^v)\| \\
&\leq \|\tilde{w}^{k}(x,\tilde{\tau}_k^v)-\tilde{w}(x,\bar{\tau}^v)\| + \|e^w(x,\bar{\tau}^v) \|\\
& \leq \|\tilde{w}^{k}_t\|_{\infty}\,|\tilde{\tau}_k^v-\bar{\tau}^v| + \|e^w(x,\bar{\tau}^v) \|_{\infty}\\
&\leq (1+\gamma_{10}) \|e^w(\cdot,\bar{\tau}^v(t_k;\cdot)) \|_{\infty}+\gamma_{10} \epsilon_{\Lambda}\kappa_1\|w(\cdot,t_k)\|_{\infty},\label{bound preliminary 2}
\end{aligned}\end{equation}
where $\gamma_{10}=\kappa_2\bar{\delta}^{\mathrm{max}}\left(1-l_{\Lambda^{-1}}^2 l_{\Lambda}\kappa_2\bar{\delta}^{\mathrm{max}}\right)^{-1}$. Finally, combining (\ref{bound preliminary}) with (\ref{bound preliminary 2}) gives (\ref{bound prediction error}).

\subsection{Proof of Lemma \ref{lemma: input error}} \label{proof appendix 4}
Consider the error between reference and  target trajectories,
\begin{equation}\begin{aligned}
\tilde{e}^{w,k}&(x,t) = \left( \begin{matrix}\tilde{e}^{u,k}(x,t)& \tilde{e}^{v,k}(x,t)\end{matrix} \right)^T \\
 &= \tilde{w}^{*, k}(x,\tilde{\tau}_k^v(t;x))-w^{\mathrm{ref}, k}(x,\bar{\tau}^v(t;x))
\end{aligned}\end{equation}
on the domain $[0,1]\times[t_k,t_{k+1}]$. { Instead of using (\ref{target 1})-(\ref{target 2}), one can show that $\tilde{w}^{*,k}$ also satisfies a PDE-ODE system of the form (see  \cite[Theorem 2]{strecker2017output} for derivation) 
\begin{align}
&\partial_t \tilde{u}^{*,k}= -\mu\,  \partial_x \tilde{u}^{*,k} + \nu\, f^u,&
\partial_x \tilde{v}^{*,k}&= -\frac{f^v}{\lambda^v}, \label{target a1}\\
&\tilde{v}^{*,k}(0,t) = U^{*,k}(\tau^v(t;0)),&
\tilde{u}^{*,k}(0,t) &= g^u(U^{*,k}). \label{target a2}
\end{align}
The terms $\mu$ and $\nu$ contain $\partial_t \tau^v$, and one can show that $|1-\partial_t \tau^v|$ is small due to smallness of $\tilde{w}_t^{*,k}$ and the basic assumptions from Section \ref{sec:assumptions}. Then, the error dynamics  governing $\tilde{e}^{w,k}$ can be formed by subtracting (\ref{target a1})-(\ref{target a2}) from the same set of equations with the actual, uncertain parameters and states. 

}  The initial condition { of $\tilde{e}^{w,k}$} is bounded  by (\ref{bound prediction error}). 
The error in the boundary value at $x=0$ is determined by $|U^{*,k}(\tilde{\tau}_k^v(t;0))-U^{*,\mathrm{ref},k}(\bar{\tau}^v(t;0))|$, $t\in[t_k,t_{k+1}]$, and can therefore  also be bounded by the prediction error $\tilde{e}^{v,k}(0,t_k)$. With this, the preliminary bound
\begin{equation} 
\|\tilde{e}^{w,k}(x,t)\|\leq \tilde{\kappa}_6\,(\epsilon_w+\epsilon_{\Lambda}+\epsilon_F+\epsilon_{g^u})\|w(\cdot,t_k)\|_{\infty} \label{bound input error preliminary}
\end{equation} 
for some $\tilde{\kappa}_6>0$ and all $x\in[0,1],\,t\in [t_k,t_{k+1}]$,   is obtained by following the steps in the proof of Lemma \ref{lemma: prediction error}.

For $t\in[t_k,t_{k+1}]$, the error in the boundary value satisfies
   \begin{equation}\begin{aligned}
   |v(&1,t)-v^{\mathrm{ref}, k}(1,t)|  \\
    \leq&  |v(1,t)-\tilde{v}^{*,k}(1,t)| +    |\tilde{v}^{*,k}(1,t)-v^{\mathrm{ref}, k}(1,t)| \\
     = &|\bar{U}(U(t))-U(t) + \bar{g}^v(u(1,t),t) | +  (\ref{bound input error preliminary})\\
    \leq& \epsilon_U |\tilde{v}^{*, k}(1,t)| + \epsilon_{g^v} |u(1,t)| +  (\ref{bound input error preliminary}), \label{hlp7}
 \end{aligned}  \end{equation}
 where ``(\ref{bound input error preliminary})'' is an abbreviation for the terms on the right-hand side of (\ref{bound input error preliminary}). The first two terms on the right-hand side can be bounded as follows. Like before, the norm of $\tilde{v}$ and $u$ can be bounded based on the norm of the initial and boundary values and the worst-case exponential grow along its characteristic lines: For $\tilde{v}^{*,k}(1,t)$ and $t\in[t_k,t_{k+1}]$,  there exists a  $\tilde{\kappa}_7$ such that
 \begin{equation}
 |\tilde{v}^{*,k}(1,t)| \leq \tilde{\kappa}_7 \Big(  \|\tilde{w}^{*,k}(\cdot,\tilde{\tau}^v_{k}(t_k;\cdot))\|_{\infty}  + \sup_{s\in[\tau_k,\tau_{k+1}]} |U^{*,k}(s)|  \Big) \label{hlp6}
\end{equation}
 where $\|\tilde{w}^{*,k}(\cdot,\tilde{\tau}^v_{k}(t_k;\cdot))\|_{\infty}  \leq \kappa_1 \|\tilde{w}^{k}(\cdot,t_k)\|_{\infty}$, and by design, $\sup_{s\in[\tau_k,\tau_{k+1}]} |U^{*,k}(s)| \leq |\tilde{v}^{k}(0,\tau_k)|$.

 For $u(1,t)$, $t\in[t_k,t_{k+1}]$, we similarly have
  \begin{equation}\begin{aligned}
 |u(1,t)| \leq &\tilde{\kappa}_8 \Big(  \|w(\cdot,t_k)\|_{\infty} + \sup_{s\in[t_k,t_{k+1}]} |v(1,s)|  \Big).
 \end{aligned}\end{equation}
 While it is not possible to find an a priori bound on $ |v(1,\cdot)|$, it can be bounded via
  \begin{equation}
 |v(1,t)| \leq  \, |v^{\mathrm{ref}, k}(1,t)|  + |v(1,t)-v^{\mathrm{ref}, k}(1,t)|
 \end{equation}
 and
\begin{equation}
 |v^{\mathrm{ref}, k}(1,t)| \leq \,\tilde{\kappa}_9 \Big(\|w(\cdot,t_k)\|_{\infty}  +\sup_{s\in[\bar{\tau}_k,\bar{\tau}_{k+1}]} |U^{*,ref}(t_k;s)|   \Big),\label{hlp8}
\end{equation} 
 where $|U^{*,ref}(t_k;s)|$ can again be bounded via $\|w(\cdot,t_k)\|_{\infty}$. In summary, inserting (\ref{hlp6})-(\ref{hlp8}) into (\ref{hlp7}) gives an estimate of the form
  \begin{equation}\begin{aligned}
   |v(1,t)-v^{\mathrm{ref}, k}(1,t)|  \leq  & \, \epsilon_{g^v} \tilde{\kappa}_{10} |v(1,t)-v^{\mathrm{ref}, k}(1,t)| \\
   &~ + \tilde{\kappa}_{11} \,\epsilon_{\mathrm{sum}}\, \|w(\cdot,t_k)\|_{\infty} \label{hlp9}.
 \end{aligned}  \end{equation}
Finally, (\ref{bound input error})   is  obtained  since $\epsilon_{g^v} \tilde{\kappa}_{10} < 1$.

% or
%\appendix  % for no appendix heading
% do not use \section anymore after \appendix, only \section*
% is possibly needed

% use appendices with more than one appendix
% then use \section to start each appendix
% you must declare a \section before using any
% \subsection or using \label (\appendices by itself
% starts a section numbered zero.)
%

%
%\appendices
%\section{appendix}
%
%
%% use section* for acknowledgment
%\section*{Acknowledgment}
%
%
%The authors would like to thank...

% Can use something like this to put references on a page
% by themselves when using endfloat and the captionsoff option.
\ifCLASSOPTIONcaptionsoff
  \newpage
\fi

% trigger a \newpage just before the given reference
% number - used to balance the columns on the last page
% adjust value as needed - may need to be readjusted if
% the document is modified later
%\IEEEtriggeratref{8}
% The "triggered" command can be changed if desired:
%\IEEEtriggercmd{\enlargethispage{-5in}}

% references section

% can use a bibliography generated by BibTeX as a .bbl file
% BibTeX documentation can be easily obtained at:
% http://mirror.ctan.org/biblio/bibtex/contrib/doc/
% The IEEEtran BibTeX style support page is at:
% http://www.michaelshell.org/tex/ieeetran/bibtex/
%\bibliographystyle{IEEEtran}
% argument is your BibTeX string definitions and bibliography database(s)
%\bibliography{IEEEabrv,../bib/paper}
%
% <OR> manually copy in the resultant .bbl file
% set second argument of \begin to the number of references
% (used to reserve space for the reference number labels box)
\bibliographystyle{IEEEtran}
\bibliography{IEEEabrv,references} 
% biography section
% 

% that's all folks
\end{document}